\newtheorem{Theorem}{Theorem}[section]
\newtheorem{Lemma}[Theorem]{Lemma}
\newtheorem{Proposition}[Theorem]{Proposition}
\newtheorem{Corollary}[Theorem]{Corollary}
\theoremstyle{definition}
\newtheorem{Definition}{Definition}
\theoremstyle{remark}
\newtheorem{Example}{Example}
\newtheorem{Remark}[Theorem]{Remark} 
\numberwithin{equation}{section}
\newcommand{\R}{\mathbb R}
\newcommand{\C}{\mathbb C}
\newcommand{\D}{\mathbb D}
\newcommand{\St}{\mathbb S}
\newcommand{\U}{{\rm U}_{2,1}}
\newcommand{\SOt}{{\rm SO}_{2,1}}
\newcommand{\SU}{{\rm SU}_{2, 1}}
\newcommand{\stabU}{{\rm U}_{1, 1} \times{\rm U}_1 }
\newcommand{\Uoneone}{{\rm U}_{1, 1}}
\newcommand{\SUoneone}{{\rm SU}_{1, 1}}
\newcommand{\id}{\operatorname{id}}
\newcommand{\Gr}{\operatorname{Gr}}
\newcommand{\LGr}{\operatorname{L_{Gr}}}
\newcommand{\SLGr}{\operatorname{SL_{Gr}}}
\newcommand{\SL}{{\rm SL}_3 \mathbb C}
\renewcommand{\sl}{\mathfrak{sl}_3 \mathbb C}
\newcommand{\lsl}{\Lambda \mathfrak{sl}_3 \mathbb C_{\sigma}}
\newcommand{\LSL}{\Lambda {\rm SL}_3 \mathbb C_{\sigma}}
\newcommand{\ad}{\operatorname{Ad}}
\newcommand{\di}{\operatorname{diag}}
\newcommand{\tr}{\operatorname{Tr}}
\renewcommand{\Re}{\operatorname {Re}}
\renewcommand{\Im}{\operatorname {Im}}
\newcommand{\CH}{\mathbb {CH}^{2}_1}
\newcommand{\CP}{\mathbb {CP}^{2}_1}
\newcommand{\f}{\mathfrak f}
\renewcommand{\d}{\mathrm d}
\renewcommand{\r}[1]{ #1_0}
\renewcommand{\l}{\lambda}
\begin{document}
\title{Timelike minimal Lagrangian surfaces in the indefinite 
 complex hyperbolic two-space}
 \author[J. F.~Dorfmeister]{Josef F. Dorfmeister}
 \address{Fakult\"at f\"ur Mathematik, 
 TU-M\"unchen, 
 Boltzmann str. 3,
 D-85747, 
 Garching, 
 Germany}
 \email{dorfm@ma.tum.de}
 \author[S.-P.~Kobayashi]{Shimpei Kobayashi}
 \address{Department of Mathematics, Hokkaido University, 
 Sapporo, 060-0810, Japan}
 \email{shimpei@math.sci.hokudai.ac.jp}
 \thanks{The second named author is partially supported by JSPS 
 KAKENHI Grant Number JP18K03265 and 
 Deutsche Forschungsgemeinschaft-Collaborative Research Center, TRR 109, ``Discretization in Geometry and Dynamics''.}
 \subjclass[2010]{Primary~53A10, 53B30, 58D10, Secondary~53C42}
 \keywords{Timelike minimal Lagrangian surfaces; Loop groups; Real forms; Tzitz\'eica equations }
 \date{\today}
\pagestyle{plain}
\begin{abstract}
 It has been known for some time that there exist $5$ essentially different
 real forms of the complex affine Kac-Moody  algebra of type $A_2^{(2)}$ 
 and that one can associate $4$ of these real forms with certain classes 
 of ``integrable surfaces'', such as
  minimal Lagrangian surfaces in $\mathbb {CP}^2$ and 
 $\mathbb {CH}^2$, as well as definite and indefinite 
 affine spheres in $\R^3$.
 
 In this paper we consider the  class of 
 timelike minimal Lagrangian surfaces in the indefinite complex 
 hyperbolic two-space $\CH$. We show that this class of surfaces corresponds
 to the fifth real form.

 Moreover, for each  timelike Lagrangian surface in $\CH$ 
 we define  natural Gauss maps into certain homogeneous spaces and 
 prove a Ruh-Vilms type theorem, characterizing  timelike minimal 
 Lagrangian surfaces among all timelike Lagrangian surfaces 
 in terms of the harmonicity of these Gauss maps.
\end{abstract}
\maketitle
%%%%%%    TEXT START    %%%%%%%%
\section*{Introduction}
 It became more and more clear in recent years that many surface classes are characterized by harmonic maps into some $k$-symmetric space.
 In the classical case Ruh-Vilms \cite{RV} 
 have characterized all constant mean curvature surfaces
 in $\R^3$ among all surfaces as those for which the (classical) Gauss map into the symmetric space $\St^2 = \textrm{SO}_3/\textrm{SO}_2$ 
 is harmonic.
 Another case consists of all constant mean curvature surfaces in the real hyperbolic 
 space $\mathbb{H}^3$, which are those surfaces in $\mathbb{H}^3$ for which the 
 ``normal Gauss map'' into the unit tangent bundle of $\mathbb{H}^3$, considered as a $4$-symmetric space, is harmonic, \cite{DIK}.
 Another group of surfaces with an analogous characterization seem to be the definite and the indefinite affine spheres, and the minimal Lagrangian surfaces 
 in $\mathbb{CP}^2$  and in $\C \mathbb{H}^2$, see for examples, 
 \cite{DE, DW1, DM, ML}. 
 (So far only in \cite{McIn} a harmonic ``Gauss map'' is given explicitly.)

 All these examples come in  $S^1$-families of surfaces of the same class and can be investigated by using the loop group technique. Here one observes that the naturally associated moving frames of an associated family are contained in a specific loop group.
 In \cite{DE} it was observed that the indefinite affine spheres in $\R^3$ are associated with
 a real form of the affine Kac-Moody algebra of type $A_2^{(2)}$. Later it was observed that the definite affine spheres (of elliptic type or of hyperbolic type) also are associated with a real form of type $A_2^{(2)}$, as well as the minimal Lagrangian immersions into $\mathbb{CP}^2$ and the 
 the minimal Lagrangian immersions into $\mathbb{CH}^2$. In view of the classification of 
 all real forms of the complex affine Kac-Moody algebra of type $A_2^{(2)}$ by Heintze-Gro{\ss} \cite{HG} or Rousseau et al. \cite{B3R, BMR} it became clear, that the surface types mentioned above correspond exactly to four of the five types of inequivalent real forms, see \cite{DFKW}. 
 For the case of the complex affine Kac-Moody algebra of type 
$A_1^{(1)}$ it has been shown in \cite{Ko;real} that the real forms of this Kac-Moody algebra are related to constant mean curvature/constant Gaussian curvature 
surfaces in the Euclidean $3$-space, the Minkowski $3$-space
 or in the hyperbolic $3$-space.

 In this paper we present the ``missing case''. More precisely, we define 
 \emph{timelike} minimal Lagrangian surfaces in the indefinite complex hyperbolic space which are associated with the missing real form, and also define a Gauss map for all Lagrangian surfaces in the indefinite complex hyperbolic space. These Gauss maps take values in a 
\textit{quasi} $6$-symmetric space (see Definition \ref{def:k-symmetric})
 and are Lorentz primitive harmonic if and only if the corresponding Lagrangian surfaces in the indefinite complex hyperbolic space are minimal.
 We note that in \cite{DIT, IT}, the loop group methods for 
 timelike constant mean 
 curvature surfaces and timelike minimal surfaces in the Minkowski $3$-space 
$\R^3_1$ have been developed, and these surfaces
 correspond to a real form of the complex 
 affine Kac-Moody algebra of type $A_1^{(1)}$.

This result permits to apply the loop group technique which represents a general procedure to construct all surfaces of the associated class; in our case all minimal timelike Lagrangian surfaces in the indefinite complex hyperbolic space.
More on this is left to a separate investigation.

%%%%%%%%%%%%%%%%%%%%%%%%%%%%%%%
\section{Timelike minimal Lagrangian surfaces in $\CH$}
 In this section, we define  timelike Lagrangian surfaces in $\CH$ and
 discuss their basic properties.
 In particular 
 we characterize minimality of a timelike Lagrangian surface 
 by the vanishing of the so-called ``mean curvature'' 
 $1$-form, Proposition \ref{prp:minicharact}.
\subsection{Surfaces in $\CH$}
 Let
\begin{equation}\label{eq:P}
 \r{P} = 
\begin{pmatrix}
 0 & 1 & 0 \\ 
 1 & 0 & 0 \\ 
 0 & 0 & -1 
\end{pmatrix},
\end{equation}
 and consider the three-dimensional complex Hermitian flat space $\C^3_2$, that is, $\C^3$ together with the pseudo-Hermitian form of signature 
 $(2,1)$
\begin{equation} \label{Hermitian}
 \langle z, w\rangle = z^T \r{P} \,  \!\bar w = 
 z_1 \overline{w_2} + z_2 \overline{w_1} -z_3 \overline{w_3}.
\end{equation}
 Vectors $v\in \C^3_2$ satisfying $\langle v,v \rangle < 0$ or $\langle v,v \rangle > 0$ will be called ``negative'' and ``positive'' respectively. Clearly, the set of these vectors is open in 
 $\C^3_2$ and $\C^{\times}$ acts freely on these sets by multiplication.
\begin{Definition}
 The real part and the imaginary part of the indefinite Hermitian 
 inner product of $\C^{3}_2$ define a pseudo-Riemannian metric $g$ 
 and a symplectic form $\Omega$, respectively:
 \begin{equation} \label{hermitianmetricphi}
  \langle \>,\> \rangle  = \Re\langle \>,\> \rangle + \sqrt{-1} 
 \Im\langle \>,\> \rangle = g (\>,\> ) + \sqrt{-1} \Omega (\>,\> ). 
 \end{equation}
\end{Definition}
 Then the indefinite complex hyperbolic space $\CH$, see \cite[Section 2]{DF}, defined by
 \begin{equation}
 \CH = \{ \mathbb C^{\times} v \;|\; v \in \C^3_2, \; \langle v,v \rangle < 0\}
 \end{equation}
 is a two-dimensional complex manifold.
 Let
 \[
 \U = \left\{A \;\Big|\; \begin{array}{l}
 \mbox{Invertible real linear map  in $\C^3_2$ 
 }  \\
 \mbox{satisfying  $\langle Av, Aw \rangle = \langle v, w\rangle$ 
 for all $v,w \in \C^3_2$.}
 \end{array}
 \right\}.
  \]
  Then $\C^{\times }\cdot \U$ is a connected reductive Lie group which acts transitively on the set of negative (resp. positive) 
 vectors. 
 As a consequence, $\U$ acts transitively on $\CH$ and it is easy 
 to verify that the stabilizer  in $\U$ of the negative vector 
 $e_3 = (0,0,1)^T$ is given by the
 diagonal block form matrix group  $\stabU$ in $\U$, where  
 $\Uoneone$, is the group of isometries of  
 the indefinite Hermitian metric  of $\C^2_1$ given by  
 $(z,w) = z_1 \overline{w_2} + z_2 \overline{w_1}$.
  
  As a consequence, $\CH$ can be 
 represented as the indefinite Hermitian symmetric space, see for example  \cite[Section 2]{Shapiro}:
 \begin{equation}
 \CH = \U/\stabU.
 \end{equation}
 The complex  manifold $\CH$ carries naturally the pseudo-Hermitian metric induced from $\C^3_2$. The projection is a pseudo-Riemannian submersion.
\begin{Remark}
 The indefinite complex hyperbolic space $\CH$ is known to be 
 anti-isometric (the metrics differ by a  minus sign) to the complex de Sitter space $\CP$ of
 all positive lines of complex Hermitian flat space $\C^3_1$ 
 with signature $(1,2)$, see \cite[p. 96]{A}.
\end{Remark}
 Let $H^5_3$ be the anti-de Sitter sphere 
 (note again that the signature of $\C^3_2$ is $(2, 1)$):
\[
 H^5_{3} = \left\{ v \in \C^3_2 \;|\;  \langle v, v\rangle = -1\right\}.
\]
 Then there exists the Boothby-Wang type fibration 
 $\pi : H^5_3 \to \CH$ given by $v \mapsto 
 %{\rm span}_{\C^{\times}} \{v\} = 
 \C^{\times} v$, \cite{BW, DF}.
 The tangent space of $H^5_3$ at $p \in H^5_3$ is
\begin{equation*}
 T_p H^5_3  = \{ w \in \C^3_2 \;|\;\Re \langle w,p \rangle =0\}.
\end{equation*}
 Moreover, the space 
 \begin{equation*}
 \mathcal{H}_p = \{ w \in T_p H^5_3\;|\;  \langle w,p \rangle =0\}
 \end{equation*}
 is a natural horizontal subspace.
 Recall that the projection $\pi$ from $H^5_3$  
 to $\CH$ is a pseudo-Riemannian submersion.
 Moreover, note that the form 
\[
\zeta (p) = \Im \langle p, \cdot  \rangle			      
\] 
 is a contact form and  $H^5_{3}$ is a contact manifold.
 Note also that $H^5_3$ can be represented as the symmetric space 
 \[
  H^5_3 = \U/\Uoneone,
 \]
 where $\Uoneone$ here more precisely means the block form matrix group 
 $\Uoneone \times \{1\}$.

 Since $\pi$ is a 
 pseudo-Riemannian submersion, we will make use of the pseudo-Riemannian metric
 $g$ and the symplectic 
 form $\Omega$ on $\CH$ which is given by 
\begin{equation}
g(a, b ) = \Re \langle \tilde a, \tilde b \rangle,  \quad 
\Omega(a,b) =  \Im \langle \tilde a, \tilde b \rangle, 
\end{equation}
 where $a, b \in T_p \CH$  and $\tilde a, \tilde b \in T_{\tilde p}H^5_3$ 
 are the vectors in the horizontal subspace 
 $\mathcal{H}_{\tilde p} \subset T_{\tilde p} H^5_3$ 
 corresponding uniquely to $a$ and $b$ respectively via $\pi.$
\begin{Lemma}
 Let $\D$ be a simply connected domain in $\R^2$ and 
 $f: \D \to \CH$ a Lagrangian map, $($thus satisfying $\Omega(\d f,\d f)=0$$)$.
 Then there exists a lift $\f : \D \to H^5_3$ such that 
\begin{equation}\label{eq:horizontal}
 \langle \d \f, \f \rangle = 0.
\end{equation}
 This lift is unique up to a constant factor from $S^1$.
 A lift $\f$ of a Lagrangian map $f$ 
 with the condition \eqref{eq:horizontal} as above
 will be called a {\rm horizontal lift}.
\end{Lemma}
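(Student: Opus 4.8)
The plan is to begin with an \emph{arbitrary} smooth lift of $f$ and then correct it by an $S^1$-valued gauge so as to achieve the horizontality condition \eqref{eq:horizontal}. The fibre of the Boothby--Wang fibration $\pi\colon H^5_3\to\CH$ is the circle $S^1$ acting by scalar multiplication: if $w=\lambda v$ with $v,w\in H^5_3$, then $-1=\langle w,w\rangle=|\lambda|^2\langle v,v\rangle=-|\lambda|^2$ forces $|\lambda|=1$. Thus $\pi$ is a principal $S^1$-bundle, and since $\D$ is a simply connected planar domain (hence contractible) the pulled-back bundle $f^{\ast}\pi$ over $\D$ is trivial and admits a global section; this gives a smooth lift $F\colon\D\to H^5_3$ with $\pi\circ F=f$. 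Every other lift has the form $\f=e^{\sqrt{-1}\,\theta}F$ for a real function $\theta$, so the task reduces to choosing $\theta$ appropriately.

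First I would record the identity obtained by differentiating $\langle F,F\rangle\equiv -1$, namely $\langle\d F,F\rangle+\overline{\langle\d F,F\rangle}=0$; hence $\Re\langle\d F,F\rangle=0$ and $\alpha:=-\sqrt{-1}\,\langle\d F,F\rangle$ is a \emph{real} $1$-form. Substituting $\f=e^{\sqrt{-1}\theta}F$ into the pairing and using $\langle F,F\rangle=-1$ gives $\langle\d\f,\f\rangle=-\sqrt{-1}\,\d\theta+\langle\d F,F\rangle$, so the horizontality \eqref{eq:horizontal} is equivalent to the single scalar equation $\d\theta=\alpha$. On the simply connected domain $\D$ such a $\theta$ exists precisely when $\alpha$ is closed, so the whole statement reduces to proving $\d\alpha=0$.

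The crux is $\d\alpha=0$, and this is exactly where the Lagrangian hypothesis enters. Writing $F_x,F_y$ for the coordinate derivatives on $\D$, a short computation using $F_{xy}=F_{yx}$ and the Hermitian symmetry $\langle F_y,F_x\rangle=\overline{\langle F_x,F_y\rangle}$ yields $\d\alpha=-2\,\Im\langle F_x,F_y\rangle\,\d x\wedge\d y$. Now decompose $F_x,F_y$ into horizontal and vertical parts along the fibre, the vertical direction being spanned by $\sqrt{-1}\,F$; since $\langle\,\cdot\,,F\rangle$ annihilates horizontal vectors and $\langle\sqrt{-1}F,\sqrt{-1}F\rangle=-1$ is real, every mixed and vertical contribution to $\Im\langle F_x,F_y\rangle$ cancels, leaving $\Im\langle F_x,F_y\rangle=\Im\langle\widetilde{f_x},\widetilde{f_y}\rangle=\Omega(f_x,f_y)$. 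The Lagrangian condition $\Omega(\d f,\d f)=0$ therefore forces $\d\alpha=0$, and integrating $\d\theta=\alpha$ produces the horizontal lift $\f=e^{\sqrt{-1}\theta}F$.

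For uniqueness, any two horizontal lifts differ by $\f_2=e^{\sqrt{-1}\psi}\f_1$ with $\psi$ real, and the identity above gives $0=\langle\d\f_2,\f_2\rangle=-\sqrt{-1}\,\d\psi+\langle\d\f_1,\f_1\rangle=-\sqrt{-1}\,\d\psi$, so $\psi$ is constant, i.e.\ the lift is unique up to a constant factor in $S^1$. I expect the main obstacle to be the verification in the third paragraph that the vertical components drop out of $\Im\langle F_x,F_y\rangle$, so that the integrability obstruction for $\alpha$ is genuinely the pulled-back symplectic form $\Omega(\d f,\d f)$ and nothing else; the remaining steps are routine bookkeeping with the indefinite Hermitian form.
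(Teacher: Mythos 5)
Your proposal is correct and follows essentially the same route as the paper: start from an arbitrary lift, observe that $\langle\d F,F\rangle$ is purely imaginary, use the Lagrangian hypothesis to show the resulting real $1$-form is closed, and integrate it on the simply connected domain to produce the $S^1$-valued correction. Your version merely spells out in more detail the two points the paper leaves implicit (that the closedness obstruction is exactly the pulled-back symplectic form, with the vertical components cancelling, and the uniqueness statement), so there is nothing to flag.
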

\begin{proof}
 Let $\hat \f : \D \to H^5_3$ be a lift of $f$. Then $\langle \d \hat \f,  
 \hat \f\rangle$ + $\langle \hat \f,  
 \d \hat \f\rangle = 0$, that is, $\langle \d \hat \f,  
 \hat \f\rangle$ takes purely imaginary values. Moreover, 
 the Lagrangian condition for $\f$ means that 
 $\langle \d \hat \f,  \hat \f\rangle$ is a closed $1$-form.
 Since $\D$ is a simply connected domain in $\R^2$, the form $ \langle \d\hat \f, \hat \f \rangle$ is exact. Hence  there exists 
 a real function $\eta : \D \to \R$ such that $\sqrt{-1} \d \eta = \langle \d \hat \f, \hat \f \rangle$. Then we put  $\f =e^{\sqrt{-1} \eta} \hat \f$ and 
 $\langle \d \f, \f\rangle =0$ follows.
\end{proof}

\begin{Remark}
 A  horizontal lift $\f :\D \rightarrow H^5_3$  of $f$ is sometimes called 
 a  \textit{Legendre lift} of $f :\D \rightarrow \CH,$
 since for a horizontal lift of a Lagrangian immersion
 the condition \eqref{eq:horizontal} is equivalent with 
 $\zeta (\f(q)) (\d\f(q)) = 0$, equivalently  $\Im \langle \d \f, \f\rangle=0$, 
 and this means that $\f$ is 
 a Legendre immersion into the contact manifold $H^5_3$.
 For a more general discussion  of  the notion a Legendre lift see Section 
 \ref{sc:Lift}. 
\end{Remark}
 Let $f: M \to \CH$  be a Lagrangian immersion from a two-dimensional manifold $M$.
 Then $f$  induces a pseudo-Riemannian metric on $M$.
 If we restrict the immersion $f$ to any contractible open subset $\D$ of $M$, 
 then the induced metric of $f$ is represented, on $\D$,  by using the horizontal 
 lift $\f$,  as 
 \begin{equation} \label{metricf}
  \d s^2  =  \Re \langle \d\f, \d\f \rangle = g(\d f, \d f).
 \end{equation}
Note, the second equality above comes from the fact that 
 two horizontal lifts of $f$ 
 only differ by a constant scalar factor from $S^1$.

 In what  follows we will consider exclusively  timelike  surfaces. 
 Hence  the induced metric $\d s^2$ is assumed to be
 indefinite. Moreover, we always assume that all surfaces are Lagrangian.
 
\begin{Remark}
 For a Lagrangian immersion $f$ in $\CH$, 
 we obtain for the complex structure $J$ of $\CH$ 
 the identity $g(J\circ \d f, J \circ \d f ) = g(\d f, \d f) $.
 The definition of a Lagrangian surface implies that  
 $J \circ \d f$ is perpendicular to $\d f$ and a timelike vector.
 As a consequence, $g(\d f, \d f)$ is not  spacelike. Hence
 we have the following corollary.
 \begin{Corollary}
 There does not exist any spacelike Lagrangian surface in $\CH$.
 \end{Corollary}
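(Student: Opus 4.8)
The plan is to argue by contradiction: I will show that if the induced metric were positive definite at some point, then all of $T_p\CH$ would be positive definite, contradicting its known indefinite signature. The two structural inputs are already recorded in the preceding Remark, namely that the complex structure $J$ is a $g$-isometry, $g(J\circ\d f, J\circ\d f)=g(\d f,\d f)$, and that the Lagrangian condition forces $J\circ \d f$ to be $g$-orthogonal to $\d f$. The one additional ingredient I need is the signature of $g$ on $T_p\CH$, which I would extract from the construction of the metric itself.

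First I would fix $q$ in the domain, put $p=f(q)$, and set $V=\d f(T_qM)\subset T_p\CH$, a two-dimensional real subspace, so that the induced metric at $q$ is $g|_V$. Under the identification $T_p\CH\cong\mathcal H_{\tilde p}$ with $g=\Re\langle\cdot,\cdot\rangle$, the horizontal space is the $\langle\cdot,\cdot\rangle$-orthogonal complement $\tilde p^{\perp}$ of the negative vector $\tilde p\in H^5_3$. Since $\langle\cdot,\cdot\rangle$ has one positive and two negative directions and $\tilde p$ is negative, the restriction to $\tilde p^{\perp}$ has one positive and one negative Hermitian direction, whence its real part $g$ on $T_p\CH$ has signature $(2,2)$; in particular $g$ is not positive definite on $T_p\CH$. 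Now suppose, for contradiction, that $f$ is spacelike, i.e. $g|_V$ is positive definite at $p$. Then $g|_V$ is nondegenerate, so its radical is trivial. For any $w\in V\cap JV$ we have $w\in JV$, and since $JV\perp_g V$ (the perpendicularity recorded in the Remark) we get $g(w,v)=0$ for every $v\in V$; thus $w$ lies in the radical of $g|_V$ and hence $w=0$. Therefore $V\cap JV=\{0\}$, and because $\dim V=\dim JV=2$ we obtain the $g$-orthogonal decomposition $T_p\CH=V\oplus JV$. As $J\colon V\to JV$ is a $g$-isometry, $g|_{JV}$ is positive definite as well, so $g$ would be positive definite on all of $V\oplus JV=T_p\CH$, contradicting its signature $(2,2)$. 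Hence no point can be spacelike, which proves the Corollary.

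The only step that genuinely requires care is the signature bookkeeping, i.e.\ confirming that $\tilde p^{\perp}$ inherits Hermitian signature $(1,1)$ and therefore that $g$ on $T_p\CH$ is $(2,2)$; everything else is elementary linear algebra, and I anticipate no real obstacle. It is worth noting that the contradiction form of the argument is the correct one here, rather than a claim that the induced metric is Lorentzian everywhere: a \emph{null} complex line is itself Lagrangian, so a Lagrangian tangent plane may be degenerate. The sharp statement one can prove is exactly that the induced metric is never positive definite, which is precisely what the Corollary asserts.
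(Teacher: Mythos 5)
Your argument is correct and is essentially the paper's own: the paper establishes the Corollary in the preceding Remark by exactly this combination of facts --- $J$ is a $g$-isometry, the Lagrangian condition makes $J\circ \d f$ perpendicular to $\d f$, and the signature of $g$ on $T_p\CH$ is $(2,2)$ --- so a spacelike tangent plane $V$ would force the $g$-orthogonal, equally spacelike plane $JV$ to make $V\oplus JV=T_p\CH$ positive definite, a contradiction. You have merely written out the signature bookkeeping and the $V\cap JV=\{0\}$ step that the paper leaves implicit.
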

\end{Remark}

%%%%%%%%%%%%%%%%%%%%%%

\subsection{Moving frame}
 In this subsection we discuss moving frames of timelike Lagrangian surfaces in $\CH$. 
 First we discuss null coordinates, 
 that is, for an indefinite metric $g = \sum_{i, j} g_{ij}\, 
 \d x_i \d x_j$ 
 on a surface $M$, such that 
 $g_{11} = g_{22}=0$ and $g_{12} = g_{21} \neq 0$.
 The existence of null coordinates can be found for example in 
 \cite{TWeinstein} or \cite[Prop 14.1.18 and Remark 14.1.19]{B-Wood}. In our case, this result is formulated as follows:
\begin{Theorem}
 Let $f : \D \rightarrow \CH$ be a timelike Lagrangian immersion and 
 $\f$ a horizontal lift of $\f$. Then the metric  \eqref{metricf} 
 induced  by $\f$ $($and $f)$ on $\D$ is Lorentzian.
 In particular, in a neighbourhood of any point of $\D$
 {\it null coordinates}  exist for $\f$ $($and $f)$.
\end{Theorem}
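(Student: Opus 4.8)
The plan is to establish the Lorentzian signature directly from the Hermitian geometry of the horizontal lift and then to read off null coordinates from the cited local normal form. First I would pass to the horizontal lift $\f$, for which the induced metric is the real quadratic form $\Re\langle \d\f,\d\f\rangle$ of \eqref{metricf}. Since $\CH$ is a complex manifold and $\f$ satisfies $\langle \d\f,\f\rangle=0$, multiplication by $\sqrt{-1}$ again preserves horizontality and realizes the complex structure $J$ on horizontal vectors; writing $V=\d f(T_pM)$ for the tangent plane, its lift is spanned over $\R$ by $\f_x,\f_y$, while $JV$ is lifted by $\sqrt{-1}\,\f_x,\sqrt{-1}\,\f_y$. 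The Lagrangian condition $\Im\langle \d\f,\d\f\rangle=0$ makes the Gram matrix $(\langle \f_i,\f_j\rangle)$ real and symmetric, so $\d s^2$ is exactly this real quadratic form. Moreover, as recorded in the Remark, $J\circ \d f$ is $g$-orthogonal to $\d f$, so $V$ and $JV$ are $g$-orthogonal, and $J$ is a $g$-isometry (because $\langle \sqrt{-1}\,w,\sqrt{-1}\,w'\rangle=\langle w,w'\rangle$), so $g|_{JV}$ is isometric to $g|_V$.

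The key step is to show that $V\oplus JV$ fills the horizontal space $\mathcal{H}_{\f}$, equivalently that $V\cap JV=0$. Since $V+JV$ is $J$-invariant it is a complex subspace, of real dimension $2$ or $4$; the dimension is $2$ precisely when $V=JV$ is a complex line, i.e.\ when $\f_y\in\C\,\f_x$. In that degenerate case the Lagrangian condition forces $\langle \f_x,\f_x\rangle=0$ and hence $g|_V\equiv 0$, so the tangent plane would be totally null — which is impossible for a timelike surface, whose induced metric is indefinite (in particular not identically zero) and whose lift $\f_x,\f_y$ are independent because $f$ is an immersion. Thus $V\cap JV=0$ and $\mathcal{H}_{\f}=V\oplus JV$ orthogonally. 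Now $\f$ is a negative vector, so $\mathcal{H}_{\f}=\f^{\perp}$ inherits from the signature $(2,1)$ Hermitian form of $\C^3_2$ a nondegenerate metric of real signature $(2,2)$. Decomposing this metric along the orthogonal sum of the two $J$-isometric halves $V$ and $JV$, its signature is twice that of $g|_V$; hence $g|_V$ has signature $(1,1)$ and $\d s^2$ is Lorentzian.

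Finally, once $\d s^2$ is known to be Lorentzian on $\D$, the existence of null coordinates near any point is precisely the local normal-form result for two-dimensional Lorentzian metrics, which I would simply invoke from \cite{TWeinstein} and \cite[Prop.~14.1.18 and Rem.~14.1.19]{B-Wood}. The main obstacle is the nondegeneracy step $V\cap JV=0$: this is the only place where the immersion hypothesis (independence of $\f_x,\f_y$) and the timelike hypothesis (the tangent plane is not a null complex line) are genuinely combined. Granting this, the signature count produces the Lorentzian conclusion and the cited theorem then yields the null coordinates.
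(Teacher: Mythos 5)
Your argument is correct, and it is worth noting that the paper does not actually supply a proof of this theorem: it is stated as the specialization of the cited normal-form results (\cite{TWeinstein}, \cite[Prop.~14.1.18 and Remark~14.1.19]{B-Wood}) to the situation at hand, with ``timelike'' taken to mean that the induced metric is indefinite --- which in dimension two already forces signature $(1,1)$ --- so the only real content in the paper's version is the citation for null coordinates. What you add is a genuine derivation of the Lorentzian signature from the ambient geometry: the orthogonal splitting $\mathcal{H}_{\f}=V\oplus JV$ of the signature-$(2,2)$ horizontal space into two $J$-isometric halves, together with the exclusion of the degenerate case $V=JV$ (where the Lagrangian condition forces $g|_V\equiv 0$). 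This is essentially the computation that the paper only sketches in the subsequent Remark and uses there merely to conclude that no spacelike Lagrangian surfaces exist; you push the same idea to a full signature count, which buys a proof that any Lagrangian immersion whose tangent plane is not a null complex line is automatically Lorentzian, rather than assuming indefiniteness as part of the definition. One small caution: your elimination of the totally null case leans on the hypothesis that a timelike surface has indefinite (hence not identically vanishing) induced metric, so if ``timelike'' were read as ``nondegenerate of signature $(1,1)$'' this portion becomes circular, and if it were read as ``not definite'' your argument is exactly what closes the gap; you should state which convention you are using. The final step, invoking the local normal form for two-dimensional Lorentzian metrics, coincides with the paper's.
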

 For a   horizontal lift $\f$ of a timelike Lagrangian immersion 
 $f$ we thus have:
 \begin{equation}\label{eq:nullf}
 \Re{\langle \f_u, \f_u\rangle}  = 
 \Re{\langle \f_v, \f_v\rangle}  = 0, \quad \mbox{and} \quad
 \mbox{$\Re{\langle \f_u, \f_v\rangle}$ never vanishes.}
\end{equation}
 From the horizontality $\langle \d \f, \f\rangle=0$, we have 
$\langle \f_u, \f \rangle= \langle \f_v, \f \rangle = 0$. 
 Moreover, taking the derivative with 
 respect to $v$ and $u$, respectively, we obtain  
 $\langle \f_u, \f_v \rangle
 =
 \langle \f_v, \f_u \rangle$. Hence,  $\langle \f_u, \f_v \rangle$ is real and never vanishes. Assuming without loss of generality 
 $\langle \f_v, \f_u \rangle >0$
  we finally obtain:
 \begin{equation}\label{eq:null}
 {\langle \f_u, \f_u\rangle}  = 
 {\langle \f_v, \f_v\rangle}  = 0, \quad \mbox{and} \quad
 \mbox{${\langle \f_u, \f_v\rangle} = \langle \f_v, \f_u \rangle$ is always positive,}
\end{equation}
% Let us take the derivative of the horizontal condition 
% $\langle \f_u, \f\rangle = 0 $  and $\langle \f_v , 
% \f \rangle = 0$  relative to  $v$ and $u$, 
% respectively. Then it is easy to see that 
% $\langle \f_u, \f_v \rangle  = \langle \f_v, \f_u \rangle$ holds, whence 
and
\[
  \Im \langle \f_u, \f_v \rangle  = \Omega (f_u, f_v) =0.
\]
 Moreover, we have just seen that $\langle \f_u, \f_v \rangle$ is always
 positive. Therefore, we can assume that there exists a
 real function 
 $\omega : \D \to \R$ such that 
 \[
  \langle \f_u, \f_v \rangle = e^{\omega} \quad \mbox{and}
\quad \d s^2 = 2 e^{\omega} \d u \d v
 \]
 holds. Then we consider the  \textit{coordinate frame}
 \begin{equation}\label{eq:coordinateframe}
 \mathcal F = \left( e^{-\omega/2}\f_u, e^{-\omega/2} \f_v, \f\right).
\end{equation}
 It is straightforward to see that $\mathcal F$ takes values in 
 $\U$, that is, 
\begin{equation}\label{eq:SU21}
 \r{P} \mathcal F^T \r{P}\, \bar {\mathcal F}  = 
 \id, \quad
 \mbox{equivalently \; $\mathcal F^{-1} = \r{P} \bar{\mathcal F}^T \r{P}$}
\end{equation} 
 holds, where $\r{P}$ is defined in \eqref{eq:P}. 
 Then $|\det \mathcal{F}|^2 = 1$ and 
 $\mathcal{F} \in \U = S^1 \cdot \SU $ follows.
  
 We now want to compute the Maurer-Cartan form of $\mathcal{F}$. For this we will use the mean curvature vector of $f$. 
 
 First we consider the decomposition
 \[
  \D \times \C^3_{2} = \d\f (\R^2) \oplus \d\f(\R^2)^\perp  \oplus \d\f(\R^2)^{\perp \perp}
 \] 
of the trivial bundle into three real, pairwise perpendicular rank 
 $2$ subbundles, where 
\[
 \mathcal{H} = \d\f (\R^2) \oplus \d\f(\R^2)^\perp
\]
 is the natural horizontal subspace of $T_{\f(z)} H^5_3 .$
 The vectors $\{\f_u, \f_v\}$, $\{\sqrt{-1}\f_u, \sqrt{-1}\f_v\}$, and $\{\sqrt{-1}\f, \f\}$ form 
 a basis of these real two-dimensional subspaces respectively. 
 Their pairwise Hermitian products can be read off from the formulas listed just above.
 By the definition of the metric $g$ the projection $\hat \pi$ 
 from $\D \times \C^3_{2} $ to $\C^3_2$ induces 
 an isometry from 
 $\d\f (\R^2)$ onto  $\d f(\R^2)$ and from $\d \f (\R^2)^\perp$ onto $\d f(\R^2)^\perp $.
 The vector $\sqrt{-1}\f$ is annihilated by $\d \pi$.

 More precisely, putting $E_1 = \f_u$ and $E_2 = \f_v$ we obtain  basis vectors of 
  $\d \f(\R^2)$. 
 Then $\sqrt{-1}E_1= \sqrt{-1}\f_u$ and  $\sqrt{-1}E_2= \sqrt{-1}\f_v $ and $\sqrt{-1}\f_u$ and 
 $\sqrt{-1}\f_v$ are 
 in $\d \f(\R^2)^\perp$. The differential of $\pi$ maps the vectors onto 
  $\d f(\R^2)$ and $\d f(\R^2)^\perp$, respectively.

 Recall that from \cite[Theorem 1, c)]{Reck}, the second fundamental form 
 $I\!I^{f}$ for a timelike Lagrangian surface $f$ in $\CH$
 can be obtained by the second fundamental form $I\!I^{\f}$ for a
 horizontal lift $\f$ in $H^5_3$ which takes values in 
 the horizontal subspace $\mathcal H$, given as $I\!I^{f} = \d \pi I\!I^{\f}$:
 see \cite[Theorem 1]{Reck}.
 Since $\f$ is in $H^5_3 \subset \C^3_2$, and  $XY \f$ takes values in $\C^3_2$ and 
 the second fundamental form $I\!I^{\f}$ can be given by 
\[
  I\!I^{\f} (X, Y ) = g(XY \f, e_1)e_1 - g(XY \f, e_2 )e_2,
 \]
 with $X, Y \in \Gamma (T \D)$ and  $e_1$ and $e_2$ being perpendicular 
 vectors of $\d \f(\R^2)^\perp$ of ``length'' 
 $1$ and $-1$ respectively. Note that $ I\!I^{\f} (X, Y ) $ takes values in $T_p \mathcal H$.
 Then the mean curvature vector $\mathfrak H$ of $\f$
 is defined by $\frac{1}{2} \mathrm{Tr}_{g} I\!I^{\f}$, 
 that is, 
\begin{equation} \label{Horig}
 \mathfrak H = \frac{1}{2} \left\{I\!I^{\f}(\partial_s, \partial_s) -I\!I^{\f}(\partial_t, \partial_t)\right\}
 =\frac{1}{2} \left\{g(\partial_s^2 \f - \partial_t^2 \f,  e_1 )e_1 - g(\partial_s^2 \f-\partial_t^2 \f, e_2 )e_2 \right\},
\end{equation}
 where $\{\partial_s, \partial_t\}$ is the orthonormal frame with respect to 
 the indefinite metric $\d s^2$.
 In our case we define $e_1 = \frac{\sqrt{-1}}{\sqrt{2}}(E_1 + E_2) e^{-\frac{\omega}{2}}$
 and $e_2 = \frac{\sqrt{-1}}{\sqrt{2}}(E_1 - E_2) e^{-\frac{\omega}{2}}$ and 
 $\partial_s = \frac{1}{\sqrt{2}}e^{-\frac{\omega}{2}}(\partial_u + \partial_v)$, 
 $\partial_t = \frac{1}{\sqrt{2}}e^{-\frac{\omega}{2}}(\partial_u - \partial_v).$
Then, from equation \eqref{Horig} it follows by a straightforward computation
\begin{equation*} 
\mathfrak H = e^{-2 \omega} \left\{g(\f_{uv}, \sqrt{-1}E_1 )\sqrt{-1}E_2 +  g(\f_{uv}, \sqrt{-1}E_2 )\sqrt{-1}E_1\right\}.
\end{equation*}
 Now the mean curvature $H$ of the original immersion $f$ is
 given by 
\begin{equation*}
 \d\pi (\mathfrak{H}) = H.
\end{equation*}

 By abuse of notation we will also call $\mathfrak{H}$
 the \textit{mean curvature vector} of $f$.
 A straightforward computation shows that we obtain the following 
 description of $\mathfrak{H}$:
\begin{equation*}
\mathfrak{H}=  g(e^{- \omega}\f_{uv}, \mathfrak E_1 )\mathfrak E_2  
 +  g(e^{- \omega}\f_{uv}, \mathfrak E_2   )\mathfrak E_1,
\end{equation*}
 where $\{\mathfrak E_1, \mathfrak E_2\}=\{\sqrt{-1}e^{-\omega/2}\f_u, \sqrt{-1}e^{-\omega/2}\f_v\}$
 is a null basis of $\d\f(\R^2)^\perp$.
 We thus  compute $\mathfrak{H}$ as the component of $e^{-\omega}  \f_{uv}$ 
 in $\d\f (\R^2)^\perp$. 
 Since  $g(\f_{uv}, \f_u) = g(\f_{uv}, \f_v)=0,$ we obtain 
$e^{- \omega}\f_{uv} = \mathfrak H + a i\f + b \f.$ Taking inner products yields $b = - g(\f_{uv},\f) = 1$ and
$a=0$. Thus altogether we obtain:
\begin{equation}\label{eq:Hexpression}
\mathfrak H =e^{-\omega}  \f_{uv} - \f.
\end{equation}

\begin{Remark}
 In general, for a surface $\f$ in $H^5_3$, the second fundamental form 
 can be written in the form 
 \[
  I\!I(X, Y ) = g(XY \f, \mathfrak{e}_1) \mathfrak{e}_1 - 
  g(XY \f,  \mathfrak{e}_2 ) \mathfrak{e}_2 
 -g(XY \f,  \mathfrak{e}_3 ) \mathfrak{e}_3,
 \]
 where $\{\mathfrak{e_1},\mathfrak{e_2}, \mathfrak e_3 \}$ are perpendicular vectors 
 in $\d \f (\R^2)^{\perp} \subset T_{\f(p)} H^5_3$ of lengths $1$ and $-1$ and
 $-1$, respectively.
 If $\f$ is a Legendre immersion, then $\mathfrak{e}_3 = \sqrt{-1} \f$
 and $g(XY \f, \mathfrak{e}_3 )=0$ from the Legendrian condition 
 $\zeta (\f(q)) (\d \f (q))=0$.
\end{Remark}

 Now we can prove the following theorem.
\begin{Theorem}
 The Maurer-Cartan form 
 $\mathcal F^{-1} \d \mathcal F = \mathcal F^{-1} \mathcal F_u \d u  
 +  \mathcal F^{-1} \mathcal F_v \d v$
 can be computed as 
\begin{align}
 \mathcal U &= \mathcal F^{-1} \mathcal F_u =
 \begin{pmatrix} 
 \ell+\frac{\omega_u}{2} & m & e^{\omega/2}\\
 - Qe^{-\omega} & \ell - \frac{\omega_u}{2} & 0\\
 0 & e^{\omega/2}& 0
 \end{pmatrix},  \label{eq:U}\\[0.2cm] 
 \mathcal V &= \mathcal F^{-1} \mathcal F_v=
 \begin{pmatrix} 
 m  - \frac{\omega_v}{2}
 & - R e^{-\omega}  & 0 \\
 \ell & m + \frac{\omega_v}{2} & e^{\omega/2}\\
 e^{\omega/2}& 0 & 0
\end{pmatrix}, 
 \label{eq:V}
\end{align}
 where 
\begin{equation}\label{eq:QandR}
 Q = \langle \f_{uuu}, \f\rangle, \quad 
R = \langle \f_{vvv}, \f\rangle, \quad 
\ell =  \langle \mathfrak H ,\f_u \rangle, \quad 
m =  \langle \mathfrak H ,\f_v \rangle, 
\end{equation}
 and $\mathfrak H$ is the mean curvature vector in \eqref{eq:Hexpression}.
 Moreover,  $\ell$, $m$, $Q$ and $R$ take purely imaginary values.
\end{Theorem}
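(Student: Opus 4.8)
\emph{Proof proposal.} The plan is to read off every entry of $\mathcal{U}$ and $\mathcal{V}$ directly from the $u$- and $v$-derivatives of the three columns of $\mathcal F$, exploiting the fact that $\mathcal F$ is ``orthonormal'' with Gram matrix $\r P$. Write $C_1 = e^{-\omega/2}\f_u$, $C_2 = e^{-\omega/2}\f_v$, $C_3 = \f$ for the columns. By \eqref{eq:null}, the horizontality $\langle \f_u, \f\rangle = \langle \f_v, \f\rangle = 0$, and $\langle \f,\f\rangle = -1$, these satisfy $\langle C_i, C_j\rangle = \r P_{ij}$, which is exactly \eqref{eq:SU21}. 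Since $\r P^{-1} = \r P$, this yields the extraction formulas: if $w = c_1 C_1 + c_2 C_2 + c_3 C_3$ then $c_1 = \langle w, C_2\rangle$, $c_2 = \langle w, C_1\rangle$, $c_3 = -\langle w, C_3\rangle$. Because the $j$-th column of $\mathcal{U} = \mathcal F^{-1}\mathcal F_u$ is precisely the coordinate vector of $\partial_u C_j$ in the basis $\{C_1, C_2, C_3\}$, computing $\mathcal U$ reduces to evaluating the Hermitian products of $\partial_u C_j$ against $C_1, C_2, C_3$; likewise for $\mathcal V$ with $\partial_v$.

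Next I would differentiate the coordinate-frame relations to assemble a short dictionary of third-order products. Differentiating $\langle \f_u, \f_u\rangle = 0$ in $u$ and $\langle \f_v, \f_v\rangle = 0$ in $v$ shows $\langle \f_{uu}, \f_u\rangle$ and $\langle \f_{vv}, \f_v\rangle$ are purely imaginary; differentiating horizontality twice gives $\langle \f_{uu}, \f\rangle = \langle \f_{vv}, \f\rangle = 0$ and hence $Q = -\langle \f_{uu}, \f_u\rangle$, $R = -\langle \f_{vv}, \f_v\rangle$, so $Q$ and $R$ are purely imaginary. Using $\mathfrak H = e^{-\omega}\f_{uv} - \f$ from \eqref{eq:Hexpression} together with $\langle \f, \f_u\rangle = \langle \f, \f_v\rangle = 0$ gives $\ell = e^{-\omega}\langle \f_{uv}, \f_u\rangle$ and $m = e^{-\omega}\langle \f_{uv}, \f_v\rangle$; differentiating $\langle \f_u, \f_u\rangle = 0$ in $v$ and $\langle \f_v, \f_v\rangle = 0$ in $u$ shows both are purely imaginary, establishing the last assertion of the theorem. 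Finally, differentiating $\langle \f_u, \f_v\rangle = e^{\omega}$ supplies the mixed products $\langle \f_{uu}, \f_v\rangle = \omega_u e^{\omega} - e^{\omega}\bar\ell$ and $\langle \f_{vv}, \f_u\rangle = \omega_v e^{\omega} - e^{\omega}\bar m$, while differentiating horizontality in the mixed direction gives $\langle \f_{uv}, \f\rangle = -e^{\omega}$.

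With this dictionary each entry becomes a one-line computation. For instance $\partial_u C_1 = -\tfrac{\omega_u}{2}C_1 + e^{-\omega/2}\f_{uu}$, whose products against $C_2$, $C_1$, $C_3$ give $\tfrac{\omega_u}{2} - \bar\ell = \ell + \tfrac{\omega_u}{2}$, $e^{-\omega}\langle \f_{uu}, \f_u\rangle = -Q e^{-\omega}$, and $0$, which is the first column of $\mathcal U$; similarly $\partial_u C_3 = \f_u = e^{\omega/2}C_1$ immediately produces the third column $(e^{\omega/2}, 0, 0)^T$. The remaining columns of $\mathcal U$ and all of $\mathcal V$ follow identically by interchanging the roles of $u$ and $v$ (and of $Q$ and $R$).

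The main obstacle is bookkeeping rather than anything conceptual: one must track the conjugate-linearity of $\langle\,\cdot\,,\,\cdot\,\rangle$ in its second argument and invoke the purely-imaginary identities $\bar\ell = -\ell$ and $\bar m = -m$ at precisely the right moments in order to convert the mixed products $\langle \f_{uu}, \f_v\rangle$ and $\langle \f_{vv}, \f_u\rangle$ into the stated combinations of $\omega$-derivatives with $\ell$ and $m$. A sign slip there is the most likely point of failure, so I would verify these two mixed products in isolation before substituting them into the diagonal entries of $\mathcal U$ and $\mathcal V$.
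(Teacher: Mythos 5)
Your proposal is correct and follows essentially the same route as the paper: it reads off the entries of $\mathcal U$ and $\mathcal V$ as Hermitian products of the differentiated columns against the frame (using $\mathcal F^{-1}=\r{P}\bar{\mathcal F}^T\r{P}$), and derives the needed identities $\langle \f_{uu},\f\rangle=0$, $Q=-\langle\f_{uu},\f_u\rangle$, $\langle\f_{uu},\f_v\rangle=\omega_u e^{\omega}+ e^{\omega}\ell$, etc., by differentiating the null and horizontality relations exactly as in the paper's proof. The only (harmless) variation is that you establish the purely imaginary character of $\ell$ and $m$ by differentiating $\langle\f_u,\f_u\rangle=0$ rather than by appealing to $\mathfrak H\in \d\f(\R^2)^{\perp}$.
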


\begin{proof}
 Writing
 $\mathcal F:= (\f_1, \f_2, \f_3)$ and  using the last equation in \eqref{eq:SU21},
 we obtain
 $\mathcal F^{-1} \d\mathcal F = \r{P} \bar{\mathcal F}^t \r{P} \d \mathcal F$.
 A straightforward computation now shows that
 \[
 \mathcal U =   \mathcal F^{-1} \mathcal F_u  = 
\left(
\begin{array}{rrr}
 \langle {\f_1}_u, \f_2\rangle &  \langle {\f_2}_u, \f_2 \rangle &  \langle
 {\f_3}_u , \f_2 \rangle \\
 \langle {\f_1}_u, \f_1\rangle &  \langle {\f_2}_u, \f_1\rangle &  \langle
 {\f_3}_u , \f_1 \rangle \\
 -\langle {\f_1}_u, \f_3\rangle &  -\langle {\f_2}_u, \f_3\rangle &  -\langle
 {\f_3}_u , \f_3 \rangle 
\end{array}
\right),
\]
 and $\mathcal V$ is obtained from $\mathcal U$ by switching the subscripts 
 $u$ and $v$.
 We want to compute the coefficients of $\mathcal U$ and $\mathcal V$ 
 in more detail. First we note that 
 by the definition of the coordinate frame $\mathcal F$
 in \eqref{eq:coordinateframe}, we have
\begin{align}
 {\f_1}_u &= e^{-\omega/2}\left( \f_{uu} - \frac{\omega_u}{2} \f_u\right), \quad
 {\f_2}_u = e^{-\omega/2} \left(\f_{vu} - \frac{\omega_u}{2} \f_v\right), 
 \quad {\f_3}_u = \f_u, \label{eq:fu}
%\intertext{and}
% {f_1}_v &= e^{-\omega/2}\left(\f_{uv} - \frac{\omega_v}{2} \f_u\right), \quad
% {f_2}_v = e^{-\omega/2} \left(\f_{vv} - \frac{\omega_v}{2} \f_v\right), 
% \quad {f_3}_v = \f_v.\label{eq:fv}
\end{align}
 A straightforward computation by using 
 \eqref{eq:fu} shows that 
\begin{alignat*}{3}
\langle {\f_1}_u, \f_2\rangle &= e^{- \omega} \langle \f_{uu}, \f_v\rangle
 - \frac{\omega_u}{2}, \quad &
\langle {\f_2}_u, \f_2 \rangle &= e^{-\omega} \langle \f_{v u}, \f_v\rangle, 
 \quad & \langle {\f_3}_u , \f_2 \rangle &=e^{\omega/2}, \\
 \langle {\f_1}_u, \f_1\rangle &= e^{-\omega} \langle \f_{uu}, \f_u\rangle, 
 \quad & \langle {\f_2}_u, \f_1\rangle &= e^{-\omega} \langle \f_{v u}, \f_u\rangle
 - \frac{\omega_u}{2}, \quad&
 \langle {\f_3}_u , \f_1 \rangle &= 0, \\
 -\langle {\f_1}_u, \f_3\rangle&= - e^{- \omega/2}
 \langle \f_{uu}, \f\rangle, \quad &
 -\langle {\f_2}_u, \f_3\rangle &=  -e^{-\omega/2} \langle \f_{v u}, \f\rangle,
 \quad & -\langle {\f_3}_u , \f_3 \rangle  &= 0. 
\end{alignat*}
 Then it is easy to see that $ -\langle {\f_2}_u, \f_3\rangle = e^{\omega/2}$.
 Further, $\langle \f_{uu}, \f_v\rangle$ can be computed by \eqref{eq:Hexpression}
 as 
\begin{equation*}
 \langle \f_{uu}, \f_v \rangle = 
\langle \f_{u},\f_v \rangle_u - \langle \f_u, \f_{v u}\rangle
 = \omega_u e^{\omega}  - e^{\omega} \langle \f_u, \mathfrak H\rangle
\end{equation*}
 and thus $\langle {\f_1}_u, \f_2\rangle$ can be rephrased as
\begin{equation*}
\langle {\f_1}_u, \f_2\rangle = \frac{\omega_u}{2} - \langle \f_u, 
 \mathfrak H\rangle = \frac{\omega_u}{2} + \ell.
\end{equation*}
 Here, since $\mathfrak H$ is the mean curvature vector 
 $\f$, that is, $\mathfrak H$  can be represented by  
 $\{\mathfrak E_1, \mathfrak E_2\}=\{\sqrt{-1}e^{-\omega/2}\f_u, 
 \sqrt{-1}e^{-\omega/2}\f_v\}$, 
 $\Re \langle \mathfrak H, \f_u\rangle= 
 \Re \langle \mathfrak H, \f_v\rangle = 0$ and thus $- \langle \f_u, 
 \mathfrak H\rangle = \langle \mathfrak H, \f_u \rangle = \ell$, 
 that is, $\ell$ and $m$ take purely imaginary values. 
 Similarly,  we have 
 $\langle {\f_2}_u, \f_1\rangle = -\frac{\omega_u}{2} + \ell$.
 Further since $\langle \f_u, \f\rangle =0$ and
 $\langle \f_u, \f_u\rangle =0$, we have $ \langle \f_{uu}, \f\rangle =0$
 and thus $-\langle {\f_1}_u, \f_3\rangle = 0$.
 By using $\langle \f_u, \f_u \rangle =0$, 
 the second derivative of $\langle \f_u, \f\rangle =0$
 with respect to $u$  implies 
 $\langle \f_{uuu}, \f\rangle = -\langle \f_{uu}, \f_u\rangle$. 
 Moreover, the derivative of $\langle \f_{u}, \f_u\rangle= 0$ 
 with respect to $u$ implies that $\Re \langle \f_{uu}, \f_u\rangle
 = 0$, thus 
 $Q = \langle \f_{uuu}, \f\rangle$ takes purely imaginary values
  and $\langle {\f_1}_u, \f_1\rangle = - Q e^{-\omega}$.
 
 Finally we obtain $\mathcal U$ 
 as in \eqref{eq:U}.
 A similar computation for $\mathcal F^{-1} \mathcal F_v$ shows that 
 $\mathcal V$ is as in \eqref{eq:V}.
 \end{proof}
\begin{Definition}
 By using the purely imaginary functions $Q$, $R$, $\ell$ and $m$ in 
 \eqref{eq:QandR}, we 
 define two differentials  as
\begin{align}\label{eq:cubic}
 C & = Q \, \d u^3 + R \, \d v^3 = \langle \f_{uuu}, \f\rangle\, \d u^3 
 + \langle \f_{vvv}, \f\rangle \, \d v^3, \\
 L & = \ell \, \d u + m \,\d v = 
\langle \mathfrak H, \f_u \rangle\, \d u
 + \langle \mathfrak H, \f_{v}\rangle \, \d v. \label{eq:1form}
\end{align}
 The form $C$ will be called the {\it cubic differential} and 
 $L$ will be called the {\it mean curvature $1$-form}
 (some authors also call it ``Maslov form'').
 Both forms take purely imaginary values.
\end{Definition} 

\begin{Remark}
 The cubic differential $C$ and the mean curvature $1$-form $L$ 
 are defined by using a horizontal lift $\f$ instead of the original 
 immersion $f$, however, $C$ and $L$ are independent of the choice 
 of a horizontal lift. Thus they are an invariant of the timelike 
 Lagrangian immersion $f$.
\end{Remark}

%%%%%%%%%%%%%%%%%%%%%%%%%%%%%

\subsection{Fundamental theorem}
 The Maurer-Cartan form $\alpha = \mathcal F^{-1} \d \mathcal F$ of 
 the coordinate frame for a timelike Lagrangian immersion $f$ in $\CH$ satisfies 
 the Maurer-Cartan equation 
 $\d \alpha + \alpha \wedge \alpha = 0$,			     
 that is 
\[
 \mathcal U_{v} - \mathcal V_u+[\mathcal V, \mathcal U] =0
\]
 holds. Then a straightforward computation shows that 
 we have the following system of partial differential equations:
\begin{align}\label{eq:compatibility1}
&\omega_{u v}= e^{\omega}- Q R e^{-2 \omega} + m \ell, \\ 
&\ell_v - m_u =0, \label{eq:compatibility2}\\
& Q_v e^{-2 \omega} +(e^{-\omega} \ell)_u =0, 
 \quad 
 R_u e^{-2 \omega} +(e^{-\omega} m)_v=0.\label{eq:compatibility3}
\end{align}
 In the following we show the fundamental theorem of timelike 
 Lagrangian surfaces in $\CH$.
\begin{Theorem}\label{thm:fund}
 Let $f: \D \to \CH$ be a timelike Lagrangian immersion and 
 $\f$ a horizontal lift of $f$. Further let $\d s^2 = 2 e^{\omega}\d u \d v$,
 $C= Q\, \d u^3 + R\, \d v^3$ and $L = \ell\, \d u + m\, \d v$ be the metric, 
 the cubic differential and the mean curvature $1$-form of $f$. Then these 
 functions $\omega, Q, R, \ell$ and $m$ 
 satisfy the system of partial differential equations
 \eqref{eq:compatibility1}, \eqref{eq:compatibility2} 
 and \eqref{eq:compatibility3}.

 Conversely let $\d s^2 = 2 e^{\omega} \d u \d v$,  $C= Q\, \d u^3 + R\, \d v^3$ and 
 $L = \ell\, \d u + m\, \d v$ be defined by solutions of the system of partial 
 differential equations 
 \eqref{eq:compatibility1}, \eqref{eq:compatibility2} and \eqref{eq:compatibility3}
 with purely imaginary $Q$, $R$, $\ell$ and $m$. 
 Then there exists a timelike Lagrangian immersion $f$ such that 
 the metric, the cubic differential and the mean curvature $1$-form
 are $\d s^2$, $C$ and $L$, respectively.
\end{Theorem}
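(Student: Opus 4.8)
The plan is to establish the two directions separately; the direct implication is immediate from the material already assembled, while the converse is a standard ``integrate the Maurer--Cartan form'' argument whose only subtle points are the membership of the frame in $\U$ and the identification of the recovered invariants. For the direct part, the coordinate frame $\mathcal F$ takes values in $\U$ by \eqref{eq:SU21}, so $\alpha = \mathcal U\,\d u + \mathcal V\,\d v$ is a pulled-back Maurer--Cartan form and automatically satisfies $\mathcal U_v - \mathcal V_u + [\mathcal V,\mathcal U]=0$. Substituting the explicit entries \eqref{eq:U} and \eqref{eq:V} and separating matrix components produces exactly \eqref{eq:compatibility1}, \eqref{eq:compatibility2} and \eqref{eq:compatibility3}, which is the computation indicated just before the statement.

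For the converse I would begin with the given functions $\omega,Q,R,\ell,m$ (with $Q,R,\ell,m$ purely imaginary) and define matrix-valued coefficients $\mathcal U,\mathcal V$ by the right-hand sides of \eqref{eq:U} and \eqref{eq:V}. The first step is to verify that the prescribed system \eqref{eq:compatibility1}--\eqref{eq:compatibility3} is equivalent to the flatness condition $\mathcal U_v-\mathcal V_u+[\mathcal V,\mathcal U]=0$; this is the same bracket calculation as in the direct part, now read in reverse. Since $\D$ is simply connected, flatness allows me to integrate $\d\mathcal F = \mathcal F(\mathcal U\,\d u+\mathcal V\,\d v)$ to a map $\mathcal F\colon \D\to\GL$, determined up to the choice of an initial value $\mathcal F(p_0)$. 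Taking $\mathcal F(p_0)\in\U$, I then check that $\mathcal F$ remains in $\U$, for which it suffices that $\mathcal U$ and $\mathcal V$ lie in the Lie algebra of $\U$, i.e. that $\r{P}\,\bar{\mathcal U}^{\,T}\r{P}=-\mathcal U$ and similarly for $\mathcal V$. A direct inspection of the entries shows that this holds precisely because $Q,R,\ell,m$ are purely imaginary, which is exactly where that hypothesis is used.

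Writing $\mathcal F=(\f_1,\f_2,\f_3)$ and setting $\f:=\f_3$, $f:=\pi\circ\f$, the relation \eqref{eq:SU21} says that the Gram matrix $(\langle\f_i,\f_j\rangle)$ equals $\r{P}$; in particular $\langle\f_3,\f_3\rangle=-1$, so $\f$ maps into $H^5_3$. Reading the third columns of \eqref{eq:U} and \eqref{eq:V} off $\d\mathcal F=\mathcal F\alpha$ gives $\f_u=e^{\omega/2}\f_1$ and $\f_v=e^{\omega/2}\f_2$. From the Gram relations this yields $\langle\d\f,\f\rangle=0$, so $\f$ is a horizontal lift in the sense of \eqref{eq:horizontal}; it yields $\langle\f_u,\f_u\rangle=\langle\f_v,\f_v\rangle=0$ and $\langle\f_u,\f_v\rangle=e^{\omega}>0$, so $\Omega(f_u,f_v)=\Im\langle\f_u,\f_v\rangle=0$ and $f$ is Lagrangian; and it shows that the induced metric is the Lorentzian form $2e^{\omega}\,\d u\,\d v$, so $f$ is a timelike immersion into $\CH$.

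It remains to confirm that the invariants of this $f$ are the prescribed data, which is the genuine content since $C$ and $L$ are defined geometrically rather than as frame entries. The metric is $2e^{\omega}\,\d u\,\d v$ by the preceding paragraph. For the cubic differential, the identity $\f_u=e^{\omega/2}\f_1$ gives $\langle{\f_1}_u,\f_1\rangle=e^{-\omega}\langle\f_{uu},\f_u\rangle=-e^{-\omega}\langle\f_{uuu},\f\rangle$, and since this is the $(2,1)$-entry of $\mathcal U$, namely $-Qe^{-\omega}$, I recover $\langle\f_{uuu},\f\rangle=Q$, and symmetrically $\langle\f_{vvv},\f\rangle=R$. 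For the mean curvature $1$-form, formula \eqref{eq:Hexpression} applies to the lift just built, so $\langle\mathfrak H,\f_u\rangle=e^{-\omega}\langle\f_{uv},\f_u\rangle$; expanding $\f_{uv}=\partial_u(e^{\omega/2}\f_2)$ through the second column of $\mathcal U$ gives $\f_{uv}=e^{\omega/2}(m\f_1+\ell\f_2+e^{\omega/2}\f_3)$, whence $\langle\mathfrak H,\f_u\rangle=\ell$ and likewise $\langle\mathfrak H,\f_v\rangle=m$. Thus $\d s^2$, $C$ and $L$ agree with the prescribed data, completing the converse. I expect the two substantive points to be the $\U$-membership in the integration step and this final identification of the mean curvature $1$-form, where one must check that the geometric vector $\mathfrak H$ pairs with $\f_u,\f_v$ to reproduce exactly the given $\ell,m$; everything else is routine bookkeeping with the Gram relations.
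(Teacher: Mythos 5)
Your proof is correct and follows essentially the same route as the paper's: integrate the flat $\U$-valued Maurer--Cartan form built from \eqref{eq:U}--\eqref{eq:V}, take the third column of the resulting frame as the horizontal lift, and project via the Boothby--Wang fibration. You in fact supply two verifications the paper leaves implicit — that the purely imaginary hypothesis on $Q,R,\ell,m$ forces $\mathcal U,\mathcal V$ into the Lie algebra of $\U$ so the integrated frame stays in the group, and that the geometrically defined cubic differential and mean curvature $1$-form of the constructed surface reproduce the prescribed data — both of which check out.
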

\begin{proof}
 We only need to prove the converse.
 Since $\omega$, $C$ and $L$ satisfy \eqref{eq:compatibility1}, 
 \eqref{eq:compatibility2} and \eqref{eq:compatibility3}, 
 there exists an $\mathcal F: \D \to \U$ such
  that $\mathcal F^{-1} \d \mathcal F 
 = \mathcal{U} \d u  + \mathcal{V} \d v$
 with $\mathcal{U}$ and $\mathcal{V}$ defined in \eqref{eq:U} and \eqref{eq:V}.
 Let $e_3=(0, 0, 1)^T$ and set $ \f= \mathcal F e_3$. Then it is easy to see 
 that $\f$ takes values in $H^5_{3} $ : $\langle \f, \f \rangle    = \langle \mathcal{F} e_3, \mathcal{F} e_3 \rangle  = - |a|^2,$ if $\mathcal{F} = a \mathcal{F}_0$, where the latter matrix has determinant $1$. But the determinant of $\mathcal{F}$ is in $S^1$, 
 whence $|a| = 1$. Moreover, it is also straightforward 
 to see 
\[
 \langle \f_u, \f_u\rangle =  \langle\f_v, \f_v\rangle=0, 
\]
 and  $\langle \f_u, \f_v\rangle$ takes values in $\R^{\times}$, that is,
 $\f$ is timelike, is parametrized by null coordinates and is Legendrian, 
 that is, $\Im \langle \f_u, \f_v \rangle =0$ holds. Finally taking
 the Boothby-Wang fibration $\pi: H^5_3 \to \CH$ for $\f$, that is $f = \pi\circ \f$, we have a timelike Lagrangian immersion $f$ in $\CH$.  
\end{proof}
\subsection{Minimality}
 In the following, we characterize minimality of 
 a timelike Lagrangian immersion in $\CH$ in terms of
 the invariant $1$-form $L$ defined in the previous section.
\begin{Proposition}\label{prp:minicharact}
 Let $f : \D \to \CH$ be a timelike Lagrangian immersion and 
 $L$  the differential $1$-form defined in \eqref{eq:1form}.
 Then $L$ is closed. Moreover, let $\mathcal{L}$ denote a purely imaginary integral of $L$ on $\D$.  Then the diagonal matrix 
 \begin{equation}\label{eq:D}
  D = \exp \left[\di \left(- \mathcal{L}, -  \mathcal{L}, \; 0 \right)\right] = 
 \di \left(\exp\left[- \mathcal{L},\right], 
 \exp \left[-\mathcal{L},\right], 1\right),
 \end{equation}
 is well-defined and 
 $\det (\mathcal FD)$ is constant. 
 Furthermore, $f$ is minimal if and only if $L\equiv 0$ if and only if
 the determinant $\det \mathcal F$ is constant.
 Thus, without loss of generality
  we can assume that $\det \mathcal F \equiv 1$ holds.
\end{Proposition}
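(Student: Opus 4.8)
The plan is to read everything off the already-computed Maurer--Cartan matrices $\mathcal{U}$, $\mathcal{V}$ together with the compatibility equations. First I would settle closedness of $L$: since $L = \ell\,\d u + m\,\d v$ we have $\d L = (m_u - \ell_v)\,\d u \wedge \d v$, and this vanishes precisely because of the compatibility equation \eqref{eq:compatibility2}, namely $\ell_v - m_u = 0$. As $\D$ is simply connected and $\ell, m$ are purely imaginary, the closed form $L$ has a purely imaginary primitive $\mathcal{L}$ with $\d\mathcal{L} = L$. Since $\mathcal{L}$ is purely imaginary, $\exp[-\mathcal{L}] \in S^1$, so $D = \di(\exp[-\mathcal{L}], \exp[-\mathcal{L}], 1)$ is a well-defined unitary matrix; a direct check against $\r{P}$ in fact shows $D \in \U$, and clearly $D e_3 = e_3$.

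The central computation is the logarithmic derivative of $\det\mathcal{F}$. Reading the diagonals of \eqref{eq:U} and \eqref{eq:V} gives $\tr\mathcal{U} = 2\ell$ and $\tr\mathcal{V} = 2m$, so Jacobi's formula $\partial_u \det\mathcal{F} = \det\mathcal{F}\cdot\tr(\mathcal{F}^{-1}\mathcal{F}_u)$ and its $v$-analogue yield $\d\log\det\mathcal{F} = 2\ell\,\d u + 2m\,\d v = 2L$. Consequently $\det(\mathcal{F}D) = \det\mathcal{F}\cdot\exp[-2\mathcal{L}]$ satisfies $\d\log\det(\mathcal{F}D) = 2L - 2\,\d\mathcal{L} = 0$, so $\det(\mathcal{F}D)$ is constant. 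The very same identity $\d\log\det\mathcal{F} = 2L$ gives one of the asserted equivalences at once: $\det\mathcal{F}$ is constant if and only if $L \equiv 0$.

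It then remains to match $L \equiv 0$ with minimality. Here I would use the expression of $\mathfrak{H}$ in the null normal basis $\{\mathfrak{E}_1, \mathfrak{E}_2\} = \{\sqrt{-1}e^{-\omega/2}\f_u, \sqrt{-1}e^{-\omega/2}\f_v\}$ of $\d\f(\R^2)^\perp$: writing $\mathfrak{H} = \alpha\,\sqrt{-1}\f_u + \beta\,\sqrt{-1}\f_v$ and pairing against $\f_u, \f_v$ using $\langle\f_u,\f_u\rangle = \langle\f_v,\f_v\rangle = 0$ and $\langle\f_u,\f_v\rangle = e^{\omega}$, one finds $\ell = \langle\mathfrak{H},\f_u\rangle = \beta\,\sqrt{-1}\,e^{\omega}$ and $m = \langle\mathfrak{H},\f_v\rangle = \alpha\,\sqrt{-1}\,e^{\omega}$. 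Since $e^{\omega}$ never vanishes, $\ell = m = 0$ is equivalent to $\alpha = \beta = 0$, i.e.\ to $\mathfrak{H} = 0$; hence $f$ is minimal if and only if $L \equiv 0$, closing the chain of equivalences.

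Finally, for the normalization: replacing $\mathcal{F}$ by $\mathcal{F}D$ leaves the horizontal lift $\f = \mathcal{F}e_3 = (\mathcal{F}D)e_3$ and thus the immersion $f$ unchanged, stays inside $\U$, and now has constant determinant; multiplying by a suitable constant factor from $S^1$ (permissible because horizontal lifts are defined only up to a constant unit factor) normalizes this constant to $1$, so that we may assume $\det\mathcal{F} \equiv 1$, i.e.\ $\mathcal{F} \in \SU$. I expect the only genuinely delicate point to be the normal-basis identification $\ell = m = 0 \Leftrightarrow \mathfrak{H} = 0$, which is what forces the purely algebraic condition that $\det\mathcal{F}$ be constant to coincide with the geometric condition of minimality; everything else is bookkeeping with the trace and with the compatibility equation \eqref{eq:compatibility2}.
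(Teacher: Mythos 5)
Your proposal is correct and follows essentially the same route as the paper's proof: closedness of $L$ from \eqref{eq:compatibility2}, the trace computation $\tr(\mathcal F^{-1}\d\mathcal F)=2L$ giving $\tr\{(\mathcal FD)^{-1}\d(\mathcal FD)\}=0$, and the identification of $\ell, m$ as the components of $\mathfrak H$ in the null basis of $\d\f(\R^2)^\perp$. You merely make explicit some steps the paper leaves implicit (e.g.\ $\d\log\det\mathcal F = 2L$ and the final $S^1$-normalization, which the paper handles by adjusting the integration constant in $\mathcal L$ and in Remark \ref{rm:minimal}).
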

\begin{proof}
 The closedness of $L$ follows from \eqref{eq:compatibility2}.
 Thus 
  \[
   D  = \di \left(\exp\left[- \mathcal{L} \right], 
 \exp\left[- \mathcal{L}\right], 1\right)
  \]
  is well-defined. It is also easy to see that 
  $\tr \{(\mathcal FD)^{-1} \d (\mathcal FD)\}=0$, 
  thus $\det (\mathcal FD)$ is constant.
  Since  $\mathcal{L}$ is only determined up to a purely imaginary  constant, 
  we can adjust this  constant such that the determinant of $\mathcal FD$ is identically $1$.
  
  Moreover,  by \eqref{eq:1form} the mean curvature vector $\mathfrak H$ 
  vanishes  if and only if $\ell = m =0$ and 
  this is equivalent with $L=0$.
\end{proof}
\begin{Remark}\label{rm:minimal}
\mbox{}
\begin{enumerate}
\item Since $L$ takes purely imaginary values, the function 
 $\exp\left(2 \mathcal L \right)$
 takes values $S^1$.
 It is called {\it  Lagrangian angle function}  of $f$, \cite[after Lemma 3.1]{ML}.
 
 \item Note that  $\mathcal{F}D$ is in $\U$.
 The last column  of this matrix is the same (horizontal lift) as the one of $\mathcal{F}$. However, the first two columns are rotated. Hence $\mathcal{F}D$ is in general no longer a coordinate frame of some timelike Lagrangian immersion.
 
 \item Since $\mathcal{F}D$ also has constant determinant (in $S^1$), one can easily change to a new ``frame'' which has determinant $1$.
 This can be done naturally in several  ways: one can multiply the  matrix $\mathcal{F}D$ by $a^{-1/3}$,
 where $a = \det (\mathcal{F}D)$ is 
 or one can multiply on the right by the matrix 
 $\hat{D} = \di (a^{-1/2}, a^{-1/2}, 1)$.
 Actually, by replacing the lift $\f$ of the original timelike Lagrangian 
 immersion $f$ by the 
 lift 
 $\hat{\f} = a^{-1} \f\; , a \in S^1,$ one obtains the same mean curvature $1$-form $L$ and thus automatically 
 $\hat{\mathcal{F}}D (0,0) = \id$, whence also  
 $\det(\mathcal{F}D) \equiv 1$.
 
 \item If a timelike Lagrangian surface $f$ is minimal, then of the two normalizations just discussed the last option seems to be preferable, since in this case the new immersion 
 $\hat{\f}$ leads directly to a coordinate frame which satisfies $\mathcal{F}(0,0) = \id$.

\item It is known that  there 
 are no compact minimal surfaces in a pseudo-Riemannian manifold 
 with non-positive curvature, see for example \cite[p. 379]{KNII}.
 \end{enumerate}
\end{Remark}

%%%%%%%%%%%%%%%%%%%%%%%%%%%%%

\section{Characterization of a timelike minimal Lagrangian surfaces}\label{sc:Chat}
 In this section,  we characterize a timelike minimal Lagrangian surface
 in terms of a family of flat connections. For this purpose, we first consider 
 the associated family of a timelike minimal Lagrangian surface and 
 naturally introduce the so-called ``spectral parameter'' $\lambda$ 
 into the Maurer-Cartan form $\alpha$ of the coordinate frame $\mathcal F$.
 In Theorem \ref{thm:Charact}, we characterize the minimality of 
 a timelike Lagrangian surface in terms of a family of connections $\d + \alpha^{\l}$.
 
 %%%%%%%%%%%%%%%%%%%%%%%%%%%%%
\subsection{Associated family}
 Let $f: M \to \CH$ be a timelike minimal Lagrangian immersion.
 Then there exists the metric $2 e^{\omega} \d u \d v$,  
 the cubic differential $C$ and the mean curvature $1$-form $L$ 
 which vanishes identically on $M$ 
 associated to $f$. 
 Then it is clear  from \eqref{eq:compatibility1}, 
 \eqref{eq:compatibility2} and \eqref{eq:compatibility3},  
 that the integrability conditions for a minimal surface are $m = \ell = 0$ and the partial differential equation
 \begin{equation}\label{eq:Tzitzeica}
 \omega_{u v}= e^{\omega}- Q R e^{-2 \omega},\\ 
 \end{equation}
 with purely imaginary functions $Q$ and $R$ 
  satisfying $Q_v = 0$ and $R_u =0$, and 
 a real valued function $\omega$.
\begin{Remark}

 The equation \eqref{eq:Tzitzeica} is the original \textit{Tzitz\'eica
 equation} 
 of indefinite affine spheres up to sign which can be easily adjusted by 
 the change of coordinates $(u, v)$. However, the affine spheres have 
 \textit{real} cubic differential and 
 the timelike minimal Lagrangian surfaces in $\CH$ have 
 \textit{purely imaginary} cubic differential, thus a solution of 
 the Tzitz\'eica equation gives two different classes of surfaces.

\end{Remark}
 In this case, by Theorem \ref{thm:fund}, there exists a family of 
 solutions parametrized by $\lambda \in \R_{>0}$
 \begin{equation}\label{eq:hatphi}
  \left\{e^{\omega^{\lambda}},  C^{\lambda}, L^{\lambda}\right\}_{\lambda \in \R_{>0}}
 \end{equation}
such that 
\[
 \omega^{\lambda} = \omega, \quad 
 C^{\lambda} = \lambda^{-3} Q\> \d u^3 + \lambda^{3} R\> \d v^3, \quad 
 L^{\lambda} = L =0.
\]
 Then by Theorem \ref{thm:fund}, there exists a family of 
 timelike Lagrangian minimal surfaces 
 $\{{\hat f^{\lambda}}\}_{\lambda \in \R_{>0}}$ such that 
 $\hat f^{\lambda}|_{\lambda =1} = f$.
 It is natural to call the family $\{ {\hat f^{\lambda}}\}_{\lambda \in \R_{>0} }$ 
 the {\it associated family} of $f$. 
 The parameter $\l$ will be called the 
 \textit{spectral parameter}.

\begin{Remark}
 It is important to note that the parameter $\lambda$ above can actually be chosen from $\C^{\times}$ without restricting the integrability condition. 
 The solutions to some PDE's mentioned in the proof of Theorem \ref{thm:fund} can thus be computed for all $\lambda \in \C^{\times}$.
This is an important information, since in the discussion of the construction method via loop groups one will carry out the group splittings on the unit circle, while one discusses surfaces only for $\lambda \in \R_{>0}$.
\end{Remark}
 Let $\hat {\mathcal F}$ be the coordinate frame of a horizontal lift $\hat \f^{\l}$ 
 of $\hat f^{\l}$.
 Then the Maurer-Cartan form $\hat \alpha = \hat {\mathcal U} \d u + 
 \hat {\mathcal V} \d v$ of $\hat {\mathcal F}$
 for the associated family 
 $\{\hat f^{\l}\}_{\lambda \in \R_{>0}}$
 is given by $\hat {\mathcal U}$ and $\hat {\mathcal V}$ as in 
 \eqref{eq:U} and \eqref{eq:V} where we have replaced 
 $Q, R, \ell$ and $m$ by $\lambda^{-3} Q, \lambda^3 R$ , $0$ and $0$,
 respectively.
  
 Then consider 
\begin{equation}\label{eq:Gauge}
 F  =\hat {\mathcal F} G, \quad 
 G = \di ( \lambda, \lambda^{-1}, 1)
\end{equation}
 and thus 
\[
\alpha =  F^{-1}  \d F = U \d u + V \d v	   
\] 
 with $U = G^{-1}\hat{\mathcal U} G$ 
 and $V = G^{-1}\hat{\mathcal V} G$. Since $G$ takes values in 
 $\U$
 for any $\lambda \in \R_{>0}$, thus $G \hat{\mathcal F} e_3$ 
 is isometric to $\hat{\mathcal F} e_3$. Define 
 $f^{\l} = \pi \circ G \hat{\mathcal F} e_3$. Thus we do not distinguish 
 $\{\hat f^{\lambda}\}_{\lambda \in \R_{>0}}$ and 
 $\{f^{\lambda}\}_{\lambda \in \R_{>0}}$, and it will be also 
 called the associated family.
 
\subsection{A family of flat connections}\label{subsc:flatconnections}
 Let us return now  to the general case of a timelike Lagrangian immersion $f$, 
 with horizontal lift $\f$ and coordinate frame $\mathcal{F}$ with 
 $D$ defined in \eqref{eq:D}
 such that  $\mathcal{F} D (0,0) = \id$.
 Then it is easy to see that the Maurer-Cartan form $(\mathcal{F} D)^{-1} 
 \d (\mathcal{F} D) = \hat{\mathcal U} \d u + \hat{\mathcal V} \d v $ can be computed
 as
\begin{equation}\label{eq:FDMaurer}
 \hat{\mathcal U} = 
 \begin{pmatrix} 
 \frac{\omega_u}{2} & m & e^{\omega/2 + \mathcal L}\\
 - Qe^{-\omega} & - \frac{\omega_u}{2} & 0\\
 0 & e^{\omega/2- \mathcal L}& 0
 \end{pmatrix}, \quad   
 \hat{\mathcal V} = 
 \begin{pmatrix} 
  - \frac{\omega_v}{2}
 & - R e^{-\omega}  & 0 \\
 \ell &   \frac{\omega_v}{2} & e^{\omega/2 + \mathcal L}\\
 e^{\omega/2- \mathcal L}& 0 & 0
\end{pmatrix}.
\end{equation}

 From the discussion in the previous section, 
 it is natural to introduce a family of Maurer-Cartan forms $\alpha^{\l}$
 for the Maurer-Cartan form $\alpha$ of the timelike Lagrangian surface 
 $f: M \to \CH$  as 
  \begin{equation}\label{eq:alphalambda-orig}
 \alpha^{\l} = U^{\lambda} \d u + V^{\lambda} \d v,
 \end{equation} 
 for $\lambda \in \C^{\times}$, where 
 $U^{\lambda}$ and $V^{\lambda}$ are  given by  

\begin{equation}\label{eq:UVlambda0}
U^{\lambda} =
 \begin{pmatrix} 
\frac{\omega_u}{2} & \l m &
 \lambda^{-1} e^{\omega/2+ \mathcal L}\\
-\lambda^{-1} Qe^{-\omega} & - \frac{\omega_u}{2} & 0\\
0 & \lambda^{-1}e^{\omega/2- \mathcal L}& 0
\end{pmatrix},   \;
V^{\lambda} =
 \begin{pmatrix} 
 - \frac{\omega_v}{2}
 & - \lambda R e^{-\omega}  & 0 \\
 \l^{-1} \ell &\frac{\omega_v}{2} & \lambda e^{\omega/2+ \mathcal L}\\
\lambda e^{\omega/2- \mathcal L}& 0 & 0
\end{pmatrix}. 
\end{equation}

Note that in this general situation we permit, opposite to the last subsection, $m \neq 0$ and 
$\ell \neq 0$.

 It is clear that $\alpha^{\lambda}|_{\lambda=1}$ is 
 the Maurer-Cartan form of the frame $\mathcal F D$ of $f$. 
 In the following theorem using the family of Maurer-Cartan forms $\alpha^{\lambda}$, we 
 characterize, when  a timelike  Lagrangian surface in $\CH$ actually is minimal.
\begin{Theorem}\label{thm:Charact}
 Let $f : \D \to \CH$ be a timelike Lagrangian surface in $\CH$.
 Then the following statements are equivalent$:$
\begin{enumerate}
\item $f$ is minimal.
\item The mean curvature $1$-form $L = \ell \d u + m \d v$ vanishes.
\item $\d + \alpha^{\lambda}$ gives a family of flat connections
 on $\D \times \U$. 
\end{enumerate}
Moreover, if any of these three statements above holds, then we have $Q_v =0$ and $R_u =0$.
\end{Theorem}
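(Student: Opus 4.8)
The strategy is to prove the equivalence of the three conditions by a spectral‑parameter expansion of the curvature of $\d+\alpha^{\lambda}$, and then to read off the auxiliary identities $Q_v=0$, $R_u=0$ as a by‑product. The equivalence (1)$\Leftrightarrow$(2) is already contained in Proposition \ref{prp:minicharact}, which identifies minimality of $f$ with the vanishing of the mean curvature vector $\mathfrak H$, hence with $\ell=m=0$, i.e. $L\equiv0$. So the real content is (2)$\Leftrightarrow$(3).

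For this I would decompose the coefficients in \eqref{eq:UVlambda0} according to powers of $\lambda$, writing $U^{\lambda}=\lambda^{-1}U_{-1}+U_0+\lambda U_1$ and $V^{\lambda}=\lambda^{-1}V_{-1}+V_0+\lambda V_1$. Here $U_1$ is the matrix whose only nonzero entry is $m$ in position $(1,2)$, and $V_{-1}$ is the matrix whose only nonzero entry is $\ell$ in position $(2,1)$; the diagonal blocks $U_0,V_0$ carry $\pm\omega_u/2$, $\pm\omega_v/2$, while the $\lambda^{-1}$‑block of $U$ and the $\lambda$‑block of $V$ carry the off‑diagonal data $Q,R$ and the exponentials $e^{\omega/2\pm\mathcal L}$. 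Flatness of $\d+\alpha^{\lambda}$ is the Maurer–Cartan equation $U^{\lambda}_v-V^{\lambda}_u+[V^{\lambda},U^{\lambda}]=0$, and since the derivative terms contribute only in degrees $-1,0,1$ while the bracket spans degrees $-2,\dots,2$, flatness for all $\lambda\in\C^{\times}$ is equivalent to the separate vanishing of the five coefficients of $\lambda^{-2},\dots,\lambda^{2}$. The decisive point is that the two extreme coefficients are purely algebraic: the $\lambda^{2}$‑coefficient equals $[V_1,U_1]$ and the $\lambda^{-2}$‑coefficient equals $[V_{-1},U_{-1}]$.

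To prove (3)$\Rightarrow$(2) I would simply evaluate these two brackets. A one‑line computation shows that $[V_1,U_1]$ has exactly two nonzero entries, namely $m\,e^{\omega/2-\mathcal L}$ in position $(3,2)$ and $-m\,e^{\omega/2+\mathcal L}$ in position $(1,3)$, while $[V_{-1},U_{-1}]$ has the two entries $\ell\,e^{\omega/2+\mathcal L}$ in position $(2,3)$ and $-\ell\,e^{\omega/2-\mathcal L}$ in position $(3,1)$. As the exponential factors never vanish, flatness in degrees $\pm2$ forces $m=0$ and $\ell=0$, that is $L\equiv0$, which is (2). For the converse (2)$\Rightarrow$(3) I would set $\ell=m=0$; then $\mathcal L$ is constant and may be normalized to $0$, so that $U_1=0$ and $V_{-1}=0$ and the degree $\pm2$ equations hold trivially. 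The three remaining equations are then exactly the Maurer–Cartan relations \eqref{eq:compatibility1}--\eqref{eq:compatibility3} of the coordinate frame with $\ell=m=0$: the $\lambda^{0}$‑coefficient reproduces the Tzitz\'eica equation $\omega_{uv}=e^{\omega}-QRe^{-2\omega}$, and the $\lambda^{\pm1}$‑coefficients reduce to $R_u=0$ and $Q_v=0$. Since these relations hold automatically for the genuine frame $\mathcal F$ of a minimal $f$, every coefficient vanishes and $\d+\alpha^{\lambda}$ is flat for all $\lambda$. Equivalently, and more conceptually, with $\ell=m=0$ the form $\alpha^{\lambda}$ is precisely $F^{-1}\d F$ for the gauged associated‑family frame $F=\hat{\mathcal F}G$ of \eqref{eq:Gauge}, which is a bona fide $\U$‑valued frame for every $\lambda\in\R_{>0}$ and hence flat there; flatness then propagates to all $\lambda\in\C^{\times}$ because the curvature is a Laurent polynomial in $\lambda$ vanishing on the infinite set $\R_{>0}$.

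The ``moreover'' clause now comes for free: once any of the three conditions holds we have $\ell=m=0$, and inserting this into \eqref{eq:compatibility3} leaves $Q_v e^{-2\omega}=0$ and $R_u e^{-2\omega}=0$, hence $Q_v=0$ and $R_u=0$. The only genuinely delicate step in the argument is the organization of the $\lambda$‑expansion and the verification that the derivative terms do not reach degrees $\pm2$; the computational heart is the evaluation of the two extreme commutators $[V_{\pm1},U_{\pm1}]$, which is what cleanly isolates $m$ and $\ell$ and thereby pins down minimality.
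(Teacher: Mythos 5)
Your proposal is correct and follows essentially the same route as the paper: both reduce the claim to expanding the zero-curvature equation $U^{\lambda}_v-V^{\lambda}_u+[V^{\lambda},U^{\lambda}]=0$ in powers of $\lambda$, recovering the structure equations \eqref{eq:compatibility1}--\eqref{eq:compatibility3} from the $\lambda$-independent part and forcing $\ell=m=0$ from the $\lambda$-dependent part (the paper records this as $(\lambda^{-1}-\lambda^{2})m=0$ and $(\lambda^{-2}-\lambda)\ell=0$, which is exactly your extreme commutators $[V_{\pm1},U_{\pm1}]$ combined with the degree $\mp1$ contributions of $\mathcal L_u=\ell$, $\mathcal L_v=m$). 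Your isolation of the purely algebraic degree $\pm2$ coefficients is a clean way to organize the same computation, and the ``moreover'' clause is handled identically.
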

\begin{proof}
 The equivalence $(1) \Leftrightarrow (2)$ follows from Proposition 
 \ref{prp:minicharact}.
 Let us compute the flatness of $\d + \alpha^{\lambda}$.
 In terms of $U^{\lambda}$ and $V^{\lambda}$, it is equivalent with
 $U^{\lambda}_v - V^{\lambda}_u + [V^{\lambda}, U^{\lambda}]=0$, and
 a straightforward computation shows that this is equivalent with 
 the following equations:
\begin{align*}
& \omega_{uv} - e^{\omega} + Q Re^{-2 \omega} -m \ell=0, 
 \\
& (m e^{-\omega})_v +  R_u e^{-2\omega} =0, \quad 
 (\ell e^{-\omega})_u + Q_v e^{-2\omega} =0, \\
& (\lambda^{-1} - \lambda^{2}) m=0, \quad (\lambda^{-2} - \lambda) \ell =0.
\end{align*}
 The first three equations are just \eqref{eq:compatibility1} and \eqref{eq:compatibility3}, respectively. 
 The remaining two  equations are satisfied 
 for  all $\lambda \in \C^{\times}$ if and only if 
 $m=\ell=0$ and this is equivalent with that 
 $f$ is minimal.
 This completes the proof.
\end{proof}
\begin{Remark}
 The choices of $U^{\l}$ and $V^{\l}$ in \eqref{eq:UVlambda} 
 are natural  in view of  the quasi $6$-symmetric  space (induced by the 
 order $6$ automorphism $\hat \sigma$) in Section \ref{sc:Gauss}. 
 The Maurer-Cartan form $\alpha$ can be 
 decomposed into the eigenspaces of the order $6$ automorphism $\hat \sigma$, 
 and the $j$-th degree of the spectral parameter $\l$ in the Maurer-Cartan form 
 corresponds to the $j$-th eigenspace. This will be explained in more detail in
 Section \ref{subsc:Cha}.
\end{Remark}
We have thus found by two different approaches to the same restricted matrices depending on $\lambda$ and it is clear that for the minimal  timelike Lagrangian case it suffices to consider matrices of the type
\begin{gather}
\label{eq:alphalambda}
 \alpha^{\l} = U^{\l} \d u + V^{\l} \d v, 
\intertext{with}
\label{eq:UVlambda}
U^{\lambda} =
 \begin{pmatrix} 
\frac{\omega_u}{2} & 0&
 \lambda^{-1} e^{\omega/2}\\
-\lambda^{-1} Qe^{- \omega} &  - \frac{\omega_u}{2} & 0\\
0 & \lambda^{-1}e^{\omega/2}& 0
\end{pmatrix},   \quad
V^{\lambda} =
 \begin{pmatrix} 
 - \frac{\omega_v}{2}
 & - \lambda R e^{-\omega}  & 0 \\
 0 & \frac{\omega_v}{2} & \lambda e^{\omega/2}\\
\lambda e^{\omega/2}& 0 & 0
\end{pmatrix}
\end{gather}
 and
 $Q_v = 0, R_u = 0$ and  $\omega_{uv} - e^{\omega} + Q Re^{-2 \omega}=0$.
\begin{Corollary}
 Let $Q$, $R$ be purely imaginary functions and $\omega$ be a
 real function. Moreover define $U^{\l}$ and $V^{\l}$ as in \eqref{eq:UVlambda} 
 and $\alpha^{\l} = U^{\l} \d u + V^{\l} \d v$, and assume $\d \alpha^{\l} +
 \alpha^{\l} \wedge \alpha^{\l} =0$ for $\l \in \C^{\times}$. Then 
 $Q_v = 0$, $R_u=0$ and 
 $\omega_{uv} - e^{\omega} + Q Re^{-2 \omega}=0$, and there 
 exists a family of timelike minimal Lagrangian surfaces $\{f^{\l}\}_{\l \in \R^{\times}}$
 in $\CH$ with the cubic differential $ C = \l^{-3} Q\, \d u +\l^3 R\, \d v$ and the induced metric 
 $\d s^2 = 2 e^{\omega} \d u \d v$. 
\end{Corollary}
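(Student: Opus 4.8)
The plan is to regard this corollary as the existence (converse) counterpart of Theorem \ref{thm:Charact}, and to deduce it from the fundamental theorem, Theorem \ref{thm:fund}, after undoing the spectral parameter by the constant gauge transformation already used in \eqref{eq:Gauge}. The whole argument rests on the observation that the explicit powers of $\l$ decorating the entries of $U^{\l}$ and $V^{\l}$ in \eqref{eq:UVlambda} are precisely those produced by conjugating a genuine coordinate-frame Maurer--Cartan form by a fixed diagonal matrix.

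First I would fix $\l \in \R^{\times}$ and set $G = \di(\l, \l^{-1}, 1)$ as in \eqref{eq:Gauge}. One checks that $G \in \U$, since $\r{P}\,\bar G^{T}\,\r{P} = G^{-1}$ and $\det G = 1$, so that conjugation by $G$ preserves the reality condition \eqref{eq:SU21} as well as flatness. A direct entrywise computation then shows that $\hat\alpha := G\,\alpha^{\l}\,G^{-1}$ is exactly of the coordinate-frame shape \eqref{eq:U}--\eqref{eq:V}, with the same $\omega$, with $\ell = m = 0$, and with $Q$, $R$ replaced by the purely imaginary functions $\l^{-3}Q$ and $\l^{3}R$; the diagonal $G$ absorbs all the powers of $\l$.

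Since $G$ is constant, $\hat\alpha$ is flat whenever $\alpha^{\l}$ is, and the flatness of a connection in coordinate-frame form is, by the computation preceding Theorem \ref{thm:fund}, equivalent to the compatibility system \eqref{eq:compatibility1}, \eqref{eq:compatibility2}, \eqref{eq:compatibility3} for the data $(\omega, \l^{-3}Q, \l^{3}R, 0, 0)$. With $\ell = m = 0$ equation \eqref{eq:compatibility2} is vacuous, \eqref{eq:compatibility3} collapses to $Q_{v} = 0$ and $R_{u} = 0$, and in \eqref{eq:compatibility1} the product $(\l^{-3}Q)(\l^{3}R) = QR$ cancels the spectral parameter, leaving $\omega_{uv} - e^{\omega} + QR\,e^{-2\omega} = 0$. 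This already yields the three asserted equations; a single value of $\l$ suffices, so the hypothesis that $\d\alpha^{\l} + \alpha^{\l}\wedge\alpha^{\l} = 0$ for all $\l \in \C^{\times}$ is more than enough.

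For the surfaces I would now invoke the converse half of Theorem \ref{thm:fund}, applied to the data $(\omega, \l^{-3}Q, \l^{3}R, 0, 0)$, which is purely imaginary in $Q,R$ and satisfies the compatibility system just verified. It produces a horizontal lift $\hat \f^{\l}$ integrating $\hat\alpha$ and hence a timelike Lagrangian immersion $f^{\l} = \pi \circ \hat \f^{\l}$ whose induced metric is $\d s^{2} = 2e^{\omega}\,\d u\,\d v$, whose cubic differential is $C^{\l} = \l^{-3}Q\,\d u^{3} + \l^{3}R\,\d v^{3}$, and whose mean curvature $1$-form is $L^{\l} = \ell\,\d u + m\,\d v = 0$; by Proposition \ref{prp:minicharact} the vanishing of $L^{\l}$ is equivalent to minimality, so each $f^{\l}$ is a timelike minimal Lagrangian surface and letting $\l$ range over $\R^{\times}$ gives the family. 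The only point requiring care is the range of $\l$: the gauge $G$ lies in $\U$ only for real $\l$, which is exactly why the surfaces are obtained for $\l \in \R^{\times}$ while the zero-curvature hypothesis is stated over all of $\C^{\times}$. There is no genuine analytic obstacle here; the essential content is the bookkeeping that the constant diagonal gauge turns the spectral-parameter connection $\alpha^{\l}$ back into a bona fide coordinate-frame Maurer--Cartan form to which Theorem \ref{thm:fund} directly applies.
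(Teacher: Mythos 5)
Your proposal is correct and follows essentially the same route the paper takes (the paper leaves the Corollary's proof implicit, but it rests on exactly the ingredients you use: the gauge $G=\di(\l,\l^{-1},1)$ from \eqref{eq:Gauge} relating $\alpha^{\l}$ to the coordinate-frame form with $Q,R$ replaced by $\l^{-3}Q,\l^{3}R$, the flatness computation of Theorem \ref{thm:Charact}, and the converse half of Theorem \ref{thm:fund} together with Proposition \ref{prp:minicharact}). Your extra observations --- that one value of $\l$ already forces the three equations because the $\l^{-1}$, $\l^{0}$, $\l^{1}$ graded pieces occupy disjoint matrix entries, and that $G\in\U$ only for real $\l$, which explains the restriction to $\l\in\R^{\times}$ --- are accurate and consistent with the paper.
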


\begin{Definition}
 The solution of $(F^{\lambda})^{-1} \d F^{\lambda} = \alpha^{\lambda}$ 
 defined in \eqref{eq:alphalambda} with  $U^\lambda$ and $V^\lambda$ 
 as in \eqref{eq:UVlambda} and with initial condition
 $F^{\lambda}(0, 0) = \id$ will be called the {\it extended frame}
 of a timelike minimal Lagrangian surface $f$. The associated 
 family $\{f^{\lambda}\}_{\lambda \in \R^{\times}}$ is defined 
 by $F^{\lambda} e_3|_{\lambda \in \R^{\times}}$ with $e_3 =(0, 0, 1)^T$.
\end{Definition}

\begin{Example}[Real projective space]
 Let $Q = R =0$, and  
 $\omega= 2 \log \left( \frac{2}{-2 + u v}\right)$.
 Then it is easy to see that $\omega$ is a
 solution of  the Tzitzeica equation 
 $\omega_{u v}= e^{\omega}- Q R e^{-2 \omega} = e^\omega$.
 Then the extended frame $F^{\l}$ can be explicitly obtained as
\[
  F^{\l} = \exp (\l^{-1} u N_+)  \begin{pmatrix} e^{-\omega/2} & 0 &0  
 \\0 & e^{\omega/2}& 0\\0 &0 &1 \end{pmatrix}\exp (\l v N_-) , \quad 
 N_+ = \begin{pmatrix} 0 & 0 & 1 \\ 0 & 0 & 0 \\
 0 & 1 & 0 \end{pmatrix}, \quad 
 N_- = - N_+^T.
\]
 Then the horizontal lift $\f = \f^{\l}|_{\l =1}$ 
 can be computed explicitly as 
\[
 \f =\frac{1}{2 -u v} \begin{pmatrix} 2u \\ 2 v \\ 2 + u v \end{pmatrix}.
\]
 Clearly $\f$ is the anti-de Sitter sphere $H^2_1$ in $\R^{3}_2$.
 And the immersion $f = \pi \circ \f$ is a part of the
 indefinite real projective space in $\CH$.
 \end{Example}

\begin{Example}[Clifford type cylinder]
 Let $Q = - R  = \sqrt{-1}$, then  $\omega = 0$ is a solution of 
 the Tzitzeica equation $\omega_{u v}= e^{\omega}- Q R e^{-2 \omega}$.
 Then the coefficient matrices of the Maurer-Cartan form of 
 $\alpha = U^{\lambda}_{vac} \d u + V^{\l}_{vac} \d v$ are constant and the equation 
 $(F^{\l})^{-1} \d F^{\l} = \alpha^\lambda$
  can be integrated directly. We obtain  $ F^{\l} = \exp(U^{\lambda}_{vac} u + 
 V^{\l}_{vac} v) $
 with
\[
 U^{\lambda}_{vac} = 
\l^{-1} 
\begin{pmatrix} 
0 & 0 & 1 \\ 
- \sqrt{-1} & 0 &0  \\
0 &1 &0 
\end{pmatrix}, \quad 
 V^{\lambda}_{vac} = \l 
\begin{pmatrix} 
0 & \sqrt{-1} & 0 \\ 
0 & 0 &1  \\
1 &0 &0 
\end{pmatrix}.
\]
 Clearly $F$ is the extended frame of some timelike minimal Lagrangian immersion and a
  direct computation shows that 
 the horizontal lift $\f = \f^{\l}|_{\l =1} = Fe_3|_{\l =1}$
 can be computed as 
\[
 \f = F_0 \f_0,
\]
where with $\delta = e^{ 2 \pi \sqrt{-1} /3}$ we have 
\[
\f_0 = 
 \frac{1}{\sqrt{3}}
\begin{pmatrix}
 e^{\sqrt{-1}( \delta u - \delta^2 v)}\\
 -e^{\sqrt{-1}( \delta^2 u - \delta v)} \\
e^{\sqrt{-1}( u - v)}
\end{pmatrix}, 
\quad \mbox{and}\quad
F_0=
\frac{1}{\sqrt{3}}
\begin{pmatrix} 
 - \sqrt{-1} \delta^2 & \sqrt{-1} \delta & -\sqrt{-1} \\
 \sqrt{-1} \delta & -\sqrt{-1} \delta^2 & \sqrt{-1} \\
 1  & -1  & 1
\end{pmatrix}.
\]
 Then an another direct computation shows that $F_0 \in \U$ 
 and $\langle \f_0, \f_0 \rangle=-1$. The timelike surface 
 $\f_0$ is an analogue 
 of the Clifford torus in $\mathbb {CP}^2$, see for example 
\cite{DM}. Let us consider the curves $v = -u+ a\; (a \in \R)$ , where $u$ and $v$ denote 
 on null coordinates.
 Then 
\[
 \f_0|_{v=-u+a} = \frac{1}{\sqrt{3}}( e^{-\sqrt{-1} u - \delta^2 a}, 
 -e^{-\sqrt{-1} u - \delta a}, e^{2 \sqrt{-1} u-a})^T. 
\]
 Therefore the surface close up, and 
 $\f_0$ becomes a cylinder.
 \end{Example}

%%%%%%%%%%%%%%%%%%%%%%%%
\section{Legendrian lifts of general timelike Lagrangian immersions into $\CH$}
\label{sc:Lift}
 We have so far only considered timelike minimal Lagrangian immersions 
 from contractible open domains in $\R^2$ into $\CH$.
 In the literature, usually immersions are defined on arbitrary 
 Lorentz surfaces (or, more generally, on any  real surface of dimension 
 two).

 Thus, when considering a timelike Lagrangian immersion 
 $f:M \rightarrow \CH$, the question comes up whether there always 
 exists a Legendrian lift $\f: M \rightarrow H^5_3$.
 As a consequence, the question arises, in what sense, if any, a 
 timelike Legendrian immersion is naturally associated with 
 a given timelike Lagrangian immersion.

 At one hand, for any contractible open subset $U$ of $M$ such 
 a lift exists, by what was discussed in the beginning of this paper.
 So the main question is, in what sense a ``global'' lift exists.
 In this section we will show that either a given 
 timelike Lagrangian immersion $f:M \rightarrow \CH$ already 
 has a global 
 timelike Legendrian lift $\f:M \rightarrow H^5_3$, or there exists a threefold 
 cover $\hat{M}$ of $M$, such that the natural lift of $f$ to $\hat{f}: 
 \hat{M} : \rightarrow \CH$
 has a timelike Legendrian lift 
 $\hat{\f}:\hat{M} \rightarrow \CH$ of $\hat{f}$.
\begin{Definition}
 A Lagrangian map $f:M \rightarrow \CH$ is called \textit{liftable}, 
 if there exists some Legendrian map
 $\f: M \rightarrow H^5_3$ such that $f = \pi \circ \f$.
\end{Definition}
%%%%%%%%%%%%%%%%%
\subsection{The basic transformation formula for horizontal lifts and frames}
 Now let $M$ be a Lorentz surface and let  $f:M \rightarrow \CH$ 
 be a Lagrangian immersion. It is known,  see for example 
 \cite[Section 3.2]{TWeinstein}, that the universal cover of $M$ 
 is diffeomorphic to $\R^2$, where the Lorentz metric, however, 
 is not known, in general.
  For simplicity we will thus assume without loss of generality that the universal cover actually is equal to $\R^2$ and denote it by $\D$.

 By $ \tilde{\pi} : \D \rightarrow M$  we denote the universal covering 
 of $M$.  Then we infer that $\D$ is contractible and 
 $\tilde{f} : \D \rightarrow \CH, p \mapsto f \circ \tilde{\pi}(p),$ 
 is again a Lagrangian immersion.
 Moreover, by what was discussed previously,  $\tilde{f}$ admits a 
 global horizontal lift  $\tilde{\f}: \D \rightarrow H^5_3$. 
 Now it is easy to derive the following
 \begin{Proposition}
 We retain the notation and the assumptions just made above.
 Let $\pi_1(M)$ denote the fundamental group of $M$, considered as a group of  
 Deck transformations acting on $\D$.
  Then for $\gamma \in \pi_1(M)$ we obtain$:$
\begin{enumerate}
 \item $\gamma^*\tilde{\f}: \D \rightarrow H^5_3$ is another global horizontal 
  lift of $\tilde{f}$. 
  
\item  There exists some uniquely determined 
  scalar $\tilde{c}(\gamma) \in S^1$ satisfying 
  \begin{equation}
  \gamma^*\tilde{\f} = \tilde{c}(\gamma)\tilde{\f},
  \end{equation}
   
\item  The map
 $\tilde{c}: \pi_1 (M) \rightarrow S^1, \gamma \mapsto \tilde{c}(\gamma),$
  is a (well defined) homomorphism.
\end{enumerate}
\end{Proposition}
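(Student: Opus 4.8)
The plan is to exploit the fact, established in the preceding Lemma, that any two global horizontal lifts of $\tilde f$ differ by a constant factor from $S^1$, and to transport this rigidity across the Deck transformation action. I would proceed in three steps matching the three parts of the statement.

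For part $(1)$, I would first observe that pullback by a Deck transformation preserves all the defining structure. Since $\gamma$ is a Deck transformation, $f \circ \tilde\pi \circ \gamma = f \circ \tilde\pi = \tilde f$, so $\gamma^* \tilde\f = \tilde\f \circ \gamma$ is again a map $\D \to H^5_3$ that projects under $\pi$ to $\tilde f$. The horizontality condition $\langle \d \tilde\f, \tilde\f\rangle = 0$ from \eqref{eq:horizontal} is a pointwise tensorial identity, so it is preserved under precomposition with the diffeomorphism $\gamma$; hence $\gamma^* \tilde\f$ again satisfies \eqref{eq:horizontal} and is a global horizontal lift of $\tilde f$.

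For part $(2)$, I would invoke the uniqueness clause of the Lemma on horizontal lifts: two horizontal lifts of the \emph{same} Lagrangian map $\tilde f$ differ by a constant factor from $S^1$. Applying this to the two lifts $\tilde\f$ and $\gamma^* \tilde\f$ yields a unique scalar $\tilde c(\gamma) \in S^1$ with $\gamma^* \tilde\f = \tilde c(\gamma)\, \tilde\f$; uniqueness is immediate since $\tilde\f$ is nowhere zero (it takes values in $H^5_3$). For part $(3)$, the homomorphism property is a formal consequence of functoriality of pullback. For $\gamma_1, \gamma_2 \in \pi_1(M)$ one computes $(\gamma_1 \gamma_2)^* \tilde\f = \gamma_2^*(\gamma_1^* \tilde\f) = \gamma_2^*(\tilde c(\gamma_1)\tilde\f) = \tilde c(\gamma_1)\, \gamma_2^* \tilde\f = \tilde c(\gamma_1)\tilde c(\gamma_2)\, \tilde\f$, where the constant $\tilde c(\gamma_1)$ passes through the pullback precisely because it is constant. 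Comparing with $(\gamma_1\gamma_2)^*\tilde\f = \tilde c(\gamma_1\gamma_2)\tilde\f$ and using that $\tilde\f \neq 0$ gives $\tilde c(\gamma_1 \gamma_2) = \tilde c(\gamma_1)\tilde c(\gamma_2)$.

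I do not expect any serious obstacle here: the entire argument is a routine application of the uniqueness of horizontal lifts together with the functoriality of pullback, and the only point requiring slight care is fixing the convention for the group action (whether the action is on the left or the right) so that the order of composition in $(\gamma_1\gamma_2)^* = \gamma_2^* \gamma_1^*$ is consistent with the stated homomorphism property rather than an anti-homomorphism. If the convention produced an anti-homomorphism, one would simply pass to $\tilde c(\gamma)^{-1}$ or adjust the definition of the action; since $S^1$ is abelian this distinction is in fact immaterial for the conclusion.
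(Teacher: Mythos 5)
Your proposal is correct and follows essentially the same route as the paper: pullback preserves the horizontal-lift property, the uniqueness-up-to-$S^1$ clause of the earlier Lemma produces $\tilde c(\gamma)$, and that same uniqueness (plus constancy of the scalar) gives the homomorphism property. The only cosmetic difference is that the paper justifies part (1) by citing the explicit null-coordinate form of Deck transformations of Lorentz surfaces (Kulkarni), whereas you observe more simply that the condition $\langle \d\tilde{\f},\tilde{\f}\rangle=0$ is preserved under pullback by any diffeomorphism covering the identity on $M$ --- both are fine here.
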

\begin{proof}
 Since  $\gamma \in \pi_1(M)$ acts on $\D$ as
\begin{equation}\label{eq:gamma}
 \gamma : (u, v) \to (x, y), \quad x = \gamma^x (u), \quad y =\gamma^y (v),
\end{equation}
 for some strictly increasing one variable functions $\gamma^x, \gamma^y$, 
 see  \cite{Kul},
 it follows 
 by a straightforward computation  that 
 the map $\gamma^*\tilde{\f}: \D \rightarrow H^5_3$ is another global horizontal 
 lift of $\tilde{f}$. Therefore $\gamma^*\tilde{\f} = \tilde{c}(\gamma)\tilde{\f}$ 
 with some uniquely determined  scalar $\tilde{c}(\gamma)$ in $S^1$. 
 By the uniqueness statement,  it follows that 
 $\tilde{c}: \pi_1(M) \rightarrow S^1$ is a (well defined) homomorphism.
\end{proof}
 Next we recall from \eqref{eq:coordinateframe} the definition of the 
 ``natural coordinate frame'' for a Lagrangian immersion and apply it to $\tilde \f$:
 \begin{equation}\label{eq:tcoordinateframe}
 \widetilde{\mathcal F} = \left( e^{-\omega/2}\tilde{\f}_u, 
 e^{-\omega/2} \tilde{\f}_v, \tilde{\f}\right),
\end{equation}
  where $(u, v)$ is a null coordinate system  on $\D$.
 We know that $\widetilde{\mathcal F}$ takes values in 
 $\U$.
 Then $|\det \widetilde{\mathcal{F}}|^2 = 1$ and 
 $\widetilde{\mathcal{F}} \in \U = S^1 \cdot \SU $ follows.
By \eqref{eq:gamma}
\[
 e^{\tilde{\omega}} \d u \d v = \gamma^*( e^{\tilde{\omega}} \d u \d  v) = 
(\gamma^x_u \gamma^y_v) e^{\gamma^*{\tilde{\omega} }}\d u \d  v
\]
and 
\[
 \tilde{c}(\gamma) \tilde{\f}_u (u, v) =  (\gamma^*\tilde{\f})_u (u, v)  
 = \tilde{\f}_u (\gamma^x(u), \gamma^y(v))  \cdot \gamma^x_u (u), 
\]
the frame defined by (\ref{eq:tcoordinateframe}) for $\gamma^*\tilde{\f}$ yields
\begin{equation} \label{transFF2}
\gamma^*\widetilde{\mathcal{F}} = 
\tilde{c}(\gamma) \widetilde{\mathcal{F}} k, \quad 
 \mbox{where} \quad k = \di \left( \sqrt{ (\gamma^x_u)^{-1} \gamma^y_v }, 
 \sqrt{\gamma^x_u (\gamma^y_v)^{-1} },1\right).
\end{equation}
\begin{Corollary}
 If $f$ is minimal Lagrangian, then the homomorphism 
 $\tilde{c}$ satisfies $\tilde{c}(\gamma)^3 = 1$ for all $\gamma \in \pi_1(M)$.
\end{Corollary}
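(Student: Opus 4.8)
The plan is to take determinants in the transformation formula \eqref{transFF2} and to exploit that minimality forces the determinant of the coordinate frame to be constant.

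First I would observe that, because $\tilde f = f \circ \tilde\pi$ and the covering map $\tilde\pi$ is a local diffeomorphism, minimality is a local property that passes to $\tilde f$; hence $\tilde f$ is timelike minimal Lagrangian whenever $f$ is. Applying Proposition \ref{prp:minicharact} to $\tilde f$ and its coordinate frame $\widetilde{\mathcal F}$ then yields that $\det \widetilde{\mathcal F}$ is constant. Since $\widetilde{\mathcal F}$ takes values in $\U = S^1 \cdot \SU$, this constant, call it $d$, lies in $S^1$ and is in particular nonzero.

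Next I would record the elementary fact that $\det k = 1$. Indeed, as $\gamma^x, \gamma^y$ are strictly increasing we have $\gamma^x_u, \gamma^y_v > 0$, and
\[
 \det k = \sqrt{(\gamma^x_u)^{-1}\gamma^y_v}\,\cdot\,\sqrt{\gamma^x_u (\gamma^y_v)^{-1}}\,\cdot\,1 = \sqrt{(\gamma^x_u)^{-1}\gamma^y_v\,\gamma^x_u (\gamma^y_v)^{-1}} = 1.
\]
Then I would apply $\det$ to both sides of \eqref{transFF2}. On the left-hand side, $\gamma^*\widetilde{\mathcal F}$ is the pullback of the frame field, so $\det(\gamma^*\widetilde{\mathcal F})(p) = \det\widetilde{\mathcal F}(\gamma(p)) = d$, using once more that $\det\widetilde{\mathcal F} \equiv d$ is constant. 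On the right-hand side, since $\tilde c(\gamma) \in S^1$ is a scalar multiplying a $3 \times 3$ matrix,
\[
 \det\!\bigl(\tilde c(\gamma)\,\widetilde{\mathcal F}\,k\bigr) = \tilde c(\gamma)^3 \,\det\widetilde{\mathcal F}\,\det k = \tilde c(\gamma)^3\, d.
\]
Equating the two expressions and cancelling the nonzero factor $d$ gives $\tilde c(\gamma)^3 = 1$, as required.

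The computation is essentially routine, so there is no serious obstacle; the one point deserving care is the bookkeeping of what $\gamma^*\widetilde{\mathcal F}$ denotes, namely the pullback of the frame field, whose determinant therefore inherits the constant value $d$, together with the indispensable appeal to Proposition \ref{prp:minicharact} to guarantee that this determinant is constant precisely in the minimal case. Without minimality, $\det\widetilde{\mathcal F}$ need not be constant and the identity $\det(\gamma^*\widetilde{\mathcal F})(p) = \det\widetilde{\mathcal F}(p)$ would fail, so this hypothesis is genuinely used.
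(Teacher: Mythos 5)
Your argument is correct and is exactly the paper's proof, just written out in full: the paper's own (one-line) justification is precisely that $\det k = 1$ and that minimality makes $\det\widetilde{\mathcal F}$ constant, so taking determinants in \eqref{transFF2} yields $\tilde c(\gamma)^3 = 1$. Your additional care about why $\det\widetilde{\mathcal F}$ is constant (via Proposition \ref{prp:minicharact}) and why the pullback has the same constant determinant fills in details the paper leaves implicit, but the route is the same.
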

\begin{proof}
Since $\det k = 1$ it suffices to note that under our assumptions the determinant of 
$\det \widetilde{\mathcal{F}}$ is constant.
\end{proof}

%%%%%%%%%%%%
\begin{Theorem}
 Let $f: M \rightarrow \CH$ be a timelike minimal Lagrangian immersion 
 and let $\tilde{c}$ be the homomorphism defined above. 
 Moreover let $F$ be the family of frames defined in \eqref{eq:Gauge}.
 Then  we obtain for all $\gamma \in \pi_1 (M):$
\begin{align} \label{transFF}
\gamma^*\widetilde{\mathcal F} &= 
\tilde{c}(\gamma) \widetilde{\mathcal F} k, \quad  \mbox{where
$k =  \di \left( \sqrt{ (\gamma^x_u)^{-1} \gamma^y_v }, 
 \sqrt{\gamma^x_u (\gamma^y_v)^{-1} },1\right)$,}\\
\label{transframe}
\gamma^*F &= 
\tilde{c}(\gamma) F k, \quad \mbox{for $k$ as above.} 
\end{align}
 Moreover, the  homomorphism  $\tilde{c}$ 
 can be considered as a homomorphism into the group 
 consisting of three elements:
$\tilde{c}: \pi_1(M) \rightarrow \mathbb{X}_3$,  where $ \mathbb{X}_3 = \{ 1, \delta, 
 \delta^2\}$ with $\delta = e^{2 \pi \sqrt{-1}/3}.$
\end{Theorem}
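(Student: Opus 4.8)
The plan is to treat the three assertions separately, since the first and third are essentially already available. The identity \eqref{transFF} for $\widetilde{\mathcal F}$ is nothing but the transformation rule \eqref{transFF2} established just above for the natural coordinate frame of the horizontal lift $\tilde{\f}$, so for this part I would simply record that \eqref{transFF} \emph{is} \eqref{transFF2}. For the final assertion, that $\tilde c$ takes values in $\mathbb{X}_3 = \{1,\delta,\delta^2\}$, I would invoke the Corollary above: since $f$ is minimal Lagrangian, $\det\widetilde{\mathcal F}$ is constant, and taking determinants in \eqref{transFF} together with $\det k = 1$ yields $\tilde c(\gamma)^3 = 1$ for every $\gamma$. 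Hence each $\tilde c(\gamma)$ is a cube root of unity, so the homomorphism $\tilde c:\pi_1(M)\to S^1$ factors through $\mathbb{X}_3$.

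The substantive step is \eqref{transframe}. Here I would start from the definition $F = \hat{\mathcal F}G$ in \eqref{eq:Gauge}, where $G = \di(\lambda,\lambda^{-1},1)$ is constant in $(u,v)$, so that $\gamma^*F = (\gamma^*\hat{\mathcal F})G$. The first task is to show that the associated-family frame $\hat{\mathcal F}$ obeys the same transformation law as $\widetilde{\mathcal F}$, namely $\gamma^*\hat{\mathcal F} = \tilde c(\gamma)\,\hat{\mathcal F}\,k$, with the \emph{same} scalar $\tilde c(\gamma)$ and the \emph{same} diagonal factor $k$. Since $\hat{\mathcal F}$ is the coordinate frame of the Legendre lift $\hat{\f}$ of an associated-family member, built from the same conformal factor $\omega$ and from the cubic differential $C^\lambda = \lambda^{-3}Q\,\d u^3 + \lambda^3 R\,\d v^3$, which is again invariant under the Deck action because $C$ is, I would rerun the derivation of \eqref{transFF2}: the Deck transformation $\gamma$ carries $\hat{\f}$ to another horizontal lift of the same invariant data, hence multiplies it by an $S^1$-scalar, while the chain-rule factors $\gamma^x_u,\gamma^y_v$ produce exactly the diagonal $k$ appearing in \eqref{transFF}.

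Granting this, the computation closes cleanly, and the key algebraic observation is that both $k$ and $G$ are \emph{diagonal}, hence commute:
\[
 \gamma^*F = (\gamma^*\hat{\mathcal F})\,G = \tilde c(\gamma)\,\hat{\mathcal F}\,k\,G = \tilde c(\gamma)\,\hat{\mathcal F}\,G\,k = \tilde c(\gamma)\,F\,k,
\]
which is \eqref{transframe}. I expect the main obstacle to be the first task of the previous paragraph, namely verifying that inserting the spectral parameter (equivalently, scaling $Q,R$ by $\lambda^{\mp 3}$ and gauging by $G$) does not alter the monodromy scalar, so that the single homomorphism $\tilde c$ may be used uniformly in $\lambda$; this is exactly where the spectral-parameter bookkeeping must be controlled, and I would anchor it at $\lambda=1$, where $\hat{\mathcal F}$ reduces to $\widetilde{\mathcal F}$ up to a constant left factor and the scalar is manifestly $\tilde c(\gamma)$. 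Once that point is secured, the fact that the spectral gauge $G$ and the coordinate-change gauge $k$ are simultaneously diagonal, and therefore commute, is what lets the \emph{same} $k$ and the \emph{same} $\tilde c(\gamma)$ serve both \eqref{transFF} and \eqref{transframe}.
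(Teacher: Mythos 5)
The paper states this theorem without any proof, so the only available comparison is with what the surrounding text clearly intends. For \eqref{transFF} and for the final claim you are on target and match that intent exactly: \eqref{transFF} is the displayed identity \eqref{transFF2} derived immediately before the theorem, and the assertion that $\tilde c$ lands in $\mathbb{X}_3$ is precisely the preceding Corollary (constancy of $\det \widetilde{\mathcal F}$ in the minimal case together with $\det k=1$), combined with the fact that $\tilde c$ is a homomorphism into $S^1$. Nothing more is needed for those two parts.

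The gap is in your argument for \eqref{transframe}, and it sits exactly at the step you yourself flag as the main obstacle. You justify $\gamma^*\hat{\mathcal F}=\tilde c(\gamma)\,\hat{\mathcal F}\,k$ by saying that $\gamma$ carries $\hat{\f}^{\lambda}$ to ``another horizontal lift of the same invariant data, hence multiplies it by an $S^1$-scalar.'' But the uniqueness-up-to-$S^1$ statement applies only to two horizontal lifts of the \emph{same} map $\D\rightarrow\CH$. For $\lambda\neq 1$ the associated-family member $\hat f^{\lambda}$ need not be $\gamma$-invariant: $\hat f^{\lambda}\circ\gamma$ and $\hat f^{\lambda}$ merely induce the same data $(\omega, C^{\lambda},0)$ and therefore differ by an ambient isometry, so all one obtains is $\gamma^*F = M_{\gamma}(\lambda)\,F\,k$ with a $(u,v)$-independent matrix $M_{\gamma}(\lambda)$ that equals $\tilde c(\gamma)\id$ at $\lambda=1$ but is a priori non-scalar otherwise. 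This is not a removable technicality: in the paper's own Clifford-type cylinder example the Deck translation $(u,v)\mapsto(u+2\pi,\,v-2\pi)$ produces $M_{\gamma}(\lambda)=\exp\bigl(2\pi(U^{\lambda}_{vac}-V^{\lambda}_{vac})\bigr)$, and since $V^{\lambda}_{vac}=(U^{\lambda}_{vac})^{-1}$ with $(U^{\lambda}_{vac})^3=-\sqrt{-1}\,\lambda^{-3}\id$, its eigenvalues have pairwise distinct absolute values for real $\lambda\neq 1$, so $M_{\gamma}(\lambda)$ is not a scalar matrix. Your anchoring at $\lambda=1$ therefore proves \eqref{transframe} only at $\lambda=1$ (where the diagonal-commutation step does close the argument cleanly), and no continuity or discreteness argument can propagate the scalar form to other values of $\lambda$. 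To make the second displayed identity rigorous one must either restrict it to $\lambda=1$ or restate it with a $\lambda$-dependent monodromy matrix in place of the character $\tilde c(\gamma)$.
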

%%%%%%%%%%%%%%%%%%%%%
\subsection{The main theorem about global horizontal lifts }
Using the results just obtained we are able now to clarify the relation between 
timelike minimal Lagrangian surfaces into $\CH$ and timelike minimal Legendrian surfaces into $H^5_3$.
\begin{Theorem}
 Let $M$ be a Lorentz surface and let  $f:M \rightarrow \CH$ 
 be a timelike minimal Lagrangian immersion.
Then either $f$ admits a global horizontal lift $\f: M \rightarrow H^5_3$ or otherwise  
 there exists a Lorentz surface $\hat{M}$ and a threefold covering 
 $\hat{\pi}_f :\hat{M} \rightarrow M$ 
 such that $\hat{f} = f \circ \hat{\pi}_f$ is a timelike minimal Lagrangian immersion  which admits a global horizontal lift $\hat{\f} : \hat{M} \rightarrow H^5_3$.
\end{Theorem}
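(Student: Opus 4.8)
The plan is to reduce everything to the homomorphism $\tilde{c} : \pi_1(M) \rightarrow \mathbb{X}_3$ produced in the previous theorem, and to observe that the existence of a global horizontal lift is exactly the triviality of $\tilde{c}$. First I would record the following equivalence: a global horizontal lift $\f : M \rightarrow H^5_3$ of $f$ exists if and only if $\tilde{c} \equiv 1$. Indeed, any such $\f$ pulls back along the universal covering $\tilde{\pi} : \D \rightarrow M$ to a horizontal lift of $\tilde{f}$, which by the uniqueness-up-to-$S^1$ statement equals $\tilde{\f}$ after absorbing a constant factor; invariance of $\f \circ \tilde{\pi} = \tilde{\f}$ under the deck group then forces $\gamma^*\tilde{\f} = \tilde{\f}$, that is, $\tilde{c}(\gamma) = 1$ for all $\gamma \in \pi_1(M)$. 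Conversely, if $\tilde{c} \equiv 1$ then $\tilde{\f}$ is $\pi_1(M)$-invariant and descends to a map $\f : M \rightarrow H^5_3$ which is the desired global horizontal lift. This settles the first alternative.

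If $\tilde{c}$ is not trivial, I would exploit the fact that its image lies in $\mathbb{X}_3 \cong \Z/3\Z$, a group with no proper nontrivial subgroups. Hence a nontrivial $\tilde{c}$ is automatically surjective, and $K := \ker \tilde{c} \subset \pi_1(M)$ is a normal subgroup of index $3$, since $\pi_1(M)/K \cong \mathbb{X}_3$. By covering space theory this subgroup corresponds to a threefold covering $\hat{\pi}_f : \hat{M} \rightarrow M$ with $\hat{M} = \D/K$ and $\pi_1(\hat{M}) \cong K$; pulling back the Lorentz metric of $M$ makes $\hat{M}$ a Lorentz surface and $\hat{\pi}_f$ a local isometry. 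Setting $\hat{f} = f \circ \hat{\pi}_f$ gives a timelike minimal Lagrangian immersion, because timelikeness, minimality and the Lagrangian property are local and are preserved under the local diffeomorphism $\hat{\pi}_f$.

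It then remains to produce the global horizontal lift on $\hat{M}$. The universal cover of $\hat{M}$ is again $\D$, and the deck group of $\D \rightarrow \hat{M}$ is precisely $K = \ker \tilde{c}$ sitting inside $\pi_1(M)$. For every $\gamma \in K$ we have $\tilde{c}(\gamma) = 1$, hence $\gamma^*\tilde{\f} = \tilde{\f}$, so $\tilde{\f}$ is invariant under this smaller deck group and descends to a map $\hat{\f} : \hat{M} \rightarrow H^5_3$ with $\pi \circ \hat{\f} = \hat{f}$; being a local expression of $\tilde{\f}$, it is again horizontal, equivalently Legendrian, as required. The hard part here is not any computation but the descent bookkeeping: one must identify the intermediate covering $\hat{M}$ with the subgroup $K$, verify that the deck transformations of $\D \rightarrow \hat{M}$ are exactly the elements of $K$, and check that horizontality, equivalently the Legendre condition $\Im\langle \d\hat{\f}, \hat{\f}\rangle = 0$, is preserved under descent. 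Everything else is forced by the simplicity of $\Z/3\Z$ together with the structure theorem for $\tilde{c}$, so the dichotomy in the statement is exactly the dichotomy between $\tilde{c} \equiv 1$ and $\tilde{c}$ surjective.
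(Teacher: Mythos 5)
Your proposal is correct and follows essentially the same route as the paper: both arguments reduce the dichotomy to the triviality of the homomorphism $\tilde{c}:\pi_1(M)\to\mathbb{X}_3$, and in the nontrivial case both pass to the index-three kernel to build the threefold cover on which $\tilde{\f}$ descends. Your explicit verification that a global horizontal lift forces $\tilde{c}\equiv 1$ is a small but welcome addition that the paper leaves implicit.
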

\begin{proof}
Considering the homomorphism $\tilde{c}$  discussed in the last subsection we have only two possibilities:

Case 1: The homomorphism $\tilde{c}$ is trivial:
In this case we have $\gamma^*\widetilde{\mathcal F} = \widetilde{\mathcal F}$ 
 and $\widetilde{\mathcal F}$ descends to a horizontal lift
$\f:M \rightarrow H^5_3$  for $f$.

Case 2: The homomorphism $\tilde{c}$ is not trivial: hence the  image of $\tilde{c}$ is the 
group  $ \mathbb{X}_3 $.
 Let $\Gamma$ denote the kernel of $\tilde{c}$ and put $\hat{M} = \Gamma \backslash 
 \D$.
 Then $\widetilde{\mathcal F}$ descends to a horizontal map 
\[
 \hat{\f} : \hat{M}  \rightarrow  H^5_3, 
\]
  with  $\hat{M} = \Gamma \backslash \D$ and $\hat{\f}$ satisfies
\[
 \gamma^*\hat{\f} = \hat{c}(\gamma)\hat{\f},
\]
  where $\hat{c}: \pi_1(M)/\Gamma \rightarrow \mathbb{X}_3  \subset S^1$
 is the induced homomorphism.
  
 Clearly, $\Gamma$ is a normal subgroup of $\pi_1(M)$. Moreover, let $\xi$ denote some element of $\pi_1(M)$ satisfying $\tilde{c}(\xi) = \delta = e^{\frac{2 \pi \sqrt{-1}}{3}}$ and let  $\Xi$ denote the subgroup of $\pi_1(M)$ generated by $\xi$.
 Then the first isomorphism theorem for groups tells us 
\[
\pi_1(M) / \Gamma \cong \mathbb{X}_3, 
\] 
 and the second isomorphism theorem for groups tells us
\[
  \mathbb{X}_3 \cong \pi_1(M) / \Gamma \cong \Xi \Gamma /\Gamma \cong  \Xi /{\Xi \cap \Gamma}.
 \]
 As a consequence, the action of the group $\Xi$ on $\hat{M}$ 
 is realized by the group $\mathbb{X}_3 $.
 But the image of $\hat{c}$ is in $S^1$ and thus is annihilated by the 
 Boothby-Wang type projection.
 Thus the map
\begin{equation}
 \hat{f} : \hat{M} \rightarrow \CH, \quad \mbox{given by}\quad 
 \hat{f}  = \pi \circ \hat{\f}
\end{equation}
 is invariant under the action of $\pi_1(\hat M)$ and actually projects to $f$.
 The claim now follows from the following statements:

 $(1)$ $\hat{f} : \hat{M} \rightarrow \CH$ is a timelike minimal Lagrangian immersion 
 with global horizontal lift $\hat{\f},$
 
 $(2 )$ $\hat{M}$ is a threefold cover of $M$.
\end{proof}
\begin{Corollary} \label{threefoldcover}
 Let $M$ be any Lorentz surface and $f:M \rightarrow \CH$ a timelike minimal 
 Lagrangian immersion.
 Then either $f$ admits a global horizontal lift (which then is timelike minimal Legendrian) or there exists a threefold cover $\hat{\pi}: \hat{M} \rightarrow M$ such that $\hat{f} : \hat{M} \rightarrow 
 \CH$, given by  
 $\hat{f} =  \hat{\pi} \circ f$ has a global minimal  Legendrian lift to $H^5_3$.
 \end{Corollary}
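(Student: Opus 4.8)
The plan is to obtain this Corollary as an essentially immediate consequence of the preceding Theorem on global horizontal lifts, the only extra work being to reinterpret the word ``horizontal'' as ``minimal Legendrian''.

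First I would apply that Theorem verbatim to produce the dichotomy: either $f$ itself possesses a global horizontal lift $\f : M \to H^5_3$, or else there is a Lorentz surface $\hat M$ together with a threefold covering $\hat\pi : \hat M \to M$ such that $\hat f = f \circ \hat\pi$ possesses a global horizontal lift $\hat\f : \hat M \to H^5_3$. This settles the existence statement, so it remains only to identify the nature of these lifts.

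Next I would recall, from the Remark following the first Lemma, that for any lift the horizontality condition $\langle \d\f, \f \rangle = 0$ is equivalent to $\Im\langle \d\f, \f \rangle = 0$, i.e. to $\zeta(\f(q))(\d\f(q)) = 0$; hence a global horizontal lift of a timelike Lagrangian immersion is automatically a timelike Legendrian immersion into the contact manifold $H^5_3$. To upgrade ``Legendrian'' to ``minimal Legendrian'' I would use \eqref{eq:Hexpression}: the mean curvature vector of the lift is $\mathfrak H = e^{-\omega}\f_{uv} - \f$, and by the Remark on the second fundamental form in $H^5_3$ the Legendrian condition kills the $\sqrt{-1}\f$-component, so $\mathfrak H$ is the \emph{full} mean curvature vector of $\f$ in $H^5_3$. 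Because $\d\pi$ restricts to an isometry of $\d\f(\R^2)^\perp$ onto $\d f(\R^2)^\perp$ and $\d\pi(\mathfrak H) = H$, the minimality of $f$ is equivalent to $\mathfrak H \equiv 0$, i.e. to the lift being a minimal immersion; equivalently, by Proposition \ref{prp:minicharact}, to $L \equiv 0$, which holds by hypothesis. Applying this to $\f$ in Case~1 and to $\hat\f$ in Case~2 yields the asserted minimal Legendrian lift.

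The hard part here is not analytic but conceptual bookkeeping: one must be careful that the threefold cover produced by the Theorem is itself a \emph{Lorentz} surface on which $\hat f$ is again a timelike minimal Lagrangian immersion (so that the minimality argument of the previous paragraph applies downstairs on $\hat M$), and that the lift $\hat\f$ is globally well defined after passing to the kernel $\Gamma$ of the holonomy homomorphism $\tilde c$. Both points are already contained in the proof of the Theorem, so in the end the Corollary requires no new estimate, only the translation horizontal $\Leftrightarrow$ Legendrian together with the relation $\d\pi(\mathfrak H) = H$.
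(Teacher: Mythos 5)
Your proposal is correct and follows essentially the same route as the paper, which states this Corollary as an immediate consequence of the preceding Theorem (the paper offers no separate proof): the dichotomy comes verbatim from the Theorem, horizontality of the lift is identified with the Legendrian condition via the earlier Remark, and minimality of the lift follows from $\mathfrak H = e^{-\omega}\f_{uv}-\f$ being the full mean curvature vector of the Legendrian lift together with $\d\pi(\mathfrak H)=H$ and $\d\pi$ being an isometry on the horizontal distribution. Your added bookkeeping about $\hat M$ being a Lorentz surface on which $\hat f$ is again timelike minimal Lagrangian is indeed already supplied by the Theorem.
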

 \begin{Corollary}
 If the Lorenz surface is contractible, then the notions of a timelike 
 minimal Lagrangian immersion 
 into $\CH$ and a timelike minimal Legendrian immersion from 
 $M$ into $H^5_3$ are equivalent.
\end{Corollary}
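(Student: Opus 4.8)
The plan is to establish the claimed equivalence as a pair of mutually inverse constructions, using contractibility to force the trivial case of the covering dichotomy proved just above. The forward direction (Lagrangian $\to$ Legendrian) is essentially immediate from the preceding theorem, while the reverse direction (Legendrian $\to$ Lagrangian) amounts to checking that projecting a Legendrian immersion by the Boothby--Wang fibration $\pi$ yields a timelike minimal Lagrangian immersion.

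First I would observe that contractibility of $M$ makes $\pi_1(M)$ trivial, so in the notation of the previous subsection the universal cover $\D$ coincides with $M$ and the homomorphism $\tilde c$ is necessarily trivial. Hence only Case~1 of the covering dichotomy (see Corollary~\ref{threefoldcover}) can occur: every timelike minimal Lagrangian immersion $f: M \to \CH$ possesses a global horizontal lift $\f: M \to H^5_3$, unique up to a constant factor in $S^1$ by the horizontal-lift Lemma. By the Remark following that Lemma this horizontal lift is exactly a Legendre lift. It then remains to check that $\f$ is a timelike minimal Legendrian immersion: timelikeness and the immersion property pass to $\f$ because $\d\pi$ restricts to a linear isomorphism of the horizontal subspace $\mathcal H$ onto $T\CH$, and minimality passes because, by the Remark on the second fundamental form of a Legendre immersion, the $\sqrt{-1}\f$-component of $I\!I^{\f}$ vanishes, so the mean curvature of $\f$ in $H^5_3$ coincides with the $\mathcal H$-valued vector $\mathfrak H$ of \eqref{eq:Hexpression}.

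For the reverse construction I would start from a timelike minimal Legendrian immersion $\f: M \to H^5_3$ and set $f = \pi\circ\f$. The Legendrian condition $\Im\langle\d\f,\f\rangle = 0$, together with the automatic identity $\Re\langle\d\f,\f\rangle = 0$ on $H^5_3$, gives $\langle\d\f,\f\rangle = 0$, so $\f$ is horizontal; differentiating this relation in the two null directions shows that $\langle\f_u,\f_v\rangle$ is real, whence $\Omega(f_u,f_v)=\Im\langle\f_u,\f_v\rangle = 0$ and $f$ is Lagrangian. The same isomorphism $\d\pi|_{\mathcal H}$ transports timelikeness and the immersion property back down, and since $\d\pi(\mathfrak H) = H$ with $\d\pi$ injective on $\mathcal H$, the vanishing of the mean curvature of $\f$ is equivalent to $H = 0$, so $f$ is minimal.

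Finally I would record that these two assignments are inverse to one another up to the $S^1$-ambiguity in the horizontal lift: $\pi\circ\f = f$ by construction, and any horizontal lift of $\pi\circ\f$ agrees with $\f$ up to a constant $S^1$-factor by the uniqueness clause of the Lemma. I expect the only genuine point requiring care to be the equivalence of the two notions of minimality --- specifically, confirming via the Reck relation $I\!I^{f} = \d\pi\, I\!I^{\f}$ and the Legendrian vanishing of the $\sqrt{-1}\f$-component that \emph{minimal as a Lagrangian surface in} $\CH$ and \emph{minimal as a Legendrian surface in} $H^5_3$ coincide; everything else reduces to the already-established properties of the horizontal lift.
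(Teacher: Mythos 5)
Your proposal is correct and follows essentially the same route the paper intends: the paper states this corollary without a separate proof because contractibility trivializes $\pi_1(M)$, forcing Case~1 of the preceding theorem (equivalently, the opening Lemma already gives a global horizontal lift on a simply connected domain), while the reverse direction is the trivial projection noted in the subsequent Remark. Your additional bookkeeping on why minimality transfers in both directions (via $\d\pi(\mathfrak H)=H$ with $\d\pi$ injective on the horizontal subspace, and the vanishing of the $\sqrt{-1}\f$-component for Legendre immersions) is consistent with the paper's setup and fills in what the authors leave implicit.
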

\begin{Remark}
 The theorem just above
 shows that the relation between minimal Lagrangian surfaces and timelike minimal Legendrian surfaces is a bit delicate.
 Each timelike minimal Lagrangian surface from $M$ to $\CH$  induces a timelike 
 minimal Legendrian surface in $H^5_3$ either on $M$ itself, or at least on some threefold cover $\hat{M}$. 
 
On the other hand,  a timelike Legendrian surface $f: M\rightarrow H^5_3$, a horizontal map from the Lorentz surface $M$ to $H^5_3,$ induces trivially  a timelike Lagrangian surface $f_0:M \rightarrow \CH$ by  projection to $\CH$ via the Boothby-Wang type fibration.

 The difficulty in the relation between these surface classes 
 is in the (in)coherence of their domains, as expressed in the theorem above.
\end{Remark}
%%%%%%%%%%%%%%%%%%

\section{Real forms of $\LSL$}
 It is clear that the extended frame $F^{\lambda}$ introduced in the previous section
 takes values in the loop group of $\SU$.
 In this section, we show that the loop group 
 corresponding to a timelike minimal Lagrangian surface in $\CH$ 
 takes values in a particular real form of $\lsl$ (or more generally of
 the affine Kac-Moody Lie algebra of type $A_2^{(2)}$).
 
%%%%%%%%%%%%%%%%%% 
 
\subsection{Real forms of $\lsl$}
This subsection is  a brief digression which is intended to help to put this paper into a larger context.
\subsubsection{ The setting of this paper}
 A straightforward computation shows that 
 the Maurer-Cartan form $\alpha^{\lambda}$ in \eqref{eq:alphalambda}
 of the  extended frame $F^{\lambda}$ satisfies the following two 
 equations (where we write $\alpha (\lambda) = \alpha^\lambda$ 
 temporarily):
\[
 \hat \sigma (\alpha(\epsilon^{-1} \lambda)) = \alpha (\lambda), \quad 
 \hat \tau (\alpha( \bar \lambda)) = \alpha(\lambda),
\]
 where $\epsilon = e^{\pi \sqrt{-1}/3}$ is the sixth root of unity, 
 $\hat \sigma$ is an order  $6$
 linear outer automorphism of $\sl$ and 
 $\hat \tau$ is an anti-linear involution  of $\sl$ defined as follows$:$
\begin{gather}\label{eq:sita}
 \hat \sigma (X) = - \ad (\di(\epsilon^2, \epsilon^4, -1)  \r{P} )\> X^T\\
\intertext{and }
\label{eq:sita2}
 \hat \tau (X) = -\ad (\r{P})  \overline {X}^T,
\end{gather}
 where $\r{P}$ is defined in \eqref{eq:P}. 
  
 More precisely, the $\alpha$
 takes values in the following loop algebra:
\begin{equation}
 \lsl^{\tau} = \{ g : \C^{\times} \to 
 \sl\;|\; \sigma (g(\lambda)) = g(\lambda), \;\; 
\tau (g(\lambda)) = g(\lambda)\;\;\mbox{and $g$ is smooth}
 \},
\end{equation}
 where we defined 
 $\sigma (g) (\l) = \hat \sigma (g( \epsilon^{-1} \l))$ 
 and $\tau (g) (\l) = \hat \tau (g( \bar \l))$.
 Therefore, the extended frame $F$ takes values in the 
 loop group $\LSL^{\tau}$ whose Lie algebra is
 $\Lambda \sl_{\sigma}^{\tau}$:
\begin{equation}
 \LSL^{\tau} = \{ g : \C^{\times} \to 
 \SL\;|\; \sigma (g(\lambda)) = g(\lambda), \;\; 
\tau (g(\lambda)) = g(\lambda)\;\;\mbox{and $g$ is smooth}
 \},
\end{equation}
 where $\sigma$ is an order $6$ automorphism and $\tau$ is an anti-linear involution 
 defined by   
 $\sigma (g) (\l) = \hat \sigma (g( \epsilon^{-1} \l))$ 
 and $\tau (g) (\l) = \hat \tau (g( \bar \l))$ with 
\begin{align}\label{eq:sigma}
 \hat \sigma (g) &= \ad (\di(\epsilon^2, \epsilon^4, -1) \r{P} )\> (g^{T})^{-1}, \\
 \label{eq:tau}
 \hat\tau (g) &=\ad (\r{P})  \> (\overline{g}^{T})^{-1}.
\end{align}
 The order $6$ automorphism $\hat \sigma$ and the anti-linear involution naturally arise for 
 minimal Lagrangian surfaces as discussed in Section \ref{sc:Chat}.

 %%%%%%%%%%%%%%%%%%%
 \subsubsection{The case of $A_2^{(2)}$}
 The present paper deals with the Lie group $\SL$ with 
 an outer automorphism $\hat \sigma$ 
 and some anti-linear involution $\hat \tau$.
 It is known \cite{Kac} that up to 
 isomorphisms the Lie algebra and the order $6$ 
 automorphism are uniquely determined, see 
 \cite[Section 7]{DFKW} for details.
 
 Therefore, in our discussion above we could only change
 the anti-linear involution $\tau$ on $\lsl$, 
 the so-called the {\it real form} involution.
 Thus we fix the order $6$ automorphism $\sigma$ and 
 discuss the classification of real form involutions.
 
 In fact we have up to inner isomorphisms the following classification of real forms of $\lsl$, 
 or more generally the affine Lie algebra of $A_2^{(2)}$.
\begin{Theorem}[\cite{HG}]\label{thm:HG}
 The real form involutions $\tau$ for $\lsl$
 are classified $($up to isomorphism$)$
 as follows$:$
\begin{align*}
&(1)\;\; \tau(g)(\lambda) = - \overline
 {g(1/\bar \lambda)}^T, &  
 (2)&\;\; \tau(g)(\lambda) =\ad (I_{2,1}\r{P})  \> \overline
 {g(1/\bar \lambda)}, \quad\\
 & (3)\;\; \tau (g)(\lambda) =  - \ad (I_{2,1})  \overline
 {g(1/\bar \lambda)}^T, & 
 (4)&\;\; \tau (g)(\lambda) = \overline
 {g(\bar \lambda)},\\
& (5)\;\;\tau (g)(\lambda) = -\ad (\r{P})  \overline
 {g(\bar \lambda)}^T, &&
\end{align*}
 where $I_{2, 1} = \di(1, 1, -1)$. The first three are called 
 the {\rm almost compact} types and the rest are called 
 the {\rm almost split} types.
\end{Theorem}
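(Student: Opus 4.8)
The plan is to reproduce the classification of Heintze–Groß by the standard two-step reduction that is used for real forms of twisted loop algebras: first translate the problem of classifying anti-linear involutions $\tau$ commuting with the fixed order-six automorphism $\sigma$ into a finite-dimensional problem about the underlying Lie algebra $\sl$, and then list the inequivalent solutions modulo the group of inner automorphisms that commute with $\sigma$. The key structural fact, which I would quote from \cite{Kac}, is that the pair $(\sl,\hat\sigma)$ of the Lie algebra together with the order-six outer automorphism is unique up to isomorphism, so the only remaining freedom is in the real-form involution $\hat\tau$. Thus the whole classification is reduced to understanding anti-linear involutions of $\sl$ that are compatible with $\hat\sigma$, up to $\hat\sigma$-equivariant inner conjugation.

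First I would pin down the compatibility conditions. An anti-linear map $\tau$ on $\lsl$ is built from an anti-linear involution $\hat\tau$ of $\sl$ via $\tau(g)(\lambda)=\hat\tau\bigl(g(\nu(\lambda))\bigr)$ for some Möbius involution $\nu$ of the spectral parameter (either $\lambda\mapsto\bar\lambda$ or $\lambda\mapsto 1/\bar\lambda$). The requirement that $\tau$ preserve the $\sigma$-eigenspace decomposition — equivalently that $\tau$ commute with $\sigma$ — forces $\hat\tau\hat\sigma=\hat\sigma^{\pm1}\hat\tau$ together with a matching transformation law on $\nu$ under multiplication by $\epsilon$; this is exactly the mechanism that separates the two families. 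The \emph{almost split} cases arise when $\nu(\lambda)=\bar\lambda$ (so $\hat\tau$ commutes with $\hat\sigma$), and the \emph{almost compact} cases arise when $\nu(\lambda)=1/\bar\lambda$ (so $\hat\tau$ and $\hat\sigma$ anti-commute in the appropriate sense). I would verify that each of the five listed $\tau$ indeed satisfies $\tau^2=\id$ and commutes with the given $\sigma$ by a direct matrix computation using \eqref{eq:sigma} and \eqref{eq:tau}; since $\hat\sigma$ is conjugation by $\di(\epsilon^2,\epsilon^4,-1)\,\r{P}$ followed by inverse-transpose, the compatibility reduces to commutation relations among the fixed matrices $\r{P}$, $I_{2,1}$, and $\di(\epsilon^2,\epsilon^4,-1)$, which are all diagonal or anti-diagonal and hence easy to handle.

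Next I would carry out the completeness and non-redundancy part. On the finite-dimensional level an anti-linear involution of $\sl$ is either of the form $X\mapsto -\ad(S)\,\overline X^{T}$ (these are conjugate to the compact or to the split-type real forms $\mathfrak{su}$-variants, governed by the signature of the Hermitian matrix $S$) or of the form $X\mapsto \ad(S)\,\overline X$ (the $\mathfrak{sl}_3\R$-type, which is the source of case $(4)$ and $(2)$). The classification of such $S$ up to $\hat\sigma$-equivariant conjugacy, together with the two choices of $\nu$, produces exactly the five normal forms listed, and I would check pairwise inequivalence by computing an invariant that is preserved under the allowed conjugations — the simplest being the signature of the associated Hermitian form and the behaviour of $\hat\tau$ on $\hat\sigma$. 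The main obstacle is precisely this non-redundancy step: showing that no two of the five involutions are conjugate by an inner automorphism of $\LSL$ commuting with $\sigma$ requires controlling the full group of such conjugations on the loop level, not merely at a single value of $\lambda$, and verifying that the finite-dimensional invariants genuinely descend to complete invariants of the loop-algebra real form. In practice one appeals here to the Kac-Moody framework of \cite{HG} (and \cite{B3R,BMR}), where this rigidity is established; my proof would therefore reduce the statement to that framework rather than reprove the rigidity from scratch, and then simply match our normalization \eqref{eq:sigma}–\eqref{eq:tau} to the five canonical types.
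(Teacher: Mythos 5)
The paper does not prove this statement at all: it is quoted verbatim from Heintze--Gro{\ss} \cite{HG} (see also \cite{B3R, BMR}), so there is no internal argument to compare yours against. Your outline is a reasonable account of how such classifications are actually carried out --- reduce to anti-linear involutions $\hat\tau$ of $\sl$ compatible with the fixed order-six automorphism $\hat\sigma$, together with a choice of involution $\nu$ on the spectral parameter, and then classify up to $\hat\sigma$-equivariant conjugation --- and, like the paper, you ultimately delegate the completeness and rigidity step to \cite{HG}. That is an acceptable resolution for a cited theorem, though it means your ``proof'' is in substance the same citation the authors make.

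One concrete error: you have the two compatibility relations swapped. If $\tau(g)(\lambda)=\hat\tau(g(\bar\lambda))$ (the almost split cases $(4)$ and $(5)$), then requiring $\tau$ to preserve $\lsl$ forces $\hat\sigma\hat\tau\hat\sigma=\hat\tau$, i.e.\ $\hat\tau$ conjugates $\hat\sigma$ into $\hat\sigma^{-1}$; this is exactly the relation \eqref{eq:sigmahat} that the paper verifies for its involution $(5)$ and uses to define quasi $k$-symmetric spaces, and the authors explicitly note that $\hat\sigma$ and $\hat\tau$ do \emph{not} commute in this case. Conversely, if $\tau(g)(\lambda)=\hat\tau(g(1/\bar\lambda))$ (the almost compact cases $(1)$--$(3)$), the same computation --- using $\overline{\epsilon^{-1}\lambda}^{\,-1}=\epsilon^{-1}\bar\lambda^{-1}$ --- yields $\hat\sigma\hat\tau\hat\sigma^{-1}=\hat\tau$, i.e.\ $\hat\tau$ \emph{commutes} with $\hat\sigma$. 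Your parentheticals assert the opposite pairing. This does not invalidate the overall strategy, but if left as stated it would derail the verification step you propose (checking that each listed $\tau$ preserves $\lsl$), and it conflicts with the relation the paper itself relies on in Section \ref{sc:Gauss}.
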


\begin{Corollary}
 The real form involution $(5)$ in Theorem $\ref{thm:HG}$ 
 is defined by the anti-linear involution $\hat \tau$ 
 defined in \eqref{eq:sita2}.
\end{Corollary}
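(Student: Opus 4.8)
The plan is to recognize that the corollary is, at bottom, a verbatim comparison of two formulas, the only genuine work being to confirm that $\hat\tau$ really is a real form involution of $\lsl$ so that the comparison with the Heintze--Gross list is legitimate. Recall from the construction of $\lsl^{\tau}$ that the involution of the loop algebra attached to the finite-dimensional antilinear map $\hat\tau$ is defined by $\tau(g)(\lambda) = \hat\tau(g(\bar\lambda))$. First I would simply substitute the defining formula \eqref{eq:sita2}, namely $\hat\tau(X) = -\ad(\r{P})\,\overline{X}^T$, evaluated at $X = g(\bar\lambda)$, to obtain $\tau(g)(\lambda) = -\ad(\r{P})\,\overline{g(\bar\lambda)}^T$. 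This is letter-for-letter the involution labelled $(5)$ in Theorem \ref{thm:HG}; hence $\hat\tau$ defines that very involution, and not merely one inner-isomorphic to it, which is all the classification asserts in general.

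What then remains is to check that $\hat\tau$ is a bona fide real form involution, so that placing it in the Heintze--Gross list makes sense. I would verify the three requirements in turn, each being a short finite-dimensional computation. Antilinearity is immediate from the complex conjugation $X \mapsto \overline{X}$. Involutivity, $\hat\tau^2 = \id$, follows from $\r{P}^2 = \id$ (read off from \eqref{eq:P}), together with the facts that the conjugate transpose squares to the identity and that $\r{P}$ is real and symmetric. That $\hat\tau$ preserves $\sl$ is clear, since $\ad(\r{P})$ is inner and the conjugate transpose preserves tracelessness.

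The only step with any content is the compatibility of $\hat\tau$ with the order-six automorphism $\hat\sigma$ of \eqref{eq:sigma}: for $\tau$ to descend to the $\sigma$-invariant loop algebra $\lsl$ one needs the braid-type relation $\hat\sigma\circ\hat\tau\circ\hat\sigma = \hat\tau$, equivalently $\hat\tau\,\hat\sigma\,\hat\tau^{-1} = \hat\sigma^{-1}$. I expect this to be the main (and essentially only) obstacle. It is nevertheless a purely matrix identity: expanding with $\hat\sigma(X) = -\ad(\di(\epsilon^2,\epsilon^4,-1)\r{P})\,X^T$ and $\hat\tau(X) = -\ad(\r{P})\,\overline{X}^T$, and using $\bar\epsilon = \epsilon^{-1}$ (which also accounts for the factor $\epsilon$ appearing when one conjugates the spectral parameter $\epsilon^{-1}\lambda$), the relation reduces to a commutation identity between $\r{P}$ and the diagonal twist $\di(\epsilon^2,\epsilon^4,-1)$. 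Since the earlier assertion that the extended frame takes values in $\LSL^{\tau}$ already presupposes exactly this compatibility, I would in practice cite it rather than recompute it; once it is granted, the identification of $\hat\tau$ with involution $(5)$ is complete.
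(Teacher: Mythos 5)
Your proposal is correct and matches the paper's (implicit) argument: the paper offers no written proof for this corollary because, exactly as you observe, substituting \eqref{eq:sita2} into $\tau(g)(\lambda)=\hat\tau(g(\bar\lambda))$ reproduces formula $(5)$ of Theorem \ref{thm:HG} verbatim. The supplementary checks you list are sound, and the compatibility relation $\hat\sigma\hat\tau\hat\sigma=\hat\tau$ that you propose to cite rather than recompute is indeed recorded later in the paper as \eqref{eq:sigmahat}.
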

\begin{Remark}
\mbox{}

\begin{itemize}
 \item[(a)] 
 Each involution in Theorem \ref{thm:HG} corresponds a particular 
 special class of surfaces:
\begin{enumerate}
\item[(1)] Minimal Lagrangian surfaces in $\mathbb {CP}^2$.
\item[(2)] Elliptic or hyperbolic affine spheres in $\mathbb {R}^3$.
\item[(3)] Minimal Lagrangian surfaces in $\mathbb {CH}^2$.
\item[(4)] Indefinite affine spheres in $\R^3$.
\item[(5)] Timelike minimal Lagrangian surfaces in $\CH$.
\end{enumerate}
\vspace{0.5cm}

\item[(b)] 
 For the first four cases listed above 
 a loop group procedure has already been developed which allows (at least in principle) to construct all the surfaces of the corresponding class.
 This is a consequence of the fact that these surfaces can be characterized by a certain ``Gauss map'' to be harmonic.
 Actually, a harmonic Gauss map has only been established explicitly 
 for minimal Lagrangian surfaces in $\mathbb{CH}^2$, that is, in the  
 case $(3)$. In all other cases the existence of a harmonic Gauss map can be concluded, since the Maurer Cartan form of the naturally associated moving frame admits the insertion of a parameter $\lambda \in S^1$ in such a way as it is know to correspond to a primitive harmonic map. Below we will modify the construction of $(3)$ in a generalized way
 so as to fit the purposes of this paper and to permit to prove a Ruh-Vilms type theorem.
 For the remaining cases a Gauss map will be constructed elsewhere.
\vspace{0.5cm}

\item[(c)]
 Actually, when trying to cover all surface classes falling under the scheme outlined above one also needs to consider what happens if one considers an anti-linear automorphism which is conjugated by an inner automorphism such that the induced anti-linear automorphism of the loop group/loop algebra still commutes with $\hat{\sigma}$, see 
 \cite[Section 7]{DFKW}.
As a matter of fact, such cases did already occur in the paper \cite{Ko;real} and will also occur at least in case $(2)$ above.
\end{itemize}

\end{Remark}

%%%%%%%%%%%%%%%%%%%%%%
\section{Three quasi $6$-symmetric spaces and Gauss maps}\label{sc:Gauss}

 In this section we define for a timelike Lagrangian surface in $\CH$
 three quasi $6$-symmetric 
 spaces (see Definition \ref{def:k-symmetric}) as well as
 associated Gauss maps into these spaces.
 These Gauss maps are essentially the same.
 In Theorem \ref{thm:FinallCharact} we characterize 
 a timelike minimal Lagrangian surface in $\CH$ in terms of its Gauss maps,
 thus proving a Ruh-Vilms type  theorem.
 
 %%%%%%%%%%%%%%%
\subsection{Various bundles}

 We first introduce three quasi $6$-symmetric spaces of dimension $7$
 which 
 are bundles over $H^5_3$. Our approach follows \cite{McIn} in spirit, 
 but, as a matter of fact, we translate
 the work of \cite{McIn} into an ``indefinite version'' of that paper.
 We consider altogether three spaces, $FL_1$, $FL_2$, and $FL_3$.
 We first choose a natural basis $e_1, e_2, e_3$ of $\C^3_2$. 

 $(1) \; FL_1:$ 
We now consider $\C^3_2$ as the real $6$-dimensional symplectic vector space 
 given by the symplectic form $\Omega = \Im \langle \;,\; \rangle$. 
 Then the family of oriented maximal Lagrangian ($\equiv$ isotropic) subspaces n
 of  $\C^3_2$  form a submanifold of the real Grassmannian $3$-spaces of $\C^3_2$, 
 that is, they form  the oriented Lagrangian Grassmannian 
 manifold $\LGr(3, \C^3_2)$. It is known \cite{Yuxin} that 
 $\LGr(3, \C^3_2)$ can be represented as a homogeneous space 
 $\U / \SOt$. If the Lagrangian Grassmannian $\LGr(3, \C^3_2)$
 is, in particular, an orbit of $\SU$,  it will be called 
 \textit{special} Lagrangian Grassmannian and it will be denoted by 
 $\SLGr(3, \C^3_2)$.
 
\begin{Proposition}\label{Prp:Grass}
  $\SU$ acts transitively on  $\SLGr(3, \C^3_2)$, 
  and we obtain
 \[
\SLGr(3, \C^3_2) = \SU / \SOt.
 \]
\end{Proposition}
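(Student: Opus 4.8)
\documentclass[12pt]{amsart}

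The plan is to establish the identification $\SLGr(3, \C^3_2) = \SU / \SOt$ by exhibiting a single ``base point'' Lagrangian subspace $L_0$, computing its stabilizer in $\SU$, and then verifying transitivity of the $\SU$-action. First I would recall from the stated result of \cite{Yuxin} that the full oriented Lagrangian Grassmannian is $\LGr(3, \C^3_2) = \U / \SOt$, so the orbit structure under the subgroup $\SU \subset \U = S^1 \cdot \SU$ is what remains to be pinned down. The natural base point is the ``real'' subspace $L_0 = \R e_1 \oplus \R e_2 \oplus \R e_3 \subset \C^3_2$ spanned by the chosen basis; since $\Omega = \Im\langle\,,\,\rangle$ and the Hermitian form $\langle z,w\rangle = z^T \r{P}\bar w$ takes real values on real vectors, $\Omega$ vanishes identically on $L_0$, so $L_0$ is Lagrangian (maximal isotropic of real dimension $3$ in the $6$-dimensional symplectic space).

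Next I would compute the stabilizer of $L_0$ inside $\SU$. An element $A \in \U$ preserves $L_0$ as an oriented real subspace precisely when $A$ maps real vectors to real vectors with positive determinant on $L_0$; the condition $\langle Av, Aw\rangle = \langle v,w\rangle$ together with $A(L_0) = L_0$ forces $A$ restricted to $L_0$ to preserve the real symmetric form $g|_{L_0}$ obtained from $\r{P}$, which has signature $(2,1)$. This identifies the stabilizer of $L_0$ in $\U$ with $\Ot$ (real matrices $B$ with $B^T \r{P} B = \r{P}$), and intersecting with the orientation and determinant-one conditions gives $\SOt$. The key point I would check carefully is that within $\SU$ the stabilizer is exactly $\SOt$ and that the scalar $S^1$-factor in $\U = S^1 \cdot \SU$ does not contribute extra stabilizing elements: a nontrivial scalar $e^{i\theta}\cdot \id$ rotates each real line out of $L_0$ unless $e^{i\theta} = \pm 1$, and the determinant-one constraint restricts which scalars survive, so the diagonal cyclic overlap $S^1 \cap \SU$ must be analyzed.

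Finally I would verify that $\SU$ acts transitively on the component $\SLGr(3,\C^3_2)$. Since $\U = S^1 \cdot \SU$ already acts transitively on $\LGr(3,\C^3_2)$ and $\SLGr(3,\C^3_2)$ is \emph{defined} as the $\SU$-orbit of $L_0$, transitivity on that orbit is immediate; the substance is showing the orbit is a full connected component and that the orbit map induces the stated diffeomorphism $\SU/\SOt \cong \SLGr(3,\C^3_2)$. This follows from the general fact that for a Lie group acting smoothly with compact-type isotropy the orbit map $A\SOt \mapsto A\cdot L_0$ is a diffeomorphism onto the orbit, combined with the stabilizer computation above. I expect the main obstacle to be the stabilizer calculation, specifically disentangling how the central $S^1$ sits relative to $\SU$ and confirming that passing from $\Ot$ to $\SOt$ corresponds exactly to the orientation condition plus the determinant-one normalization; the transitivity itself is essentially built into the definition once \cite{Yuxin} is invoked.
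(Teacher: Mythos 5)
The paper states Proposition \ref{Prp:Grass} without any proof, so your argument has to stand on its own --- and in substance it does. Your choice of base point $L_0=\R e_1\oplus\R e_2\oplus\R e_3$ is exactly the subspace the authors use later when computing the isotropy group of $FL_1$, and the stabilizer computation is correct: a complex-linear $A\in\SU$ with $A(L_0)=L_0$ must have real columns, hence is a real matrix satisfying $A^T\r{P}A=\r{P}$, i.e.\ $A\in\Ot$; the orientation condition then forces $\det A=1$, so the isotropy group of the oriented plane is $\SOt$, and the central scalars $\zeta\,\id$ with $\zeta^3=1$ that survive inside $\SU$ move $L_0$ and contribute nothing. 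Since the paper defines $\SLGr(3,\C^3_2)$ precisely as an $\SU$-orbit inside $\LGr(3,\C^3_2)=\U/\SOt$ (the latter quoted from \cite{Yuxin}), transitivity is definitional and the identification $\SU/\SOt$ follows from the stabilizer computation.

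Two points need correcting. First, your assertion that ``the substance is showing the orbit is a full connected component'' is false, and fortunately also unnecessary: $\LGr(3,\C^3_2)\cong\U/\SOt$ is connected of real dimension $6$, whereas $\SU/\SOt$ has dimension $5$. Since $\SU$ is the kernel of $\det:\U\to S^1$ and $\det(\SOt)=\{1\}$, the $\SU$-orbits are exactly the fibers of the induced map $\U/\SOt\to S^1$; they form a circle's worth of hypersurfaces in $\LGr(3,\C^3_2)$, distinguished by the phase of the complex volume form on the Lagrangian plane (this is the same phenomenon as in \cite{McIn}, and it is why the Lagrangian angle $e^{2\mathcal L}$ and the Maslov form appear later in the paper). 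Had the connected-component claim really been the substance, the proof would fail. Second, ``compact-type isotropy'' is not the right justification for the orbit map $A\,\SOt\mapsto A\cdot L_0$ being a diffeomorphism onto the orbit: $\SOt$ is noncompact. The correct statement is that $\SOt$ is a closed subgroup, so $\SU/\SOt$ carries a unique smooth structure for which the orbit map is an injective immersion, and one either takes this as the definition of the manifold structure on $\SLGr(3,\C^3_2)$ (as the paper implicitly does) or observes that the orbit, being a fiber of the continuous map $\det$, is closed in $\LGr(3,\C^3_2)$, whence the immersion is an embedding.
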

Next we define: 
\begin{equation}
FL_1 = \{ (v,V)\;|\; v \in H^5_3, \; v \in V, \;
 V  \in  \SLGr(3, \C^3_2)\}.
\end{equation}
 It is easy to verify that $\SU$ acts on $FL_1$.
 Note that the natural projection from $FL_1$ to $\CH$ is a pseudo-Riemannian submersion which is equivariant under the natural group actions.
 Since $H^5_3 = \SU/\SUoneone$, where  $\SUoneone$ means 
 $\SUoneone \times \{1\}$, 
 the stabilizer at 
 \[
  (e_3, \operatorname{span}_{\R}\{ e_1, e_2,  e_3 \}) \in FL_1
 \]
  is clearly given by
 $\SUoneone \cap \SOt$, that is
 \[
  {\rm SO}_{1,1} =\{(a, a^{-1}, 1)\;|\; a \in \R^{\times}\}.
 \] 
 Therefore 
\[
 FL_1 = \SU/{\rm SO}_{1, 1}.
\]

 $(2)\; FL_2:$
 For the definition of $FL_2$, 
 we consider certain ``special regular complex flags'' in $\C^3_2$.
 Here by a regular complex flag we mean a sequence of four 
 complex subspaces, 
 $Q_0 = \{0\} \subset Q_1 \subset Q_2 \subset  Q_3 = \C^3_2$ 
 of $\C^3_2$, 
 where $Q_j$ has  complex dimension $j$.
 We then define the notion of a special regular complex flag 
 in $\C^3_2$ over $q \in H^5_3$  
 by requiring that we have a  regular complex flag in $\C^3_2$,  
 where the space $Q_1$  satisfies $Q_1 = \C q$.
 
 Thus we define:
 \begin{equation*}
FL_2 = 
 \{ (w,W)\;|\; w \in H^5_3, \; w \in W, \; \text{$W$ is a special regular complex 
 flag in $\C^3_2$}  
 \}.
\end{equation*}
  The definition of a special flag means that one can find three vectors, 
 $q_1, q_2, q_3 \in \C^3_2$ with
 $q = q_3$,  such that (using the signature of $\C^3_2$) the vectors $q_1$ and $q_2$ span a subspace with signature $(1,1)$. So we obtain a triple $q_1,q_2$ and $q_3$ as discussed in the previous case. By an argument analogous to the previous case we conclude 
 that  $\SU$  acts transitively on the family of special flags.
 Then the stabilizer of the action at a point 
 $(e_3, 0 \subset \C e_3 \subset  \C e_3 \oplus \C e_2
 \subset  \C e_3 \oplus \C e_2  \oplus \C e_1)$
 is again given by $\SOt \cap \di$, where $\di$ denotes the set of all
 diagonal matrices in $\SU$. Thus it is again $ {\rm SO}_{1,1}$.
\begin{Proposition}\label{Prp:Flags}
 $\SU$ acts transitively on $FL_2$, and it can be 
 represented as
\[
  FL_2 = \SU / {\rm SO}_{1, 1}.
\]
 \end{Proposition}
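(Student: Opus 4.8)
The plan is to prove $FL_2=\SU/{\rm SO}_{1,1}$ by the orbit--stabiliser method, in direct parallel with the treatment of $FL_1$ in Proposition \ref{Prp:Grass}. Concretely, two things must be checked: that $\SU$ acts transitively on $FL_2$, and that the isotropy subgroup at the base point
\[
 \left(e_3,\; 0\subset \C e_3\subset \C e_3\oplus\C e_2\subset \C^3_2\right)
\]
is exactly ${\rm SO}_{1,1}$. Once both hold, the homogeneous-space representation is immediate.

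For transitivity I would attach a unitary frame to an arbitrary special flag $(w,W)$ and read off the group element. The special structure recorded just before the statement supplies a triple $q_1,q_2,q_3$ with $q_3=w$, so that $\langle q_3,q_3\rangle=-1$ and $\operatorname{span}_\C\{q_1,q_2\}$ has signature $(1,1)$. I would normalise this triple so that its Hermitian Gram matrix equals $\r{P}$: starting from $q_3=w$, I use that the orthogonal complement $(\C w)^\perp$ is two-dimensional of signature $(1,1)$, pick $q_2\in Q_2$ null with $\langle q_2,q_2\rangle=\langle q_2,q_3\rangle=0$, and choose the complementary null vector $q_1\in(\C w)^\perp$ with $\langle q_1,q_2\rangle=1$. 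Since the columns then have Gram matrix $\r{P}$, the matrix $A=(q_1\mid q_2\mid q_3)$ lies in $\U$; right-multiplying by a diagonal element of $\U$ of the form $\di(d_1,d_2,1)$, which fixes the last column $w$ and preserves the flag, normalises $\det A$ to $1$, so that $A\in\SU$ and $A$ carries the base flag to $(w,W)$. This establishes transitivity.

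For the isotropy group I would take $g\in\SU$ fixing the base flag. Fixing $e_3$ and preserving $Q_1=\C e_3$ and $Q_2=\C e_3\oplus\C e_2$ forces $g$ to be lower triangular in the basis $e_1,e_2,e_3$; imposing that $g$ preserve the Hermitian form (Gram matrix $\r{P}$) then collapses $g$ to a diagonal matrix together with a single purely imaginary unipotent entry. The decisive step is to intersect with $\SOt$, the stabiliser of the real special Lagrangian span $\operatorname{span}_\R\{e_1,e_2,e_3\}$ carried along with the flag exactly as in the $FL_1$ picture: reality eliminates the imaginary unipotent parameter, leaving precisely the real diagonal matrices $\di(a,a^{-1},1)$, that is $\SOt\cap\di={\rm SO}_{1,1}$. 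Combining transitivity with this isotropy computation yields $FL_2=\SU/{\rm SO}_{1,1}$.

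The step I expect to be delicate is precisely this last reduction of the isotropy group. Preservation of the bare complex flag alone leaves a two-parameter stabiliser in $\SU$ --- there is a genuine purely imaginary unipotent direction commuting with the flag --- so to recover the one-dimensional group ${\rm SO}_{1,1}$ one must really use the special Lagrangian (real) data encoded in the flag, i.e.\ form the intersection $\SOt\cap\di$ rather than the stabiliser of the complex flag by itself. Pinning down that this intersection is exactly ${\rm SO}_{1,1}$, and matching it with the isotropy group already found for $FL_1$, is where the content lies; the adapted-basis construction for transitivity is then routine linear algebra in $\C^3_2$.
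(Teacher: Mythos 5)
Your argument follows the same route as the paper's, whose entire proof consists of two sentences: transitivity is asserted ``by an argument analogous to the previous case'', and the isotropy group at the base flag is asserted to be $\SOt\cap\di$. Your adapted null frame (take $q_3=w$, a null vector $q_2\in Q_2\cap(\C w)^{\perp}$, the dual null vector $q_1$ with $\langle q_1,q_2\rangle=1$, then fix the determinant by a diagonal element fixing $w$) is precisely the intended transitivity proof, written out.

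Where you go beyond the paper is the isotropy computation, and the subtlety you flag is genuine. The stabilizer in $\SU$ of the pair $\bigl(e_3,\,0\subset\C e_3\subset\C e_3\oplus\C e_2\subset\C^3_2\bigr)$ really is the two-parameter group
\[
\left\{\begin{pmatrix} b^{-1} & 0 & 0\\ \sqrt{-1}\,t & b & 0\\ 0 & 0 & 1\end{pmatrix} : \ b\in\R^{\times},\ t\in\R\right\}\subset\SU,
\]
as one checks from $g^{T}\r{P}\,\bar g=\r{P}$ and $\det g=1$; the paper never mentions this unipotent direction and simply writes down $\SOt\cap\di$. Your way out --- regarding the real span $\operatorname{span}_{\R}\{e_1,e_2,e_3\}$ as ``carried along with the flag'' and intersecting with $\SOt$ --- reproduces the paper's answer, but note that this real datum is not contained in the definition of $FL_2$ as stated: a point of $FL_2$ is only a vector together with a complex flag, and the flag determines neither the real Lagrangian span nor the complex line $\C q_1$ (either of which would kill the unipotent parameter). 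As literally defined the orbit is six-dimensional, and the proposition holds only after the data of a point of $FL_2$ is enriched; this is a gap your proposal inherits from the paper rather than one it introduces, and you have correctly located it as the place where the content of the proposition lies.
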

 Note that the natural projection from $FL_2$ to $\CH$ is a pseudo-Riemannian submersion which is equivariant under the natural group actions.

 $(3)\; FL_3:$ Finally, using the isometry group $\SU$ of $H^5_3$,
 we can directly define 
 a homogeneous space $FL_3$ as
\begin{equation}\label{eq:FL3}
FL_3 = \{ U P_1 \;U^T\;|\; \mbox{$U \in \SU$ and $P_1=\di (\epsilon^2, \epsilon^4, -1) \r{P}$}\},
\end{equation}
 where $\epsilon = e^{\pi \sqrt{-1}/3}$ and $\r{P}
 =\left(\begin{smallmatrix}0 & 1 & 0 \\ 1 & 0 & 0 \\ 0 & 0 & -1 \end{smallmatrix}\right)$ 
 as defined in \eqref{eq:P}.
 %
% For the definition of $FL_2$, 
% we consider certain ``special regular complex flags'' in $\C^3_2$.
% Here by a regular complex flag we mean a sequence of four 
% complex subspaces, 
% $Q_0 = \{0\} \subset Q_1 \subset Q_2 \subset  Q_3 = \C^3_2$ 
% of $\C^3_2$, 
% where $Q_j$ has  complex dimension $j$.
% We then define the notion of a special regular complex flag 
% in $\C^3_2$ over $q \in H^5_3$  
% by requiring that we have a  regular complex flag in $\C^3_2$,  
% where the space $Q_1$  satisfies $Q_1 = \C q$.
% 
% We also define:
% \begin{equation*}
% FL_2 = \{ (w,W)\;|\; w \in H^5_3, \; w \in W, \; \text{$W$ is a special regular complex 
% flag in $\C^3_2$ over $w$}
% \}.
%\end{equation*}
%
% 
% \magenta{It is easy to verify that $\U$ acts on $FL_2$. ???}
% \magenta{Note that the natural projection from $FL_2$ to $\CH$ is a pseudo-Riemannian submersion which is equivariant under the natural group actions.}
%
%
% 
%\begin{Proposition}\label{Prp:Flags}
% $\U$ acts transitively on all special flags.
% \end{Proposition}
% \begin{proof}
% The definition of a special flag means that one can find \magenta{three vectors, 
% $q_1, q_2, q_3 \in \C^3_2$ with
% $q = q_3$,  such that (using the signature of $\C^3_2$) the vectors $q_1$ and $q_2$ span a subspace with signature $(1,1)$. So we obtain a triple $q_1,q_2$ and $q_3$ as discussed in the previous case. By an argument analogous to the previous case we conclude }that  $\U$  acts transitively on the family of special flags.
% \end{proof}
%
%
\begin{Theorem}
We  retain the assumptions and the notion above. 
 Then the following statements hold$:$
\begin{enumerate}
\item The spaces $FL_j, (j = 1,2,3)$ are homogeneous under the natural action of 
 $\SU$.
\item The homogeneous space $FL_j, (j = 1,2,3)$ can be represented as
 \[
       FL_j = \SU/ {\rm SO}_{1, 1}, 
 \quad \mbox{where}\quad 
{\rm SO}_{1, 1} = \{ \di (a,a^{-1}, 1)\;|\; 
 a\in \R^{\times} \}.
 \]
 In particular they are all $7$-dimensional.
\end{enumerate}
\end{Theorem}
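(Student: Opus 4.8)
The plan is to handle the three spaces separately, reducing almost all of the work to the single new computation for $FL_3$ and then reading off the common dimension at the end.

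For $j=1$ and $j=2$ there is essentially nothing new: transitivity of the natural $\SU$-action together with the identification of the isotropy subgroup at the chosen base point with ${\rm SO}_{1,1}$ was already established for $FL_1$ in the discussion around Proposition \ref{Prp:Grass} and for $FL_2$ in Proposition \ref{Prp:Flags}. I would simply invoke these two results to obtain statements $(1)$ and $(2)$ for $j=1,2$.

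The substantive step is $FL_3$. By its very definition \eqref{eq:FL3}, the space $FL_3$ is the orbit of the single matrix $P_1=D\,\r{P}$, with $D=\di(\epsilon^2,\epsilon^4,-1)$, under the action $U\cdot X=U X U^{T}$ of $\SU$; hence $FL_3$ is homogeneous, with orbit map $U\mapsto U P_1 U^{T}$. It then remains to compute the stabilizer $H=\{U\in\SU \mid U P_1 U^{T}=P_1\}$. Using the defining relation of $\SU$ in the form $\r{P}\,U^{T}\r{P}=\overline{U}^{-1}$ (equivalent to $U^{T}\r{P}\,\overline U=\r{P}$), I would rewrite the fixing condition as
\[
 U P_1 U^{T}=P_1 \iff U D\,\overline{U}^{-1}=D \iff \overline U = D^{-1} U D ,
\]
which entrywise reads $\overline{u_{ij}}=(d_j/d_i)\,u_{ij}$ with $(d_1,d_2,d_3)=(\epsilon^2,\epsilon^4,-1)$. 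Thus the diagonal entries are real and each off-diagonal entry is confined to a fixed real line determined by a ratio of sixth roots of unity.

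To finish, the cleanest route is to pass to the Lie algebra, where the isotropy condition becomes $\mathfrak h=\{X\in\su \mid \overline X = D^{-1} X D\}$. Imposing this reality constraint simultaneously with the $\su$-relations $x_{11}=-\overline{x_{22}}$, $x_{31}=\overline{x_{23}}$, $x_{32}=\overline{x_{13}}$, $\Re x_{12}=\Re x_{21}=\Re x_{33}=0$ and $\tr X=0$, one checks that the paired conditions on $(x_{13},x_{32})$ and on $(x_{23},x_{31})$ force $x_{13}=x_{23}=0$ (since $\epsilon^{2}\neq 1$), that $x_{12}=x_{21}=x_{33}=0$ (since $-1\notin\{\delta,\delta^{2}\}$ and $\R\cap\sqrt{-1}\,\R=\{0\}$), and that $x_{11}\in\R$ with $x_{22}=-x_{11}$. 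Hence $\mathfrak h=\{\di(a,-a,0)\mid a\in\R\}=\Lie({\rm SO}_{1,1})$ is one-dimensional. Since every $\di(a,a^{-1},1)$ with $a\in\R^{\times}$ is directly seen to lie in $\SU$ and to fix $P_1$, we have ${\rm SO}_{1,1}\subseteq H$ with matching one-dimensional Lie algebras; a short check on connected components (using $\det U=1$) upgrades this to $H={\rm SO}_{1,1}$, so that $FL_3=\SU/{\rm SO}_{1,1}$.

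With all three presentations $FL_j=\SU/{\rm SO}_{1,1}$ in hand, the uniform dimension is immediate from $\dim FL_j=\dim\SU-\dim{\rm SO}_{1,1}=8-1=7$. The step I expect to be the main obstacle is the stabilizer computation for $FL_3$: the delicate point is the interplay between the reality conditions $\overline{u_{ij}}=(d_j/d_i)u_{ij}$ and the Hermitian $\SU$-relations that link the $(1,3)$ entry with the $(3,2)$ entry and the $(2,3)$ entry with the $(3,1)$ entry, since it is exactly this pairing (visible only after linearizing) that collapses the off-diagonal block to zero; the component-level sharpening $H={\rm SO}_{1,1}$, as opposed to merely the equality of identity components, is the remaining minor subtlety.
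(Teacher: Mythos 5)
Your proposal is correct and follows the same route as the paper: statements for $FL_1$ and $FL_2$ are quoted from Propositions \ref{Prp:Grass} and \ref{Prp:Flags}, and the only new content is the stabilizer of $P_1$ in $FL_3$, which the paper dismisses as ``easily computed'' and which you work out explicitly (your reduction $UP_1U^T=P_1\iff \bar U=D^{-1}UD$ and the resulting entrywise reality conditions are correct, and the component check you flag does close up, since preserving both the symmetric and the skew part of $D^{1/2}\r{P}D^{-1/2}$ forces the real matrix $D^{-1/2}UD^{1/2}$ to be $\di(a,a^{-1},1)$).
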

\begin{proof}
$(1)$, $(2)$: The statements clearly follow from Proposition \ref{Prp:Grass}, 
 Proposition 
 \ref{Prp:Flags} and  the definition of $FL_3$ in \eqref{eq:FL3}
 and the stabilizer at $P_1$ is easily computed as 
  ${\rm SO}_{1, 1}$.
%
% The spaces $FL_3 \cong \U/ \Ho$  and \cyan{$FL_1 \cong \U/ \Ho$}
% are equivariantly diffeomorphic as follows:
% First we want to consider homogeneous spaces 
%\[
% M_{P_1} = \{ UP_1 \,U^T\; | \;U \in \U\} \quad \mbox{and} \quad 
% M_{P_2} = \{ UP_2\, U^T\; | \;U \in \U \}.
%\]
%Assume we know 
%\[
% P_2 = XP_1\, X^T \quad \mbox{\cyan{for some $X \in \U$.}}
%\]
%Then it is straightforward to verify:
%\[
% Fix_{P_1} = X^{-1} Fix_{P_2} X.
%\]
%In our case we know $P_1 =\di (\epsilon^2, \epsilon^4, 1) \r{P}$ as chosen for $FL_3$. 
%Next we want to find $X$ such that $X Fix_{P_1}X^{-1} = Fix_{P_2} = \Ho \times \{1\}$.
%
%???? Since $Fix_{P_2}$ and $\Ho$ are isomorphic as groups we only need to determine 
% $X$ such that $X \di ( i, -i,0) X^{-1} $
%$=  
%\left(
%  \begin{smallmatrix}
%    0 & i & 0 \\
%    i& 0 & 0 \\
%  0 & 0  & 1 
% \end{smallmatrix}
%\right)
%$. \cyan{This seems incorrect. $\det = 0$ on the left and not on the right.}But such an $X$ can easily be determined (note, both center matrices have the same eigenvalues).
%Then putting $P_2 = X P_1 X^{-1}$ determines $P_2$.
\end{proof}

%%%%%%%%%%%%%%%%%%
 \subsection{Quasi $k$-symmetric spaces}
 It is easy to prove that 
 the fixed point set of 
 the real form involution $\hat \tau$ in \eqref{eq:tau} of $\SL$ is isomorphic to
 $\SU$, that is,
\[
 \SU \cong \{ g \in \SL\;|\; \hat \tau (g) = g\}.
\] 
 On the one hand, the order $6$ automorphism $\hat \sigma$ in \eqref{eq:sigma} acting 
 on $\SL$ does not naturally act on $\SU$, since $\hat \sigma$ and $\hat \tau$ 
 do not commute. However they have the following relation 
\begin{equation}\label{eq:sigmahat}
 \hat \sigma \hat \tau \hat \sigma = \hat \tau.
\end{equation}
 By abuse of notation we will also denote the order $6$ automorphism and the 
 real form involution on $\sl$ by $\hat \sigma$ and $\hat \tau$, respectively.
 Let $x_j \;, j \in \{0, 1, \dots, 5\}$, be  an eigenvector 
 of $\hat \sigma$ for the eigenvalue $\epsilon^j = e^{2 \pi\sqrt{-1} j/6}$, that is, $\hat \sigma x_j = \epsilon^j 
 x_j$.
 Then \eqref{eq:sigmahat} yields 
\[
\hat \sigma \hat \tau(x_j) 
 = \epsilon^j \hat \tau(x_j).
\]
 So $\hat \tau$ leaves invariant the eigenspaces 
 $\mathfrak g^{\C}_j\subset \sl$ of $\hat \sigma$.
 And the fixed point algebra of $\hat \tau$ is spanned by  all elements of the form
 $x_j + \hat \tau(x_j), (j = 0,1,\dots,5)$ and $x_j$ arbitrary in $\mathfrak g^{\C}_j$.
 So the real form decomposes according to the eigenspaces of $\hat \sigma$ 
 and   we have 
 \begin{equation}
  \hat \sigma (x_j + \hat \tau(x_j)) 
 = \epsilon^j (x_j + \hat \tau(x_j)).  
 \end{equation}
 Thus  $\hat \sigma$ is not an automorphism of the real 
 Lie algebra $\textrm{Fix}(\hat \tau)$, but its action on 
 $\textrm{Fix}(\hat \tau)$ is easy to describe, see Section \ref{subsc:Cha}.
\begin{Definition}\label{def:k-symmetric}
 Let $G/K$ be a real homogeneous  space, and $\hat \tau$
 a real form involution  acting on the 
 complexification $G^{\C}$ of $G$, such that  $G = \textrm{Fix} (\hat \tau)$.
 Moreover,  let $\hat \sigma$ 
 be an order $k \;(k\geq 2)$ automorphism acting on $G^{\C}$.
 Then $G/K$ will be called a \textit{quasi $k$-symmetric space} if 
 the following conditions are satisfied
\begin{enumerate}
 \item 
 $\hat \sigma \hat \tau \hat \sigma = \hat \tau$, 
\item $\textrm{Fix}(\hat \sigma)^\circ 
 \subset K^{\C} \subset \textrm{Fix}(\hat \sigma)$.
\end{enumerate}
\end{Definition}
\begin{Remark}
  If the automorphism $\hat \sigma$ of $G^{\C}$ in fact acts on $G$, 
 then the \textit{quasi} $k$-symmetric 
  space $G/K$ is just a $k$-symmetric space. 
 However, by
 condition $(1)$, this happens if and only if $k=2$, and thus a quasi
 $k$-symmetric space $G/K$ is a $k$-symmetric space if and only if 
 it is a ($2$-)symmetric space.
\end{Remark}
\begin{Corollary}
 The homogeneous spaces $FL_j \;(j=1, 2, 3)$ are quasi $6$-symmetric spaces.
\end{Corollary}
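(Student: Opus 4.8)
The plan is to reduce the Corollary to a single verification. By the Theorem just proved, each $FL_j$ $(j=1,2,3)$ is, as a homogeneous space, equal to $G/K$ with $G = \SU$ and $K = {\rm SO}_{1,1} = \{\di(a, a^{-1}, 1)\mid a\in\R^\times\}$; moreover $G = \textrm{Fix}(\hat\tau)$ for the real form involution $\hat\tau$ of \eqref{eq:tau}, and $G^{\C} = \SL$. Hence it suffices to check the two conditions of Definition \ref{def:k-symmetric} for this single pair $(G,K)$ together with the order $6$ automorphism $\hat\sigma$ of \eqref{eq:sigma}. Condition $(1)$, the relation $\hat\sigma\hat\tau\hat\sigma = \hat\tau$, is exactly \eqref{eq:sigmahat} and is already available. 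So the entire content lies in condition $(2)$, the sandwich $\textrm{Fix}(\hat\sigma)^\circ \subset K^{\C} \subset \textrm{Fix}(\hat\sigma)$.

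First I would make the fixed point set of $\hat\sigma$ explicit. Writing $P_1 = \di(\epsilon^2, \epsilon^4, -1)\r{P}$ as in \eqref{eq:FL3}, the group formula \eqref{eq:sigma} gives $\hat\sigma(g) = P_1 (g^T)^{-1} P_1^{-1}$, so $g\in\textrm{Fix}(\hat\sigma)$ if and only if $g P_1 g^T = P_1$. I would then verify by direct substitution that the complexified stabilizer $K^{\C} = \{\di(a, a^{-1}, 1)\mid a\in\C^\times\}$ satisfies this relation: for such diagonal $g$ one has $g^T = g$, and conjugating $g^{-1}$ by $P_1$ merely interchanges the first two diagonal entries (the prefactor $\di(\epsilon^2,\epsilon^4,-1)$ acting trivially on diagonal matrices and $\r{P}$ swapping the first two coordinates), which returns $g$. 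This establishes the right-hand inclusion $K^{\C} \subset \textrm{Fix}(\hat\sigma)$.

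For the left-hand inclusion I would pass to Lie algebras. Differentiating, the $+1$-eigenspace $\mathfrak g_0 = \{X\in\sl \mid \hat\sigma X = X\}$ of the induced order $6$ automorphism $\hat\sigma(X) = -P_1 X^T P_1^{-1}$ is the Lie algebra of $\textrm{Fix}(\hat\sigma)$, and the same swap computation shows $\di(b,-b,0)\in\mathfrak g_0$, i.e. $\Lie(K^{\C})\subset\mathfrak g_0$. The key point is the reverse: $\mathfrak g_0$ is exactly the one-dimensional space $\{\di(b,-b,0)\mid b\in\C\}$. This is precisely the $\lambda^0$-component appearing in the Maurer-Cartan form \eqref{eq:UVlambda}, the eigenspace grading of $\hat\sigma$ matching the powers of $\lambda$, and it also follows from the $A_2^{(2)}$ structure cited earlier, since for the order $6$ automorphism of $\sl$ of this type the fixed subalgebra is one-dimensional. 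Granting $\dim_{\C}\mathfrak g_0 = 1$, the two groups $K^{\C}$ and $\textrm{Fix}(\hat\sigma)$ share the same Lie algebra, so $\textrm{Fix}(\hat\sigma)^\circ = (K^{\C})^\circ = K^{\C}$, the last equality because $K^{\C}\cong\C^\times$ is connected. Combined with the previous step this gives $\textrm{Fix}(\hat\sigma)^\circ \subset K^{\C} \subset \textrm{Fix}(\hat\sigma)$, completing condition $(2)$.

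The main obstacle is the dimension count $\dim_{\C}\mathfrak g_0 = 1$: everything else is a short diagonal computation, whereas the vanishing of all other fixed vectors requires either the explicit eigenspace decomposition of $\hat\sigma$ on all nine matrix entries, or an appeal to the classification of the order $6$ automorphism of type $A_2^{(2)}$. I would carry this out by computing $\hat\sigma X = -P_1 X^T P_1^{-1}$ on a general $X = (x_{ij})$ and reading off that the only traceless solution of $\hat\sigma X = X$ is the claimed diagonal one.
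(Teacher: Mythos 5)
Your proof is correct and takes essentially the same approach as the paper: condition $(1)$ of Definition \ref{def:k-symmetric} is exactly the relation $\hat\sigma\hat\tau\hat\sigma=\hat\tau$ of \eqref{eq:sigmahat}, and condition $(2)$ comes down to identifying $\textrm{Fix}(\hat\sigma)$ in $\SL$ with ${\rm SO}_{1,1}^{\C}$, which the paper likewise settles by the same direct computation (and whose infinitesimal version, $\dim_{\C}\mathfrak g_0^{\C}=1$, is confirmed by the eigenspace table in Section~\ref{subsc:Cha}). The only cosmetic difference is that the paper verifies the conditions for $FL_3$ and then transfers them to $FL_1$ and $FL_2$ via equivariant diffeomorphisms, whereas you invoke the common presentation $FL_j=\SU/{\rm SO}_{1,1}$ from the preceding theorem; both reductions are legitimate.
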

\begin{proof}
 First we note that the group $G = \SU$ has the
complexification $G^\C = \rm{SL}_3 \C$ and is the fixed point group of the real form involution 
$\hat{\tau}$ given in \eqref{eq:sita2}.

 We show that $FL_3$ is a quasi $6$-symmetric space. 
 First note that the stabilizer 
  \[
 \textrm{Stab}_{P_1} = \{ X \in \SU \;|\; X P_1 \; X^T = P_1\}. 
 \]
 at the point $P_1=\di (\epsilon^2, \epsilon^4, -1) \r{P}$ of $FL_3$
 is ${\rm SO}_{1, 1}$.
It is easy to verify that the order $6$-automorphism $\hat \sigma$ of $\rm{SL}_3 \C$ given
 in \eqref{eq:sigma} and the real form involution $\hat \tau$
 in \eqref{eq:tau} satisfy the condition $(1)$ in Definition  \ref{def:k-symmetric}. 
 Moreover,  a direct computation shows that the fixed point of 
 $\hat \sigma$ in $\rm{SL}_3 \C$ is ${\rm SO}_{1, 1}^\C$. 
 Thus $\textrm{Stab}_{P_1}$ satisfies 
 the condition $(2)$ in Definition  \ref{def:k-symmetric}. 
 
 Thus   $FL_3$ is quasi $6$-symmetric space  in the sense of Definition 
 \ref{def:k-symmetric}. 
 Furthermore, for any pair of homogeneous spaces 
 $FL_j$ and $FL_m$ 
 there exists a diffeomorphism 
 \[
  \phi_{jm}: FL_m \rightarrow FL_j
 \] 
 and a homomorphism
 $\chi_{jm} : 
 \SU \to \SU$
 such that for any $g \in \SU$ and $p\in FL_m$  we have 
\[
 \phi_{jm} (g.p) = \chi_{jm}(g).\phi_{jm} (p),
\]
 As a consequence, also $FL_1$ and $FL_2$ are  $6$-symmetric spaces.
\end{proof}
%%%%%%%%%%%%%%%%%%%%%%%%

\subsection{Normalized Gauss maps of timelike Lagrangian surfaces in $\CH$}
 We now define three Gauss maps for a timelike Lagrangian surface $f$ in $\CH$. 
 Let us  assume that $f$ is defined on a simply connected domain $M$ and that
 $\f$ is a horizontal lift of $f$. Then we define the coordinate frame 
 $\mathcal F: M \to  \U$ as in \eqref{eq:coordinateframe}. 
Moreover, take the diagonal matrix $D$ as in \eqref{eq:D} and 
 consider the normalized coordinate frame $\mathcal F D$. If necessary 
 replacing $\f$ by $a \f$ with some constant $a \in S^1$, without loss
 of generality, we can assume 
\begin{equation}\label{eq:normalizedframe}
 \hat F= \mathcal F D : M \to \SU,
 \end{equation}
 and $\hat F$ will be called the \textit{normalized frame}.
 Note that 
\[
D = \di (\exp (-\mathcal L), \exp (-\mathcal L), 1)
\]
 where $\mathcal L$ is the integral of the mean curvature $1$-form $L$
 as in \eqref{eq:1form}. Thus $D$ is well-defined on $M$.
 Furthermore, the normalized frame is well-defined up to ${\rm SO}_{1, 1}$, that is, any normalized frame is of the form
\begin{equation}\label{Fk}
 \hat F k, \quad k \in {\rm SO}_{1, 1},
\end{equation}
 since by the freedom of the null coordinates $(u, v)$ by $(s(u), t(v))$, 
 where $s, t$ are positive functions of one variable each.

\begin{Definition}\label{dfn:normalizedGauss}
 Consider the projections  
 $\pi_j \circ \hat F : M \to FL_j (j =1, 2, 3)$, where $\pi_j: \SU \to FL_j$.
 Then 
 \[
  g_j = \pi_j \circ \hat F \quad (j =1, 2, 3)
 \]
 will be called the \textit{normalized Gauss maps} of $f$ (with values in $FL_j$). Note that by  \eqref{Fk} the maps $g_j$ are well-defined on $M$, that is,
  independent of coordinates.
\end{Definition}
 Our definitions were a priori not very geometric.
 But by following \cite{McIn} we find analogously
 three obvious geometric
 interpretations of the Gauss map.

 For $FL_1$: Let $g_1 : M \to FL_1$ be given by 
\[
 p \mapsto (\f(p), \>
 \operatorname{span}_{\R} \{ (e^{- \mathcal L} \f_{u})(p), 
 (e^{- \mathcal L} \f_{v})(p), \f(p) \}),
\]
 where  $\f$ is a  horizontal lift of $f$ such that $\det \mathcal F D 
 = 1$.

 For $FL_2$: Let $g_2: M \to FL_2$ be given  by 
\[
p \mapsto  
 (\f(p), 0 \subset \C \f(p) \subset  \C \f(p) \oplus \C \f_u(p) 
 \subset  \C \f(p) \oplus \C \f_u(p)  \oplus \C \f_{v} (p)).
\]
 On the other hand we can represent the Gauss map $g_3$ by using 
 the frame $\hat F$ defined in \eqref{eq:normalizedframe} as 
\[
 g_3 = \hat F  P_1 \>\hat F^T, \quad \mbox{with} \quad P_1=\di (\epsilon^2, \epsilon^4, -1) \r{P},
\]
 where $\epsilon = e^{\pi \sqrt{-1}/3}$ and $\r{P}$ is defined in \eqref{eq:P}.
\begin{Remark}
\mbox{}
\begin{enumerate}
 \item 
 From the above arguments, it is clear that 
 the normalized Gauss maps $g_j$ do not depend on 
 choices of the coordinates $(u, v)$, but only depend on the Lorentz structure of $M$.
 \item 
 It is known that the \textit{natural} 
 Gauss map $\tilde{g}_1 $ of a timelike Legendre immersion $\f: M \to H^5_3$ is
 given by the wedge product of $\f$
  with the tangential Gauss map 
 $\gamma : M \to \Gr_{\R} (2, \C^3_2)$, that is 
 $\tilde g_1 =(\f, \gamma\wedge \f)$, and that
 $\tilde g_1$ takes values in $FL_1$ if and only if $\f$ is 
 minimal  (see \cite{McIn} for example for the $\mathbb {CP}^2$ 
 case). Our \textit{normalized} Gauss map $g_1$ takes values 
 in $FL_1$ but it does not imply 
 minimality of a timelike Legendre immersion $\f$ since 
 we rotate the tangential Gauss map $\gamma$ by the 
 factor $e^{-\mathcal L} \in S^1$, and thus obtain $g_1 = (\f, (e^{-\mathcal L}\gamma)\wedge \f).$
\end{enumerate}

\end{Remark}
 Let $M$ be a two-dimensional Lorentz surface with null coordinates
 $(u, v) \in \mathbb D \subset M$, and let $G/K$ be a quasi
 $k$-symmetric space, $k>2$ and consider a smooth 
 map $g : M \to G/K$. Moreover, let $F: \mathbb D
 \to G$ be a frame of $g$ and $\alpha = F^{-1} \d F$ be the Maurer-Cartan
 form of $F$. According to the decomposition of $\mathfrak g = 
 \mathfrak k + \mathfrak p$ of $G/K$ where $\mathfrak k$ is 
 the Lie algebra of $K$, we have 
\[
 \alpha = \alpha_{\mathfrak k} + \alpha_{\mathfrak p}=  
 \alpha_{\mathfrak k} + \alpha_{\mathfrak p}^{u} +  
 \alpha_{\mathfrak p}^{v},
\]
 where the superscripts $u$ and $v$ denote the $u$- and $v$-parts, 
 respectively. Let us denote by $\hat \sigma$ also the differential of 
 the order $k$-automorphism, that is, 
 $\hat \sigma$ is the order $k$-automorphism of the Lie algebra 
 $\mathfrak 
 g^{\C}$ of $G^{\C}$. Then it is easy to see that $\hat \sigma$ has the eigenvalues 
 $\{\epsilon^0, \epsilon^1, \dots, \epsilon^{k-1}\}$ with $\epsilon = e^{2 \pi \sqrt{-1}/k}$
 and 
 the complexification $\mathfrak 
 g^{\C}$ can be decomposed into $k$-eigenspaces as
\[
 \mathfrak  g^{\C} =  \mathfrak  g^{\C}_0 + \mathfrak  g^{\C}_1+ \cdots +
 \mathfrak  g^{\C}_{k-1}.
\]
 Here $\mathfrak g_j^{\C} = 
 \{X \in \mathfrak g^{\C}  \;|\; \hat \sigma (X) =\epsilon^{j} X\}$.
 Note that $\mathfrak g_0^{\C} = \mathfrak k^{\C}$.
\begin{Definition}
 We retain the notation as above. 
 A smooth map $g: M \to G/K$ is called a \textit{Lorentz primitive 
 harmonic map} if the following conditions hold$:$
\begin{equation}\label{eq:Lprimitive}
 \mbox{$\alpha_{\mathfrak p}^{u}$ and $\alpha_{\mathfrak p}^{v}$
   take values in $\mathfrak  g^{\C}_{k-1}$ and $\mathfrak  g^{\C}_{1}$, 
 respectively}.
\end{equation}
\end{Definition}

%%%%%%%%%%%%%%%%%%%%%

\subsection{Characterization of the minimality in terms of the normalized 
 Gauss maps}\label{subsc:Cha}
 It is easy to compute the Maurer-Cartan form of the normalized frame $\hat F$
 as in \eqref{eq:normalizedframe}, see \eqref{eq:FDMaurer}:
\[
 \hat \alpha = \hat F^{-1} \d \hat F = \hat U \d u + \hat V \d v,
\]
 with
\begin{equation}\label{eq:hatUV}
\hat U =
 \begin{pmatrix} 
\frac{\omega_u}{2} & m &
 e^{\omega/2+ \mathcal L}\\
- Qe^{-\omega} & - \frac{\omega_u}{2} & 0\\
0 & e^{\omega/2-\mathcal L}& 0
\end{pmatrix},   \quad
\hat V =
 \begin{pmatrix} 
- \frac{\omega_v}{2}
 & - R e^{-\omega}  & 0 \\
 \ell &  \frac{\omega_v}{2} & e^{\omega/2+\mathcal L}\\
e^{\omega/2- \mathcal L}& 0 & 0
\end{pmatrix}. 
\end{equation}
 From Section \ref{subsc:flatconnections}, is natural to introduce the spectral parameter 
 into the Maurer-Cartan form as follows:
\begin{equation}\label{eq:hatUVlambda}
 \hat \alpha^{\l} = \alpha^{\lambda}
\end{equation}
 where $\alpha^{\lambda}$ is defined in \eqref{eq:alphalambda-orig}.

 On the other hand a straightforward computation shows that the eigenspaces $\mathfrak g_j^{\C} \subset 
 \sl$
 of the order $6$-automorphism $\hat \sigma$ in \eqref{eq:sita} are
\begin{align*}
 \mathfrak g_0^{\C} &= \left\{ \begin{pmatrix} a_{11} & 0 & 0 \\ 0 & - a_{11} &0  \\ 0 & 0 & 0 \end{pmatrix}\;\Bigg|\; a_{11} \in \C \right\}, \quad 
& \mathfrak g_1^{\C} &= \left\{ \begin{pmatrix} 0 & a_{12} & 0 \\ 0  & 0  &a_{23}  \\ a_{23}
 & 0 & 0 \end{pmatrix}\;\Bigg|\; a_{12}, a_{23} \in  \C \right\},
  \\
 \mathfrak g_2^{\C} &=  \left\{ \begin{pmatrix} 0 & 0 & a_{13} \\ 0 & 0 &0  \\ 0 & -
 a_{13} & 0 \end{pmatrix}\;\Bigg|\; a_{13} \in \C
 \right\},\quad 
 & \mathfrak g_3^{\C} &= \left\{ \begin{pmatrix} a_{11} & 0 & 0 \\ 0 & a_{11} &0  \\ 0 & 0 & -2 a_{11} \end{pmatrix}\;\Bigg|\; a_{11} \in \C \right\}, \\
 \mathfrak g_4^{\C} &= \left\{ \begin{pmatrix} 0 & 0 & 0 \\ 0 & 0 & a_{23}  \\ -a_{23} & 0 & 0 \end{pmatrix}\;\Bigg|\; a_{23} \in \C  \right\},
 \quad 
 &\mathfrak g_5^{\C} &= 
\left\{\begin{pmatrix} 0  & 0  & a_{13} \\ a_{21} & 0 & 0 \\ 
 0 & a_{13} & 0 \end{pmatrix}\;\Bigg|\; a_{21}, a_{13} \in \C \right\}.
\end{align*}
 We now have the main theorem of this paper.
\begin{Theorem}\label{thm:FinallCharact}
 Let $f: M \to \CH$ be a timelike Lagrangian surface in $\CH$.
 Then the following statements are equivalent$:$
\begin{enumerate}
\item $f$ is minimal.
\item The mean curvature $1$-form $L = \ell \, \d u + m \, \d v$ vanishes.
\item $\d + \hat \alpha^{\lambda}$ gives a family of flat connections
 on $\D \times \SU$.
\item The normalized Gauss maps $g_j\;(j =1, 2, 3)$ are respectively 
 Lorentz primitive harmonic maps  into the quasi $6$-symmetric spaces
 $FL_j\;(j =1, 2, 3)$.
\end{enumerate}
\end{Theorem}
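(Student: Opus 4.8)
The plan is to prove the chain of equivalences $(1) \Leftrightarrow (2) \Leftrightarrow (3) \Leftrightarrow (4)$ by leveraging the work already done in Theorem \ref{thm:Charact}. The equivalences $(1) \Leftrightarrow (2) \Leftrightarrow (3)$ are essentially contained in Theorem \ref{thm:Charact}: indeed $(1) \Leftrightarrow (2)$ is Proposition \ref{prp:minicharact}, and the flatness condition $(3)$ for $\d + \hat \alpha^\lambda$ is, in view of \eqref{eq:hatUVlambda}, the same computation as statement $(3)$ of Theorem \ref{thm:Charact} — the family $\hat \alpha^\lambda$ coincides with $\alpha^\lambda$ of \eqref{eq:alphalambda-orig}. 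Thus the only genuinely new content is the equivalence of $(4)$ with the others, and the main work is to show that \emph{Lorentz primitive harmonicity of the Gauss maps $g_j$} is equivalent to the vanishing of $m$ and $\ell$.

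First I would reduce all three Gauss maps to a single computation. Since the diffeomorphisms $\phi_{jm}$ and homomorphisms $\chi_{jm}$ established in the proof that the $FL_j$ are quasi $6$-symmetric spaces intertwine the $\SU$-actions, Lorentz primitive harmonicity is preserved under these identifications; hence it suffices to treat one of them, and $g_3 = \hat F P_1 \hat F^T$ is the most convenient since its frame is literally $\hat F$. The frame of $g_3$ is $\hat F$ with Maurer-Cartan form $\hat \alpha = \hat U \, \d u + \hat V \, \d v$ given explicitly in \eqref{eq:hatUV}. The central step is then to decompose $\hat \alpha$ according to the eigenspace decomposition $\sl = \bigoplus_{j=0}^{5} \mathfrak g_j^{\C}$ of $\hat \sigma$ recorded just above, and to check directly against the eigenspace tables which entries of $\hat U$ and $\hat V$ land in $\mathfrak g_1^{\C}$ versus $\mathfrak g_5^{\C}$.

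Carrying this out: inspecting \eqref{eq:hatUV} against the displayed bases, the diagonal $\pm \omega_u/2, \pm \omega_v/2$ entries lie in $\mathfrak g_0^{\C} = \mathfrak k^{\C}$, so $\hat \alpha_{\mathfrak k}$ collects these; the off-diagonal entries split among $\mathfrak g_1^{\C}, \mathfrak g_2^{\C}, \mathfrak g_4^{\C}, \mathfrak g_5^{\C}$. The Lorentz primitive condition \eqref{eq:Lprimitive} demands that $\hat \alpha_{\mathfrak p}^u \in \mathfrak g_5^{\C}$ and $\hat \alpha_{\mathfrak p}^v \in \mathfrak g_1^{\C}$. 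Reading off the entries, the term $m$ appearing in the $(1,2)$-slot of $\hat U$ and the term $\ell$ in the $(2,1)$-slot of $\hat V$ are precisely the components that would place $\hat \alpha_{\mathfrak p}^u$ and $\hat \alpha_{\mathfrak p}^v$ into eigenspaces \emph{other} than $\mathfrak g_5^{\C}$ and $\mathfrak g_1^{\C}$ respectively. Thus the primitivity condition holds if and only if $m = \ell = 0$, which is exactly statement $(2)$. This matches the structural remark that the $j$-th power of $\lambda$ in \eqref{eq:UVlambda0} corresponds to the $j$-th eigenspace of $\hat \sigma$, so the spectral parameter of \eqref{eq:hatUVlambda} is simply bookkeeping for the eigenspace grading.

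The step I expect to be the main obstacle is verifying cleanly that the eigenspace grading of $\hat \sigma$ aligns with the $\lambda$-grading in the correct degrees — specifically that the $u$-part sits in degree $5 \equiv -1$ and the $v$-part in degree $1$, rather than the reverse — and confirming that the residual entries ($-Qe^{-\omega}$ in $\mathfrak g_5^{\C}$, $-Re^{-\omega}$ in $\mathfrak g_1^{\C}$, and the $e^{\omega/2 \pm \mathcal L}$ entries) already respect \eqref{eq:Lprimitive} without imposing further constraints. This requires carefully matching the sign and position conventions of the eigenvector tables for $\mathfrak g_1^{\C}$ and $\mathfrak g_5^{\C}$ against the explicit matrices $\hat U, \hat V$, since a single transposed or misplaced entry would spoil the correspondence. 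Once this alignment is confirmed, the equivalence $(4) \Leftrightarrow (2)$ follows, and combined with Theorem \ref{thm:Charact} this closes the full cycle of equivalences and establishes the Ruh-Vilms type characterization.
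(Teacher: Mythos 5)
Your proposal is correct and follows essentially the same route as the paper: the equivalences $(1)\Leftrightarrow(2)\Leftrightarrow(3)$ are quoted from Theorem \ref{thm:Charact}, and $(4)$ is handled by noting that all three Gauss maps share the frame $\hat F$, decomposing $\hat\alpha$ into the eigenspaces of $\hat\sigma$, and observing that $\hat\alpha_{\mathfrak p}^u\in\mathfrak g_5^{\C}$ and $\hat\alpha_{\mathfrak p}^v\in\mathfrak g_1^{\C}$ precisely when $m=\ell=0$. The alignment issues you flag as the main obstacle are exactly the checks the paper performs (implicitly) against its eigenspace tables, so nothing is missing.
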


\begin{proof}
 The equivalences of $(1)$, $(2)$ and $(3)$ follow  from Theorem \ref{thm:Charact}.
 We now show  the equivalence of $(4)$ and $(1)$. First note that 
 the normalized frame 
 $\hat F$ is common for all the normalized Gauss maps $g_1, g_2$ and $g_3$.
 
 From the eigenspace decomposition of $\hat \sigma$,
 $\alpha_{\mathfrak p}^u$ and $\alpha_{\mathfrak p}^v$ can be 
 computed as 
\[
 \alpha_{\mathfrak p}^u = 
 \begin{pmatrix} 
0 & m & e^{\omega/2+\mathcal L}\\
-Qe^{-\omega} & 0 & 0\\
0 & e^{\omega/2-\mathcal L}&0
\end{pmatrix}\, \d u ,   \quad 
 \alpha_{\mathfrak p}^v = 
 \begin{pmatrix} 
 0 & - R e^{-\omega}  & 0 \\
 \ell & 0 & e^{\omega/2+ \mathcal L}\\
e^{\omega/2-\mathcal L}& 0 & 0
\end{pmatrix}\, \d v. 
\]
 Thus it is easy to see that $ \alpha_{\mathfrak p}^u$ and
 $ \alpha_{\mathfrak p}^v$ respectively take values in 
  $\mathfrak g_5^{\C}$ and $\mathfrak g_1^{\C}$ if and only if 
 $L = m \, \d u + \ell\, \d v = 0$.
 Therefore by \eqref{eq:Lprimitive}, $g_j (j =1, 2, 3)$ is a 
 Lorentz primitive harmonic map into $FL_j (j =1, 2, 3)$
 if and only if $f$ is minimal.
\end{proof}

\textbf{Acknowledgement:} We would like to thank Hui Ma for pointing 
 out to us  (5) in Remark \ref{rm:minimal}. We would also
 like to thank the referee for carefully reading the manuscript and for pointing out to 
 us a number of typographical errors in the manuscript.
%%%%%%%%%%%%%%%%%%%%%%

\end{document}